\documentclass[12pt,reqno]{amsart} 

\usepackage{mathrsfs,amsfonts,amsthm,amsmath,amssymb} 
\usepackage[dvipsnames]{xcolor} 

\usepackage{enumitem}                
\usepackage{lineno}                 

\usepackage[numbers,sort&compress]{natbib}  

\usepackage{graphics}
\usepackage{graphicx}   
\graphicspath{{images/}} 
\usepackage{subfig}     
\usepackage{rotating}

\usepackage[hyperindex, pdfencoding = auto, psdextra, bookmarksdepth = 4]{hyperref}  
\hypersetup{
    bookmarksopen      = true,
    bookmarksopenlevel = 1,
    bookmarksnumbered  = true,
    pdftitle    = {},
    pdfauthor   = {},
    linktoc     = page,
    anchorcolor = green,
    colorlinks  = true,        
    linkcolor   = black,
    citecolor   = blue,
    filecolor   = blue,
    urlcolor    = magenta,
}

\usepackage[capitalise, noabbrev, nameinlink]{cleveref}    

\theoremstyle{plain}

\newtheorem{theorem}{Theorem}[section]
\newtheorem{lemma}[theorem]{Lemma}

\newtheorem{proposition}[theorem]{Proposition}
\newtheorem{corollary}[theorem]{Corollary}
\newtheorem{question}[theorem]{Question}

\theoremstyle{definition}   
\newtheorem{definition}[theorem]{Definition}
\newtheorem{remark}[theorem]{Remark}
\newtheorem{example}[theorem]{Example}

\makeatletter
    \def\subsection{\@startsection{subsection}{2}%
    \z@{.5\linespacing\@plus.7\linespacing}{.3\linespacing}%
    {\normalfont\bfseries}}
\makeatother 

\makeatletter                                         
    \newcommand{\LeftEqNo}{\let\veqno\@@leqno}        
\makeatother

\numberwithin{equation}{section}                      

\begin{document}

\

\vspace{-2cm}

\title[The similarity of irreducible operators in factors]{The similarity of irreducible operators in factors}

\author{Minghui Ma}
\address{Minghui Ma, School of Mathematical Sciences, Dalian University of Technology, Dalian, 116024, China}
\email{minghuima@dlut.edu.cn}


\author{Rui Shi}
\address{Rui Shi, School of Mathematical Sciences, Dalian University of Technology, Dalian, 116024, China}
\email{ruishi@dlut.edu.cn}
\thanks{Minghui Ma was supported by the China Postdoctoral Science Foundation (No.2025M783077) and the Postdoctoral Fellowship Program of CPSF (No.GZC20252022). Rui Shi was supported by NSFC (No.12271074). 
}

\author{Shanshan Yang}
\address{Shanshan Yang, School of Mathematical Sciences, Dalian University of Technology, Dalian, 116024, China}
\email{yss@mail.dlut.edu.cn}

\keywords{Irreducible operator, normal operator, single generator, factor, maximal abelian self-adjoint subalgebra}
\subjclass[2020]{Primary 46L10; Secondary  47C15}

\begin{abstract}
    An operator $T$ in a separable factor $\mathcal{M}$ is said to be irreducible in $\mathcal{M}$ if the von Neumann subalgebra $W^*(T)$ generated by $T$ is an irreducible subfactor of $\mathcal{M}$, i.e., $W^*(T)'\cap\mathcal{M}=\mathbb{C}I$.
    We say that $T$ is a single generator of $\mathcal{M}$ if $W^*(T)=\mathcal{M}$.
    In this paper, we study generators of separable factors related to maximal abelian
    self-adjoint subalgebras.
    As an application, we obtain a complete characterization of normal operators in separable factors which are similar to irreducible operators.
\end{abstract}

\maketitle

\section{Introduction}

Let $\mathcal{H}$ be a separable complex Hilbert space and $\mathcal{B}(\mathcal{H})$ the algebra of all bounded linear operators on $\mathcal{H}$.
An operator $T$ in $\mathcal{B}(\mathcal{H})$ is said to be {\em irreducible} if there is no nontrivial projection commuting with $T$.
Otherwise, $T$ is called {\em reducible}.
In 1968, P. Halmos \cite{Hal68} introduced the concept of irreducibility of operators and proved that the set of irreducible operators is a norm-dense $G_\delta$ subset of $\mathcal{B}(\mathcal{H})$.
The irreducibility of operators is clearly a unitary invariant, and it is natural to seek necessary and sufficient conditions for an operator to be similar to an irreducible operator.
Along this line, F. Gilfeather \cite[Theorem 2]{Gil72} proved that, on a separable Hilbert space, every normal operator with empty point spectrum is similar to an irreducible operator.
Subsequent to Gilfeather's result, C. Fong and C. Jiang \cite[Main theorem]{FJ94} gave a complete characterization of normal operators in $\mathcal{B}(\mathcal{H})$ that are similar to irreducible operators.
The similarity of irreducible operators in matrix algebras was completely characterized by C. Hsin \cite{Hsin00}.
The reader is referred to \cite[Section 2.2]{JW98-book} for more results on the similarity of irreducible operators.
Noting that $\mathcal{B}(\mathcal{H})$ is a type $\mathrm{I}$ factor, and some related operator-theoretic results \cite{CFY21,CFY24,DK21,DSSW,DSZ15,HS09,HMS26,MSSW,Shen09,SS19} have been established in factors recently, it is natural to investigate the similarity of irreducible operators in factors.
As the proof of \cite[Main theorem]{FJ94} makes essential use of specific features of $\mathcal{B}(\mathcal{H})$, obtaining an analog in general von Neumann algebras requires the development of new methods.

Let us recall some basic terminology from the theory of von Neumann algebras.
A self-adjoint subalgebra of $\mathcal{B}(\mathcal{H})$ is said to be a {\em von Neumann algebra} if it is closed in the weak-operator topology and contains $I$, the identity operator of $\mathcal{B}(\mathcal{H})$.
A \emph{factor} is a von Neumann algebra with trivial center.
By the type decomposition theorem \cite[Theorem 6.5.2]{KR2}, factors are classified into type $\mathrm{I}_n$, type $\mathrm{I}_\infty$, type $\mathrm{II}_1$, type $\mathrm{II}_\infty$, and type $\mathrm{III}$.
Let $\mathcal{M}$ be a separable von Neumann algebra, i.e., it has a separable predual.
For any subset $\mathcal{S}$ of $\mathcal{M}$, we denote by $W^*(\mathcal{S})$ the smallest von Neumann subalgebra containing $\mathcal{S}$.
An operator $T$ in $\mathcal{M}$ is said to be a \emph{single generator} of $\mathcal{M}$ if $\mathcal{M}=W^*(T)$.
The generator problem, raised by R. Kadison in the 1960s, asks whether each separable von Neumann algebra is singly generated \cite{Ge03}.
Although significant progress has been made over several decades, the problem remains open for type $\mathrm{II}_1$ factors.
Note that the answer is affirmative for separable von Neumann algebras without a direct summand of type $\mathrm{II}_1$ (see \cite{SS63,Wog69}).
To provide a unified approach to the generator problem for type $\mathrm{II}_1$ factors, J. Shen \cite{Shen09} introduced his invariant $\mathcal{G}$ to measure the number of generators and proved that $\mathcal{G}(\mathcal{M})=0$ for type $\mathrm{II}_1$ factors with Cartan subalgebras, those with property $\Gamma$, nonprime factors, or some type $\mathrm{II}_1$ factors with property $(T)$.
Later, this invariant $\mathcal{G}$ was further studied in \cite{DSSW}.
In particular, it follows from \cite[Theorem 3.1]{DSSW} that $\mathcal{M}$ is singly generated whenever $\mathcal{G}(\mathcal{M})<\frac{1}{2}$.
The reader is referred to \cite[Chapter 16]{SS-book} for more details on the generator problem.
To establish the main result \Cref{thm similarity} of this paper, we require the following theorem concerning maximal abelian self-adjoint subalgebras, which is of independent interest.

\begin{theorem}\label{thm masa}
    Let $\mathcal{M}$ be a separable factor and $\mathcal{A}$ a maximal abelian self-adjoint subalgebra of $\mathcal{M}$.
    Suppose that one of the following conditions holds:
    \begin{enumerate} [label= $(\arabic*)$, ref= \arabic*, leftmargin=*] 
        \item[$(1)$] $\mathcal{M}$ is of type $\mathrm{I}_n$ or type $\mathrm{III}$;
        \item[$(2)$] $\mathcal{M}$ is of type $\mathrm{II}_1$, and $\mathcal{G}(\mathcal{M})=0$;
        \item[$(3)$] $\mathcal{M}$ is of type $\mathrm{I}_\infty$ or type $\mathrm{II}_\infty$, and
        \begin{equation*}
            I=\bigvee\{Q\in\mathcal{A}\colon Q~\text{is a finite projection in}~\mathcal{M}\}.
        \end{equation*}
    \end{enumerate}
    Then for every nonzero projection $P$ in $\mathcal{A}$ with $P\precsim I-P$ in $\mathcal{M}$, there exists a unitary operator $U$ in $\mathcal{M}$ such that $\mathcal{M}=W^*(U^*PU,\mathcal{A}(I-P))$.
\end{theorem}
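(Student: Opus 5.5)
The plan is to turn the problem into one about matrix units. Set $Q=I-P$. Since $P\precsim Q$, there is a partial isometry $V\in\mathcal{M}$ with $V^*V=P$ and $VV^*=Q_0\le Q$.

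\textbf{Step 1: reduce to generating $P\mathcal{M}P$.} Suppose we have a unitary $U$ with $U^*PU=P'$, where $P'$ is a projection in ``general position'' relative to $\mathcal{A}Q$. Since $\mathcal{A}Q$ is a masa in $Q\mathcal{M}Q$, the algebra $W^*(P',\mathcal{A}Q)$ contains $Q$. The key observation is then the following. Assume $\mathcal{N}\subseteq\mathcal{M}$ is a von Neumann subalgebra containing $\mathcal{A}Q$, containing $Q$, and containing a partial isometry $W$ with $W^*W=P$ and $WW^*\le Q$. Assume further that $\mathcal{N}$ contains enough of $P\mathcal{M}P$ together with partial isometries linking $Q$ to $P$. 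Then $\mathcal{N}=\mathcal{M}$, because $\mathcal{M}$ is generated by $P\mathcal{M}P$, $Q\mathcal{M}Q$ and one partial isometry connecting $P$ and $Q$, with the connecting partial isometries covering all of $Q$.

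So I would instead choose $P'$ of the form
\[
P'=\tfrac12\bigl(P+Q_0+V+V^*\bigr)\quad\text{twisted by unitaries,}
\]
i.e.\ a ``graph'' projection $P'=\begin{pmatrix} A & B\\ B^* & C\end{pmatrix}$ relative to $P\oplus Q$. Such a $P'$ is unitarily equivalent to $P$ whenever $P'\sim P$ and $I-P'\sim Q$. In a factor, that equivalence is arranged by comparison of projections; in the infinite case it is automatic when both are properly infinite, and the finite case needs a trace count.

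\textbf{Step 2: compressions.} From $P'$ and $Q$ we recover $QP'Q$, $QP'P$ and $PP'P$ (using $P=I-Q$). I would choose the entries so that the following hold.
\begin{enumerate}
\item[(a)] $QP'Q$ together with $\mathcal{A}Q$ generates $Q\mathcal{M}Q$. This is a generator statement for the corner $Q\mathcal{M}Q$ relative to its masa $\mathcal{A}Q$: a masa together with one extra self-adjoint operator generates.
\item[(b)] The polar part of $QP'P$ is a partial isometry from $P$ onto a subprojection of $Q$, which transports $P\mathcal{M}P$ into $Q\mathcal{M}Q$. This gives all of $\mathcal{M}$.
\end{enumerate}
For (a) I would use the hypotheses case by case.
\begin{itemize}
\item Type $\mathrm{I}_n$ and type $\mathrm{III}$: any masa plus one self-adjoint element generates (Wogen / Saitô-type arguments).
\item Type $\mathrm{II}_1$ with $\mathcal{G}(\mathcal{M})=0$: Shen's invariant is compatible with corners, since $\mathcal{G}(Q\mathcal{M}Q)$ scales and vanishes. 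Combined with the $\mathcal{A}$-version of the generator argument, $\mathcal{A}Q$ plus a small self-adjoint operator generates $Q\mathcal{M}Q$ (as in \cite{DSSW}).
\item Type $\mathrm{I}_\infty$ and type $\mathrm{II}_\infty$: the hypothesis lets us write $Q=\sum Q_k$ with $Q_k\in\mathcal{A}$ finite. We handle each finite corner $Q_k\mathcal{M}Q_k$ (type $\mathrm{I}_n$, or $\mathrm{II}_1$ with $\mathcal{G}=0$ since it is an amplification-compatible case) and glue them with off-diagonal partial isometries with summable norms.
\end{itemize}
In every case the off-diagonal entries are taken with distinct scalar weights, so that spectral projections of $QP'Q$ separate the pieces.

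\textbf{Main obstacle.} The hard step is making a single projection $P'$ carry all the generating data simultaneously while remaining unitarily equivalent to $P$ in $\mathcal{M}$. I would realize $P'$ as $U^*PU$ with $U=\exp(iH)$, where $H=\epsilon(X+X^*)$ and $X=QXP$ encodes the generators. Then $U^*PU$ has compressions that are explicit functions of $X^*X$ and $XX^*$, by the standard two-projection computation. Generation would then follow by functional calculus, recovering $X$ from $QU^*PUP$ via polar decomposition. The delicate point is verifying that these functional-calculus recoveries stay inside $W^*(U^*PU,\mathcal{A}Q)$; this requires $\sin$ and $\cos$ of $|X|$ to be injective on the spectrum, which is ensured by taking $\epsilon$ small.
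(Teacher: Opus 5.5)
Your exponential device is sound, and it reduces the problem cleanly. Take $Q=I-P$, $X\in Q\mathcal{M}P$, $U=\exp(i\epsilon(X+X^*))$ and $\epsilon\|X\|<\pi/2$. Then $PP'P=\cos^2(\epsilon|X|)$ and $QP'P=-iXf(|X|)$ with $f(t)=\sin(\epsilon t)\cos(\epsilon t)/t>0$ on $[0,\|X\|]$. Hence $W^*(P',\mathcal{A}Q)=W^*(X,\mathcal{A}Q)$, and the unitary equivalence of $P'$ and $P$ is automatic. So the theorem amounts to finding $X\in Q\mathcal{M}P$ with $W^*(X,\mathcal{A}Q)=\mathcal{M}$.

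The gap is your choice of $X$. It puts the whole burden on step (a): that the masa $\mathcal{A}Q$ of $Q\mathcal{M}Q$, together with one positive operator whose range is equivalent to $P$, generates $Q\mathcal{M}Q$. That is a statement of exactly the same kind as the theorem (a prescribed masa plus one small element generates), now posed for a corner. Outside type $\mathrm{I}$, where a cyclic vector for the masa settles it, nothing you cite proves it:
\begin{itemize}
\item $\mathcal{G}(\mathcal{M})=0$ gives generating sets with small supports, not generation relative to a \emph{given} masa.
\item Wogen's theorem gives two generators of a properly infinite factor, not one element to adjoin to a prescribed masa.
\end{itemize}
In type $\mathrm{II}_\infty$ your argument is wrong as written. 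Hypothesis (3) says nothing about Shen's invariant, and by \eqref{equ SS-16.4.5} the finite corners $Q_k\mathcal{M}Q_k$ have $\mathcal{G}=0$ only if the underlying $\mathrm{II}_1$ factor already does. The ``gluing with summable norms'' into one self-adjoint recoverable from $QP'Q$ is also never carried out.

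The missing idea is to swap the roles of the two corners.
\begin{itemize}
\item Use $\mathcal{A}Q$ only to cut $Q$ into $N\geqslant 2$ nonzero pieces $P_j\in\mathcal{A}$ with $P_j\precsim P$. This is the only place the masa structure enters, via \eqref{equ masa-In}, \eqref{equ masa-II1}, and \Cref{lem masa-semifinite} under (3).
\item Put all generating data into $P\mathcal{M}P$, where there is no masa constraint. Choose positive $A_j\in P\mathcal{M}P$ with $\|A_j\|\leqslant 2^{-j}$, $W_0^*(\{A_j\}_j)=P\mathcal{M}P$ and $R(A_j)\sim P_j$, and partial isometries $E_{21}^{(j)}$ from $R(A_j)$ onto $P_j$.
\item In your notation, take $X=\sum_jE_{21}^{(j)}A_j$. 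Then $P_jX=E_{21}^{(j)}A_j$, and its polar decomposition returns $A_j$ together with a link from all of $P_j$ into $P$.
\item Since $\sum_jP_j=Q$, every block $P_j\mathcal{M}P_k=E_{21}^{(j)}\,R(A_j)\mathcal{M}R(A_k)\,E_{12}^{(k)}$ lies in the generated algebra.
\end{itemize}
The paper does the same thing in \Cref{lem key-construction}, using $V=(P-\sum_jA_j^2)^{1/2}+\sum_jA_jE_{12}^{(j)}$ and $P'=V^*V$, and checks the unitary equivalence by hand.

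The corner results then needed are unconstrained generator statements for $P\mathcal{M}P$:
\begin{itemize}
\item matrix bookkeeping in type $\mathrm{I}$;
\item three generating projections from $\mathcal{G}=0$ plus scaling in type $\mathrm{II}_1$ (\Cref{lem 1=t1+t2}, \Cref{lem sum=1});
\item Wogen's theorem, with a reduction to the finite case when some $P_j$ is finite, when $P$ is infinite;
\item \Cref{lem sum=infty} when $P$ is finite and $N=\infty$.
\end{itemize}
The last of these holds in every $\mathrm{II}_1$ factor, which is why type $\mathrm{II}_\infty$ needs no hypothesis on $\mathcal{G}$.
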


Let $\mathcal{M}$ be a separable factor and we further assume that $\mathcal{G}(\mathcal{M})=0$ if $\mathcal{M}$ is of type $\mathrm{II}_1$.
As an application of \Cref{thm masa}, we prove in \Cref{prop FT} that a self-adjoint operator in $\mathcal{M}$ is the real part of a generator if
and only if it is not a scalar multiple of the identity.
This result can be viewed as a generalization of a result of P. Fillmore and D. Topping \cite[Theorem 2]{FT69}.

Two projections $P$ and $Q$ in $\mathcal{M}$ are said to be {(\emph{Murray-von Neumann}) \emph{equivalent}}, denoted by $P\sim Q$, if $V^*V=P$ and $VV^*=Q$ for some partial isometry $V$ in $\mathcal{M}$.
We say that $P$ is \emph{weaker than $Q$}, denoted by $P\precsim Q$ or $Q\succsim P$, if $P\sim Q_0\leqslant Q$ for some projection $Q_0$ in $\mathcal{M}$.
We write $P\prec Q$ if $P$ is strictly weaker than $Q$, i.e., $P$ is weaker than $Q$ and is not equivalent to $Q$.
Let us briefly explain the condition $P\succsim I-P$ appearing in \Cref{thm similarity}, where $P$ is a projection in $\mathcal{M}$.
In the matrix algebra $M_n(\mathbb{C})$, the condition $P\succsim I-P$ is equivalent to $\operatorname{rank}(P)\geqslant\frac{n}{2}$.
Similarly, in a type $\mathrm{II}_1$ factor $(\mathcal{M},\tau)$ with a unique normal faithful tracial state $\tau$, the condition $P\succsim I-P$ is equivalent to $\tau(P)\geqslant\frac{1}{2}$.
While in properly infinite factors, i.e., factors of type $\mathrm{I}_\infty$, type $\mathrm{II}_\infty$, or type $\mathrm{III}$, the condition $P\succsim I-P$ states that $P$ is an infinite projection.
It is worth mentioning that every nonzero projection in type $\mathrm{III}$ factors is infinite.

Recall that an operator $T$ in a separable factor $\mathcal{M}$ is said to be \emph{irreducible} if the von Neumann algebra $W^*(T)$ generated by $T$ is an irreducible subfactor of $\mathcal{M}$, i.e., $W^*(T)'\cap\mathcal{M}=\mathbb{C}I$.
Otherwise, $T$ is called \emph{reducible} in $\mathcal{M}$.
By definition, every scalar in $\mathbb{C}$ is irreducible.
Moreover, it is straightforward to prove that an operator in $M_2(\mathbb{C})$ is similar to an irreducible operator if and only if it is not a scalar multiple of the identity.
We present the main result of this paper as follows, where by $R(T)$ we denote the range projection of $T$.

\begin{theorem}\label{thm similarity}
    Let $\mathcal{M}$ be a separable factor with $\dim\mathcal{M}>4$.
    Then a normal operator $N$ in $\mathcal{M}$ is similar to an irreducible operator if and only if the following two conditions are satisfied:
    \begin{enumerate}  [label= $(\arabic*)$, ref= \arabic*, leftmargin=*] 
        \item \label{item-a1} $R(\lambda I-N)\succsim I-R(\lambda I-N)$ for every scalar $\lambda$ in $\mathbb{C}$;
        \item \label{item-a2} the set $\{I,N,N^2\}$ is linearly independent over $\mathbb{C}$.
    \end{enumerate}
\end{theorem}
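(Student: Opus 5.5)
We prove the two implications separately. The necessity is elementary and uses only similarity invariants. The sufficiency is built on \Cref{thm masa}, applied to a maximal abelian self-adjoint subalgebra containing $N$.

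\emph{Necessity.} Suppose $T=SNS^{-1}$ is irreducible in $\mathcal{M}$. For condition (2), if $N^2=aN+bI$, then $T^2=aT+bI$. Then $T$ is either a scalar or has a polynomial of degree at most $2$ annihilating it. In the second case, if the roots are distinct, the idempotent $E=(T-\mu I)/(\lambda-\mu)$ lies in $W^*(T)$. Its range projection $R(E)$ is nontrivial (when $T$ is not scalar) and commutes with $W^*(T)$? This is not automatic. The right step is to use the standard fact that in a factor $\mathcal{M}\neq M_2(\mathbb{C})$, an operator with $p(T)=0$ and $\deg p\le 2$ is reducible: one builds a nontrivial projection commuting with $T$ from the polar decomposition of the nilpotent part, or from the angle operator between $\operatorname{ran}E$ and $\ker E$. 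This is where $\dim\mathcal{M}>4$ enters. For condition (1), fix $\lambda$ and let $K=\ker(\lambda I-N)$, the projection $I-R(\lambda I-N)$ (here $R(\lambda I - N)$ is normal, so its range and kernel complement each other). If $R(\lambda I-N)\prec I-R(\lambda I-N)$, then $\ker(\lambda I-T)=S K$ is "larger" than the closure of the range of $\lambda I-T$. A nonzero part of $\ker(\lambda-T)$ can then be chosen orthogonal to, or split off from, $\overline{\operatorname{ran}}(\lambda-T)+\ker(\lambda-T)^{\ldots}$. Concretely, the projection $Q$ onto $\ker(\lambda-T)\ominus(\ker(\lambda-T)\cap\ldots)$ should commute with $T$, and one needs comparison of projections in $\mathcal{M}$ to guarantee that it is nonzero and proper. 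I would model this on the $\mathcal{B}(\mathcal{H})$ argument of Fong--Jiang, replacing dimension counts by $\precsim$.

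\emph{Sufficiency.} Take a masa $\mathcal{A}\ni N$. Split into two cases: either $N$ has an eigenvalue $\lambda$ with large kernel, or it does not.

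\emph{Eigenvalue case.} Let $P=R(\lambda I-N)$. Then $P\succsim I-P$, and $N=\lambda(I-P)+N_0$ with $N_0=NP$. We want to work with $P'=I-P\precsim P$ as in \Cref{thm masa}, so that the compression $\mathcal{A}P$ of $N$ is "generic" while the eigen-part is small. Choose a projection in $\mathcal{A}$ and a unitary $U$ given by the theorem, so that $\mathcal{M}=W^*(U^*P'U,\mathcal{A}P)$. Then write $T=(I+X)N(I+X)^{-1}$ with $X$ nilpotent, e.g.\ $X=(I-P)YP$. Conjugation by $I+X$ gives $T=N+[X,N]$, whose off-diagonal corner $(I-P)(YN_0-\lambda Y)P$ encodes $Y$. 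Choosing $Y$ so that the off-diagonal part, together with the diagonal $N$, generates $U^*P'U$-type data, any projection commuting with $T$ commutes with $P$ (via spectral separation or the idempotent structure) and with $\mathcal{A}P$. It is then forced into $W^*(U^*P'U,\mathcal{A}P)'=\mathbb{C}I$.

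\emph{No dominant eigenvalue case.} We use condition (2), which guarantees at least three spectral pieces. Pick a spectral projection $P\in\mathcal{A}$ with $P\precsim I-P$, $P\neq0$, and such that $N$ restricted to $P$ and to $I-P$ have separated spectra (or at least $NP$ is not scalar). Then apply the same upper-triangular perturbation $T=N+(I-P)ZP$, where $Z$ is built from the unitary of \Cref{thm masa}. By Rosenblum, $T$ is similar to $N$ when the spectra are disjoint. An operator commuting with $T$ preserves the spectral (Riesz) idempotents, hence commutes with $P$-corners. This makes the commutant of $W^*(T)$ lie in $W^*(U^*PU,\mathcal{A}(I-P))'=\mathbb{C}I$.

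The main obstacle is this last step: showing that a projection commuting with $T$ commutes with $P$ and with $\mathcal{A}$ (the hypotheses of \Cref{thm masa} must be checked for each type, e.g.\ finiteness of projections in $\mathcal{A}$ for type $\mathrm{I}_\infty/\mathrm{II}_\infty$ by choosing $\mathcal{A}$ suitably), and handling the case where the spectra cannot be separated. In that case I would first perturb within the similarity class using Gilfeather-type arguments on the diffuse part.
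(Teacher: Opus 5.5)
\textbf{Sufficiency.} This half has a genuine gap, and the mechanism you propose cannot work as stated.

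First, commuting with $T$ never sees a masa containing $N$. If a projection $Q$ commutes with $T=N+(I-P)ZP$ and with $P$, then $QP$ commutes with $PTP=NP$, hence, by Fuglede, with $W^*(N)P$. It need not commute with $\mathcal{A}P$ for a masa $\mathcal{A}\ni N$, which is usually far larger. The only further constraint is the intertwining relation $Q(I-P)ZP=(I-P)ZPQ$. So the relative commutant of $T$ does not land in $W^*(U^*P'U,\mathcal{A}P)'$, and your sketch never says how $Z$ is to encode $U^*P'U$.

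Second, separated spectral pieces need not exist (you need them for Rosenblum and the Riesz idempotents). Worse, \Cref{thm masa} may be inapplicable to every masa containing $N$. Take $N=M_x$ on $L^2[0,1]$ in $\mathcal{B}(L^2[0,1])$. It satisfies (1) and (2), has no eigenvalues and connected spectrum, and the only masa containing it is $L^\infty[0,1]$. That masa has no nonzero finite projection, so hypothesis (3) of \Cref{thm masa} fails.

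Third, in type $\mathrm{II}_1$, \Cref{thm masa} needs $\mathcal{G}(\mathcal{M})=0$, which the statement does not assume. The fallback to ``Gilfeather-type arguments'' is not an argument in a general factor.

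\textbf{The missing idea.} You never need to build a similarity for $N$ itself. By \Cref{lem bicommutant}, strong reducibility passes from $N$ to every operator in $\{N\}^{cc}$, and $W^*(N)\subseteq\{N\}^{cc}$ by Fuglede. So it suffices to find one operator in $W^*(N)$ that is similar to an irreducible operator.

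The paper does this as follows.
\begin{enumerate}[label=(\roman*)]
\item Using (1) and (2), \Cref{lem P1-P2-P3} gives nonzero $E_1,E_2,E_3\in W^*(N)$ with $\sum E_j=I$ and $E_j\precsim I-E_j$. Order them so that $E_3\precsim E_2\precsim E_1\precsim E_2+E_3$.
\item Then $\sum\alpha_jE_j$ is similar to $T=\sum\alpha_jE_j+AV_{12}+BV_{13}$ (\Cref{prop upper-triangular}, \Cref{cor upper-triangular}).
\item Distinct $\alpha_j$ force any projection $Q$ commuting with $T$ to commute with each $E_j$. The relation $QE_1AV_{12}=AV_{12}QE_2$ then forces $QE_1$ to commute with $A$, and likewise with $B$.
\item So irreducibility reduces to $W_0^*(A,B)$ being an irreducible subfactor of the largest corner $E_1\mathcal{M}E_1$.
\end{enumerate}
Only at this last step is \Cref{thm masa} used. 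It is applied to a masa of $E_1\mathcal{M}E_1$ chosen freely and unrelated to $N$. In the semifinite case one picks a masa containing enough finite projections. In the type $\mathrm{II}_1$ case one picks a masa of an irreducible hyperfinite subfactor $\mathcal{R}\subseteq E_1\mathcal{M}E_1$ (Popa), where $\mathcal{G}(\mathcal{R})=0$.

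\textbf{Necessity.} This half is fine in outline, and your treatment of (2) is the paper's argument in \Cref{lem main-pre}. For (1), write $T=XNX^{-1}$, $D=\lambda I-T$, $P_1=R(D)$, $P_2=R(D^*)$. The projection you want is $P_1\vee P_2$, which reduces $D$. Kaplansky's formula gives
\[
P_1\vee P_2-P_2\sim P_1-P_1\wedge P_2\leqslant P_1\sim R(\lambda I-N)\prec I-R(\lambda I-N)\sim I-P_2,
\]
where the last equivalence holds because $I-P_2$ projects onto $X\ker(\lambda I-N)$. Hence $P_1\vee P_2\neq I$.
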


In 1954, N. Dunford \cite{Dun54} introduced the concept of spectral operators on a Banach space and proved that every spectral operator on a Hilbert space is similar to the sum of a normal operator and a commuting quasinilpotent operator.
By replacing $\mathcal{B}(\mathcal{H})$ with a factor $\mathcal{M}$, K. Dykema and A. Krishnaswamy-Usha \cite[Section 3]{DK21} showed that if $T$ is a spectral operator in $\mathcal{M}$, then there exists an invertible operator $X$ in $\mathcal{M}$ such that $XTX^{-1}$ is the sum of a normal operator and a commuting quasinilpotent operator.
Combining this with \Cref{thm similarity}, we obtain \Cref{cor spectral-operator} for spectral operators.
In addition, we prove in \Cref{prop commutant-abelian} that if the relative commutant $\{T\}'\cap\mathcal{M}$ is abelian, then $T$ is similar to an irreducible operator in $\mathcal{M}$.
We conclude the introduction with the following question.

\begin{question}\label{que similarity}
    In a separable factor, what are the necessary and sufficient conditions for an operator to be similar to an irreducible operator?
    What are the necessary and sufficient conditions for an operator to be similar to a generator?
\end{question}

By applying Haagerup-Schultz projections \cite{HS09}, the authors in \cite{DSZ15} proved that every operator in a finite von Neumann algebra can be written as the sum of a normal operator and an s.o.t.-quasinilpotent operator.
Recently, K. Dykema and A. Krishnaswamy-Usha \cite{DK21} investigated the class of operators in a finite von Neumann algebra that are similar to the sum of a normal operator and a commuting s.o.t.-quasinilpotent operator.
This class of operators is larger than Dunford's notion of spectral operators.
It is therefore natural to investigate whether a result analogous to \Cref{cor spectral-operator} holds for this class of operators in type $\mathrm{II}_1$ factors.

This paper is organized as follows.
In \Cref{sec perliminary}, we present some necessary techniques concerning projections.
After proving the key tool \Cref{prop U*PU-Pj}, we provide the proof of \Cref{thm masa} in \Cref{sec generator}.
As an application, \Cref{cor FT} states that a self-adjoint operator in a separable factor is the real part of an irreducible operator if and only if it is not a scalar multiple of the identity.
\Cref{sec similarity} is devoted to the proof of \Cref{thm similarity}.
In \Cref{lem main-pre}, we give a sufficient condition for an operator in a separable factor to be strongly reducible.
Finally, some related results are presented in \Cref{cor spectral-operator} and \Cref{prop commutant-abelian}.

\section{Preliminaries}\label{sec perliminary}

Throughout this paper, we assume that $\mathcal{M}$ is a separable factor, i.e., $\mathcal{M}$ has trivial center and separable predual.
For every subset $\mathcal{S}$ of $\mathcal{M}$, in contrast to $W^*(\mathcal{S})$, the von Neumann subalgebra generated by $\mathcal{S}$, we introduce the following notation.

\begin{definition}\label{def W0}
    For every subset $\mathcal{S}$ of $\mathcal{M}$, we denote by $W_0^*(\mathcal{S})$ the minimal weak-operator closed self-adjoint subalgebra containing $\mathcal{S}$. Note that $W_0^*(\mathcal{S})$ might not contain the identity operator $I$ of $\mathcal{M}$. 
\end{definition}

We denote by $R(T)$ the range projection of an operator $T$ in $\mathcal{M}$.
For every subset $\mathcal{S}$ of $\mathcal{M}$, it follows from \cite[Proposition 5.1.8]{KR1} that the identity operator of $W_0^*(\mathcal{S})$ is given by
\begin{equation} \label{eq-proj-P-Wstar0(S)}
    P:= \bigvee\{R(T)\colon T\in\mathcal{S}\cup\mathcal{S}^*\}. 
\end{equation}
By definition, it is clear that $W^*(\mathcal{S}) = W_0^*(\mathcal{S}\cup\{I\})$, and the projection $P$ as defined in \eqref{eq-proj-P-Wstar0(S)} belongs to the center of $W^*(\mathcal{S})$.
Consequently, if $\mathcal{M}$ is a factor such that $\dim\mathcal{M}>1$ and $\mathcal{M}=W^*(\mathcal{S})$, then $\mathcal{M}=W_0^*(\mathcal{S})$.
We provide a routine construction for generators of von Neumann algebras in the following lemma.
This lemma, together with \Cref{cor M4}, is prepared for the proof of \Cref{lem 1=t1+t2}.

\begin{lemma}\label{lem M4}
Let $\mathcal{N}$ be a separable factor generated by three self-adjoint operators.
Then there are projections $P_1$, $P_2$, and $P_3$ in $M_4(\mathbb{C})\otimes\mathcal{N}$ such that
\begin{equation*}
  M_4(\mathbb{C})\otimes\mathcal{N}=W^*(P_1,P_2,P_3)\quad\text{and}\quad P_2P_3=0.
\end{equation*}
\end{lemma}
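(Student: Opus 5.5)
We build the three projections explicitly with the matrix units $E_{ij}$ ($1\leqslant i,j\leqslant 4$) of $M_4(\mathbb{C})$, identifying $\mathcal{N}$ with $I\otimes\mathcal{N}$ and writing $E_{ij}$ for $E_{ij}\otimes I$. Let $\mathcal{N}=W^*(A_1,A_2,A_3)$ with $A_k=A_k^*$. Replacing each $A_k$ by $A_k/(4\|A_k\|)+\tfrac12$ (or by $\tfrac12$ if $A_k=0$) does not change the generated von Neumann algebra, since $I\in W^*(\cdot)$ by convention. Hence we may assume $\tfrac14 I\leqslant A_k\leqslant\tfrac34 I$, so that $A_k$ and $I-A_k$ are positive and invertible.

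The tool is the standard $2\times 2$ projection: for $0\leqslant A\leqslant I$, the operator
\begin{equation*}
  Q(A)=\begin{pmatrix} A & \sqrt{A(I-A)}\\ \sqrt{A(I-A)} & I-A\end{pmatrix}
\end{equation*}
is a projection. Placed in corners $i,j$, it gives $E_{ii}Q E_{ii}=A\otimes E_{ii}$ and $E_{ii}QE_{jj}=\sqrt{A(I-A)}\otimes E_{ij}$.

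\textbf{Choice of projections.}
\begin{itemize}
  \item $P_1=E_{11}+E_{22}$, a diagonal projection that singles out the first two corners.
  \item $P_2=Q(A_1)$ placed in corners $\{1,3\}$.
  \item $P_3=Q(A_2)$ placed in corners $\{2,4\}$.
\end{itemize}
The supports $E_{11}+E_{33}$ and $E_{22}+E_{44}$ are orthogonal, so $P_2P_3=0$. The third generator $A_3$ must also be encoded, and this is the delicate point discussed below.

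\textbf{Recovering the matrix units.} Let $\mathcal{W}=W^*(P_1,P_2,P_3)$.
\begin{itemize}
  \item The support projections are $R(P_2)\vee R(P_2^*)$-type objects in $W^*(P_2)$, so $W^*(P_2)$ contains the central support $E_{11}+E_{33}$ of $P_2$ in the $\{1,3\}$ block. Concretely, since $A_1$ and $I-A_1$ are invertible, $\operatorname{supp}(P_2)\wedge P_1=E_{11}$ is obtainable.
  \item Directly, $P_1P_2P_1=A_1\otimes E_{11}$. Taking spectral projections of this positive operator, which is invertible on $E_{11}$, gives $E_{11}\in\mathcal{W}$. Hence $E_{22}=P_1-E_{11}\in\mathcal{W}$.
  \item $E_{33}\in\mathcal{W}$ as the range projection of $P_2-E_{11}P_2E_{11}-\dots$. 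More cleanly, $E_{33}=R\bigl((I-P_1)P_2(I-P_1)\bigr)$, because $(I-A_1)$ is invertible. Similarly $E_{44}\in\mathcal{W}$.
  \item $E_{11}P_2E_{33}=\sqrt{A_1(I-A_1)}\otimes E_{13}$. The polar decomposition of this element lies in $\mathcal{W}$, and its partial isometry is $E_{13}$ since the positive part is invertible on the corner. Likewise $E_{24}\in\mathcal{W}$.
\end{itemize}
This gives $E_{13}$ and $E_{24}$, which is not yet a full system of matrix units: we still need to connect $\{1,3\}$ with $\{2,4\}$.

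\textbf{Main obstacle: linking the blocks and fitting in $A_3$.} I would move $P_1$ off the diagonal. Replace $P_1$ by $Q(A_3)$ placed in corners $\{1,2\}$ while keeping $P_2,P_3$ as above. Then:
\begin{itemize}
  \item The diagonal parts give $E_{11}+E_{33}$ as the support of $P_2$ (a spectral projection of $P_2+P_2^2$), together with $E_{22}+E_{44}$.
  \item $(E_{11}+E_{33})P_1(E_{11}+E_{33})=A_3\otimes E_{11}$ has range projection $E_{11}$, so $E_{33}$, $E_{22}$ (from $P_1$ similarly) and $E_{44}$ follow.
  \item Polar decomposition then gives $E_{13}$, $E_{24}$ and $E_{12}$, hence all matrix units $E_{ij}\in\mathcal{W}$.
  \item Finally $E_{11}P_2E_{11}=A_1\otimes E_{11}$, and similarly for $A_2,A_3$. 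Compressing by matrix units, $E_{i1}(A_k\otimes E_{11})E_{1j}=A_k\otimes E_{ij}$, so $I\otimes A_k\in\mathcal{W}$ for each $k$.
\end{itemize}
Therefore $\mathcal{W}$ contains $M_4(\mathbb{C})\otimes I$ and $I\otimes\mathcal{N}$, so $\mathcal{W}=M_4(\mathbb{C})\otimes\mathcal{N}$ with $P_2P_3=0$.

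The step to check carefully is that each support and corner projection really lies in $\mathcal{W}$, using only range projections and spectral projections of elements of $\mathcal{W}$. The invertibility of $A_k$ and $I-A_k$ obtained from the rescaling is exactly what makes these range projections equal to the full corners.
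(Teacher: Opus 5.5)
Your final choice of $P_1,P_2,P_3$ does not generate $M_4(\mathbb{C})\otimes\mathcal{N}$, for any $\mathcal{N}$. Each of the three is a copy of $Q(A)$ sitting in two corners $\{i,j\}$. Hence it equals $VV^*$ for the partial isometry $V=\sqrt{A}\otimes E_{ii}+\sqrt{I-A}\otimes E_{ji}$, whose initial projection is $V^*V=I\otimes E_{ii}$. So each projection is only a ``quarter'' of the identity, and three of them cannot fill out $I$.

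Explicitly, put $X=(I-A_3)^{-1/2}A_3^{1/2}$, $Y=(I-A_1)^{-1/2}A_1^{1/2}$ and $Z=(I-A_2)^{-1/2}A_2^{1/2}$; these are bounded by your rescaling. Then every vector $(\xi,-X\xi,-Y\xi,ZX\xi)$ is orthogonal to the ranges of $P_1$, $P_2$ and $P_3$. Hence $R=I-P_1\vee P_2\vee P_3$ is a nonzero projection in $M_4(\mathbb{C})\otimes\mathcal{N}$, with $R\neq I$, and $RP_k=P_kR=0$ for all $k$. So $R$ commutes with $W^*(P_1,P_2,P_3)$. Since $M_4(\mathbb{C})\otimes\mathcal{N}$ is a factor, $W^*(P_1,P_2,P_3)$ must be a proper subalgebra. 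For $\mathcal{N}=\mathbb{C}$ this is simply the fact that three rank-one projections in $M_4(\mathbb{C})$ leave a common $3$-dimensional subspace invariant.

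The step where your argument breaks is the claim that $E_{11}+E_{33}$ is ``the support of $P_2$'', obtained as a spectral projection of $P_2+P_2^2$. Since $P_2$ is a projection, $W^*(P_2)=\mathbb{C}P_2+\mathbb{C}(I-P_2)$ contains no corner projection. All the later compressions, such as $(E_{11}+E_{33})P_1(E_{11}+E_{33})$, therefore assume what was to be shown.

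What is missing is a mechanism that puts a block-diagonal corner projection into the algebra; that is exactly the role of the condition $P_2P_3=0$ in the paper. The paper first packs the three generators into $\mathcal{N}_1=M_2(\mathbb{C})\otimes\mathcal{N}$ as a projection $Q_1=Q(A)$ and a diagonal operator $B_1=\mathrm{diag}(B,\tfrac12 C)$. These generate $\mathcal{N}_1$ because the normalization $\tfrac12 I\leqslant A,B,C\leqslant\tfrac34 I$ separates the spectra of $B$ and $\tfrac12 C$, so $E_{11}\otimes I_{\mathcal{N}}$ is a spectral projection of $B_1$.

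Then, in $M_2(\mathbb{C})\otimes\mathcal{N}_1$, the paper takes $P_1=Q(B_1)$, $P_2=Q_1\oplus 0$ and $P_3=(I_{\mathcal{N}_1}-Q_1)\oplus 0$. Now $P_2+P_3=I_{\mathcal{N}_1}\oplus 0$ is a corner. Compressing $P_1$ by this corner gives $B_1$. Polar decomposition of the invertible off-diagonal entry $\sqrt{B_1-B_1^2}$ gives the matrix unit linking the two blocks, and $P_2$ supplies $Q_1$.

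Your matrix-unit bookkeeping would go through once such a corner is available. The point is to make two of the three projections complementary inside one block, rather than letting each carry its own generator.
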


\begin{proof}
Let $\mathcal{N}_1=M_2(\mathbb{C})\otimes\mathcal{N}$ and $\mathcal{N}_2=M_2(\mathbb{C})\otimes\mathcal{N}_1$.
By assumption, we may assume that $\mathcal{N}=W^*(A,B,C)$ and $\frac{1}{2}I_{\mathcal{N}}\leqslant A,B,C\leqslant\frac{3}{4}I_{\mathcal{N}}$.
Let
\begin{equation*}
  Q_1=
  \begin{pmatrix}
    A & \sqrt{A-A^2} \\
    \sqrt{A-A^2} & I_{\mathcal{N}}-A
  \end{pmatrix}\quad\text{and}\quad
  B_1=
  \begin{pmatrix}
    B & 0 \\
    0 & \frac{1}{2}C
  \end{pmatrix}.
\end{equation*}
It is clear that $\mathcal{N}_1=W^*(Q_1,B_1)$, where $Q_1$ is a projection and $\frac{1}{4}I_{\mathcal{N}_1}\leqslant B_1\leqslant\frac{3}{4}I_{\mathcal{N}_1}$.
We construct three projections in $\mathcal{N}_2$ as follows
\begin{equation*}
  P_1=
  \begin{pmatrix}
    B_1 & \sqrt{B_1-B_1^2} \\
    \sqrt{B_1-B_1^2} & I_{\mathcal{N}_1}-B_1
  \end{pmatrix},\quad
  P_2=
  \begin{pmatrix}
    Q_1 & 0 \\
    0 & 0
  \end{pmatrix},\quad
  P_3=
  \begin{pmatrix}
    I_{\mathcal{N}_1}-Q_1 & 0 \\
    0 & 0
  \end{pmatrix}.
\end{equation*}
Then $\mathcal{N}_2=W^*(P_1,P_2,P_3)$ and $P_2P_3=0$.
This completes the proof.
\end{proof}

Let $(\mathcal{M},\tau)$ be a separable type $\mathrm{II}_1$ factor.
Suppose that $t$ is a positive scalar, $n$ is a positive integer with $n\geqslant t$, and $P$ is a projection in $\mathcal{M}$ with $\tau(P)=\frac{t}{n}$.
Then we define $\mathcal{M}_t=M_n(\mathbb{C})\otimes P\mathcal{M}P$.
Up to isomorphism, $\mathcal{M}_t$ is independent of the choice of $n$ and $P$.
Recall that $\mathcal{G}$ is Shen's invariant for separable type $\mathrm{II}_1$ factors \cite{Shen09}.
By \cite[Theorem 4.5]{DSSW}, we have the following scaling formula
\begin{equation}\label{equ SS-16.4.5}
  \mathcal{G}(\mathcal{M}_t)=t^{-2}\mathcal{G}(\mathcal{M})\quad\text{for all}~t>0.
\end{equation}
The following result is an immediate consequence of \Cref{lem M4}.

\begin{corollary}\label{cor M4}
Suppose that $\mathcal{M}$ is a separable type $\mathrm{II}_1$ factor with $\mathcal{G}(\mathcal{M})<\frac{1}{16}$.
Then there are projections $P_1$, $P_2$, and $P_3$ in $\mathcal{M}$ such that
\begin{equation*}
  \mathcal{M}=W^*(P_1,P_2,P_3)\quad\text{and}\quad P_2P_3=0.
\end{equation*}
\end{corollary}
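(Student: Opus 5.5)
The plan is to pass to $\mathcal{M}_{1/4}$, apply \Cref{lem M4} there, and transfer back through the isomorphism $M_4(\mathbb{C})\otimes\mathcal{M}_{1/4}\cong\mathcal{M}$.

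\textbf{Setting up $\mathcal{N}$.} Put $\mathcal{N}=\mathcal{M}_{1/4}$. Concretely, choose a projection $E\in\mathcal{M}$ with $\tau(E)=\frac14$ and take $\mathcal{N}=E\mathcal{M}E$, which is a separable type $\mathrm{II}_1$ factor. By the scaling formula \eqref{equ SS-16.4.5},
\begin{equation*}
\mathcal{G}(\mathcal{N})=\Bigl(\tfrac14\Bigr)^{-2}\mathcal{G}(\mathcal{M})=16\,\mathcal{G}(\mathcal{M})<1 .
\end{equation*}

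\textbf{Three self-adjoint generators for $\mathcal{N}$.} We need $\mathcal{N}$ to be generated by three self-adjoint operators. Since $\mathcal{G}(\mathcal{N})<1<\ldots$ is not directly enough, I would use the bound $\mathcal{G}(\mathcal{N})<\frac12$ if available. Note that \cite[Theorem 3.1]{DSSW} gives single generation when $\mathcal{G}<\frac12$, and that would suffice: a single generator $T=A+iB$ yields the two self-adjoint generators $A,B$, and we may take $C=A$.

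Here we only have $\mathcal{G}(\mathcal{N})<1$. So I would instead use the definition of $\mathcal{G}$ directly. Shen's invariant being less than $1$ should give generation by finitely many self-adjoint elements whose number is controlled. Alternatively one can use the standard fact, from \cite{DSSW} or \cite[Chapter 16]{SS-book}, that $\mathcal{G}(\mathcal{N})<1$ implies $\mathcal{N}$ is generated by two self-adjoint operators.

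This is the main obstacle. If the literature bound is only in terms of $\mathcal{G}<\frac12$, the fallback is to choose the compression size more cleverly. Take $\mathcal{N}=\mathcal{M}_{1/4}$ and apply single generation to $\mathcal{M}$ itself, noting that $\mathcal{G}(\mathcal{M})<\frac1{16}<\frac12$, so $\mathcal{M}=W^*(A_0,B_0)$. We then need generators of the corner $E\mathcal{M}E$. Writing $\mathcal{M}\cong M_4(\mathbb{C})\otimes\mathcal{N}$, the $16$ matrix entries of $A_0$ and $B_0$ generate $\mathcal{N}$. To get down to three generators, I would rely on the counting in Shen's invariant: roughly, $\mathcal{G}(\mathcal{N})<1$ means $\mathcal{N}$ is generated by a family $\{x_{ij}\}$ with a "small" number of nonzero entries, and the standard argument of \cite[Theorem 3.1]{DSSW} packs this into three self-adjoint operators. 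I expect the intended route is simply to cite the result that $\mathcal{G}(\mathcal{N})<1$ gives three self-adjoint generators (equivalently, the proof of single generation for $\mathcal{G}<\frac12$, with a doubled count).

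\textbf{Applying \Cref{lem M4} and transferring back.} With $\mathcal{N}=W^*(A,B,C)$, \Cref{lem M4} produces projections $P_1,P_2,P_3$ in $M_4(\mathbb{C})\otimes\mathcal{N}$ that generate it and satisfy $P_2P_3=0$. The isomorphism $M_4(\mathbb{C})\otimes E\mathcal{M}E\cong\mathcal{M}$ (using $4$ equivalent orthogonal copies of $E$ summing to $I$) carries these to projections in $\mathcal{M}$. It preserves both generation and the relation $P_2P_3=0$, which completes the argument.
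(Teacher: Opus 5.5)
Your outline is the paper's proof. You take $\mathcal{N}=\mathcal{M}_{1/4}$, so that $\mathcal{M}\cong M_4(\mathbb{C})\otimes\mathcal{N}$. You use \eqref{equ SS-16.4.5} to get $\mathcal{G}(\mathcal{N})=16\,\mathcal{G}(\mathcal{M})<1$. You then feed three self-adjoint generators of $\mathcal{N}$ into \Cref{lem M4} and transfer back. The step you flag as the main obstacle is closed in the paper by citing \cite[Theorem 16.6.6]{SS-book}: a separable type $\mathrm{II}_1$ factor with $\mathcal{G}<1$ is generated by three self-adjoint operators. This is the ``doubled count'' analogue of \cite[Theorem 3.1]{DSSW} that you guessed at the end, and with that citation your argument is complete.

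You should delete the alternatives you float along the way.
\begin{itemize}
\item The claim that $\mathcal{G}(\mathcal{N})<1$ gives two self-adjoint generators is a claim of single generation. The result you point to for that, \cite[Theorem 3.1]{DSSW}, needs $\mathcal{G}<\frac12$, so this is not a standard fact at the threshold $1$.
\item The fallback through single generation of $\mathcal{M}=W^*(A_0,B_0)$ only shows that $\mathcal{N}$ is generated by the matrix entries of $A_0$ and $B_0$. That is far more than three elements, and you give no way to reduce them to three.
\end{itemize}
If you wanted to rely only on the $\mathcal{G}<\frac12$ single-generation theorem, the same scheme would work, but only under the stronger hypothesis $\mathcal{G}(\mathcal{M})<\frac{1}{32}$.
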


\begin{proof}
Let $\mathcal{N}$ be a separable type $\mathrm{II}_1$ factor such that $\mathcal{M}\cong M_4(\mathbb{C})\otimes\mathcal{N}$.
It follows from \eqref{equ SS-16.4.5} that
\begin{equation*}
  \mathcal{G}(\mathcal{N})=\mathcal{G}(\mathcal{M}_{1/4})
  =16\mathcal{G}(\mathcal{M})<1.
\end{equation*}
Hence $\mathcal{N}$ is generated by three self-adjoint operators by \cite[Theorem 16.6.6]{SS-book}.
This completes the proof by \Cref{lem M4}.
\end{proof}

Next, we shall prepare some basic properties of maximal abelian self-adjoint subalgebras in separable factors.
As usual, we denote by $\mathrm{Tr}$ the standard trace on the matrix algebra $M_n(\mathbb{C})$.
Recall that the diagonal algebra $D_n(\mathbb{C})$ is the set of all diagonal operators in $M_n(\mathbb{C})$ and every maximal abelian self-adjoint subalgebra $\mathcal{A}$ of $M_n(\mathbb{C})$ is unitarily equivalent to $D_n(\mathbb{C})$, i.e., $U^*\mathcal{A}U=D_n(\mathbb{C})$ for some unitary operator $U$ in $M_n(\mathbb{C})$.
Thus, there exists an increasing sequence of projections $\{E_j\}_{j=1}^n$ such that
\begin{equation}\label{equ masa-In}
  \mathcal{A}=W^*(\{E_j\}_{j=1}^n)\quad\text{and}\quad
  \mathrm{Tr}(E_j)=j\quad\text{for each}~1\leqslant j\leqslant n.
\end{equation}
Similarly, let $(\mathcal{M},\tau)$ be a type $\mathrm{II}_1$ factor with a normal faithful tracial state $\tau$ and $\mathcal{A}$ a maximal abelian self-adjoint subalgebra of $\mathcal{M}$.
For the pair $(\mathcal{M},\mathcal{A})$, by applying \cite[Theorem 5.6.2]{SS-book}, there is an increasing family of projections $\{E_t\}_{0<t\leqslant 1}$ such that
\begin{equation}\label{equ masa-II1}
  \mathcal{A}=W^*(\{E_t\}_{0<t\leqslant 1})\quad\text{and}\quad
  \tau(E_t)=t\quad\text{for each}~0<t\leqslant 1.
\end{equation}
In the following lemma, we prove a similar result for maximal abelian self-adjoint subalgebras in properly infinite semifinite factors.
Note that if $\mathcal{A}$ is a maximal abelian self-adjoint subalgebra of $\mathcal{M}$, then $\mathcal{A}P$ is a maximal abelian self-adjoint subalgebra of $P\mathcal{M}P$ for every nonzero projection $P$ in $\mathcal{A}$.

\begin{lemma}\label{lem masa-semifinite}
    Let $(\mathcal{M},\tau)$ be a separable properly infinite semifinite factor with a normal faithful semifinite tracial weight $\tau$.
    Suppose that $\mathcal{A}$ is a maximal abelian self-adjoint subalgebra of $\mathcal{M}$ such that
    \begin{equation*}
        I=\bigvee\{Q\in\mathcal{A}\colon Q~\text{is a finite projection in}~\mathcal{M}\}.
    \end{equation*}
    \begin{enumerate} [label= $(\arabic*)$, ref= \arabic*, leftmargin=*] 
        \item  If $\mathcal{M}$ is of type $\mathrm{I}_\infty$ and 
        $\tau(P)=1$ for every minimal projection $P$ in $\mathcal{M}$, then there is an increasing sequence of projections $\{E_n\}_{n=1}^{\infty}$ with
        \begin{equation*}
            \mathcal{A}=W^*(\{E_n\}_{n=1}^{\infty}),\quad
            I=\bigvee\nolimits_{n\geqslant 1} E_n, \quad\text{and}\quad
            \tau(E_n)=n\quad\text{for each}~n\geqslant 1.
        \end{equation*}
        \item  If $\mathcal{M}$ is of type $\mathrm{II}_\infty$, then there is an increasing family of projections $\{E_t\}_{t>0}$ with
        \begin{equation*}
            \mathcal{A}=W^*(\{E_t\}_{t>0}),\quad
            I=\bigvee \nolimits_{t >0} E_t,\quad\text{and}\quad
            \tau(E_t)=t\quad\text{for each}~t>0.
        \end{equation*}
    \end{enumerate}
\end{lemma}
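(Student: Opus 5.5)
The plan is to build the family $\{E_n\}$ (resp. $\{E_t\}$) by first cutting $I$ into countably many finite projections of $\mathcal{A}$, then applying the finite-dimensional model \eqref{equ masa-In} (resp. the type $\mathrm{II}_1$ model \eqref{equ masa-II1}) on each piece, and finally concatenating the pieces.

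\textbf{Step 1: decomposition of $I$.} Since $\mathcal{M}$ is separable, every orthogonal family of nonzero projections is countable. Using Zorn's lemma, choose a maximal orthogonal family $\{F_k\}_{k\geqslant1}$ of nonzero projections in $\mathcal{A}$ that are finite in $\mathcal{M}$. We claim $\sum_k F_k=I$. If not, put $G=I-\sum_kF_k\in\mathcal{A}$. By hypothesis $G\neq 0$ cannot be orthogonal to every finite projection $Q\in\mathcal{A}$. So there is such a $Q$ with $QG\neq0$. But $QG\leqslant Q$ is a finite projection in $\mathcal{A}$, since $\mathcal{A}$ is abelian. This contradicts maximality.

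Because $\mathcal{M}$ is properly infinite, the family is infinite. Each $F_k$ has finite trace, since $\mathcal{M}$ is a factor and $F_k$ is finite.

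\textbf{Step 2: adjusting the traces.}
\begin{itemize}
\item In type $\mathrm{I}_\infty$, $F_k\mathcal{M}F_k\cong M_{m_k}(\mathbb{C})$ with $m_k=\tau(F_k)\in\mathbb{N}$, and $\mathcal{A}F_k$ is a masa in it. By \eqref{equ masa-In}, $\mathcal{A}F_k$ is generated by $m_k$ minimal projections of $\mathcal{A}$, each of trace $1$. Hence $\mathcal{A}$ is generated by a countable orthogonal family $\{P_j\}_{j\geqslant1}$ of minimal projections with sum $I$. This uses that $\mathcal{A}=\bigl(\sum_k \mathcal{A}F_k\bigr)''$, which holds since $\sum F_k=I$ with $F_k\in\mathcal{A}$. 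Set $E_n=P_1+\cdots+P_n$. Then $\tau(E_n)=n$, $\bigvee_n E_n=I$, and $W^*(\{E_n\})$ contains every $P_j=E_j-E_{j-1}$, so it equals $\mathcal{A}$.
\item In type $\mathrm{II}_\infty$, set $s_k=\tau(F_k)\in(0,\infty)$ and $S_k=s_1+\cdots+s_k$, with $S_0=0$. Then $S_k\to\infty$, since $\tau(I)=\infty$ and $\tau$ is normal. The corner $F_k\mathcal{M}F_k$ is a type $\mathrm{II}_1$ factor with trace $s_k^{-1}\tau$, and $\mathcal{A}F_k$ is a masa in it. Applying \eqref{equ masa-II1} gives an increasing family $\{E^{(k)}_r\}_{0<r\leqslant s_k}$ in $\mathcal{A}F_k$ with $\tau(E^{(k)}_r)=r$ that generates $\mathcal{A}F_k$. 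For $S_{k-1}<t\leqslant S_k$, define
\begin{equation*}
E_t=F_1+\cdots+F_{k-1}+E^{(k)}_{t-S_{k-1}}.
\end{equation*}
This family is increasing, has $\tau(E_t)=t$, and $\bigvee_t E_t=\sum_kF_k=I$.
\end{itemize}

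\textbf{Step 3: generation in the $\mathrm{II}_\infty$ case.} We show $W^*(\{E_t\})=\mathcal{A}$; the inclusion $\subseteq$ is clear.
\begin{itemize}
\item First, $F_k=E_{S_k}-E_{S_{k-1}}$ lies in $W^*(\{E_t\})$.
\item Next, each $E^{(k)}_r=E_{S_{k-1}+r}-E_{S_{k-1}}$ lies in it as well.
\item Hence $W^*(\{E_t\})\supseteq \mathcal{A}F_k$ for every $k$. Since $\sum_kF_k=I$, an element $A\in\mathcal{A}$ is the strong limit of the partial sums $\sum_{k\leqslant m}AF_k$, so $\mathcal{A}\subseteq W^*(\{E_t\})$.
\end{itemize}

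\textbf{Main obstacle.} The main obstacle is Step 1: turning the supremum hypothesis into an orthogonal decomposition of $I$ by finite projections of $\mathcal{A}$. The key point there is that subprojections in $\mathcal{A}$ of finite projections remain finite and stay in $\mathcal{A}$. The remaining bookkeeping is routine.
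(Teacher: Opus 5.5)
Your proof is correct and follows essentially the same route as the paper: write $I$ as a sum of countably many mutually orthogonal nonzero finite projections in $\mathcal{A}$, apply \eqref{equ masa-In} or \eqref{equ masa-II1} in each corner, and concatenate the resulting chains. The paper gets the decomposition by choosing a sequence of finite projections in $\mathcal{A}$ with supremum $I$ and disjointifying it, whereas you use a maximal orthogonal family; you also check details the paper leaves implicit, such as $\sum_k\tau(F_k)=\infty$ and the generation argument.
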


\begin{proof}
    By the separability of $\mathcal{A}$, there exists a sequence of finite projections $\{P_n\}_{n=1}^{\infty}$ in $\mathcal{A}$ such that $I=\bigvee_{n\geqslant 1} P_n$.
    Let
    \begin{equation*}
        Q_1=P_1\quad\text{and}\quad
        Q_n=P_n(I-Q_1-\cdots-Q_{n-1})\quad\text{for}~n\geqslant 2.
    \end{equation*}
    It is clear that $\{Q_n\}_{n=1}^{\infty}$ is a sequence of mutually orthogonal finite projections in $\mathcal{A}$ satisfying that $I=\sum_{n=1}^{\infty}Q_n$.
    By taking a subsequence, we may assume that each projection $Q_n$ is nonzero.

    If $\mathcal{M}$ is of type $\mathrm{I}_\infty$, then $k_n=\tau(Q_n)$ is a positive integer.
    Since $\mathcal{A}Q_n$ is a maximal abelian self-adjoint subalgebra of $Q_n\mathcal{M}Q_n\cong M_{k_n}(\mathbb{C})$, there is an increasing sequence of projections $\{Q_{n,j}\}_{j=1}^{k_n}$ in $\mathcal{A}Q_n$ such that $W_0^*(\{Q_{n,j}\}_{j=1}^{k_n})=\mathcal{A}Q_n$ and $\tau(Q_{n,j})=j$ for every $1\leqslant j\leqslant k_n$.
    Let
    \begin{equation*}
        E_m=
        \begin{cases}
            Q_{1,m}, & \mbox{if }~1\leqslant m\leqslant k_1, \\
            Q_1+\cdots+Q_n+Q_{n+1,j}, & \mbox{if }~m=k_1+\cdots+k_n+j, 1\leqslant j\leqslant k_{n+1}, n\geqslant 1.
        \end{cases}
    \end{equation*}
    Then $\{E_n\}_{n=1}^{\infty}$ has the desired properties.

    If $\mathcal{M}$ is of type $\mathrm{II}_\infty$, then $t_n=\tau(Q_n)$ is a positive scalar.
    Since $\mathcal{A}Q_n$ is a maximal abelian self-adjoint subalgebra of the type $\mathrm{II}_1$ factor $Q_n\mathcal{M}Q_n$, according to \eqref{equ masa-II1}, there is an increasing family of projections $\{Q_{n,t}\}_{0<t\leqslant t_n}$ in $\mathcal{A}Q_n$ such that $W_0^*(\{Q_{n,t}\}_{0<t\leqslant t_n})=\mathcal{A}Q_n$ and $\tau(Q_{n,t})=t$ for every $0<t\leqslant t_n$.
    Let
    \begin{equation*}
        E_s=
        \begin{cases}
            Q_{1,s}, & \mbox{if}~0<s\leqslant t_1, \\
            Q_1+\cdots+Q_n+Q_{n+1,t}, & \mbox{if}~s=t_1+\cdots+t_n+t, 0<t\leqslant t_{n+1}, n\geqslant 1.
        \end{cases}
    \end{equation*}
    Then $\{E_t\}_{t>0}$ has the desired properties.
    We complete the proof.
\end{proof}

\begin{remark} \label{rem masa-semifinite}
    By \cite[Theorem 2.3.7]{SS-book}, on a separable infinite-dimensional Hilbert space $\mathcal{H}$, there are two special maximal abelian self-adjoint subalgebras in $\mathcal{B}(\mathcal{H})$ which are not unitarily equivalent to each other. One is diagonal, while the other is diffuse.
    Actually, a maximal abelian self-adjoint subalgebra $\mathcal{A}$ of $\mathcal{B}(\mathcal{H})$ is diagonal if and only if $I$ is the union of all finite-dimensional projections in $\mathcal{A}$.
\end{remark}

In the following proposition, we show that, given a maximal abelian self-adjoint subalgebra $\mathcal{A}$ with a suitable condition, every partition of the identity operator $I$ is contained in $\mathcal{A}$ up to unitary equivalence.
Recall that all nonzero projections are equivalent in separable type $\mathrm{III}$ factors.

\begin{proposition}\label{prop partition}
    Let $\mathcal{M}$ be a separable factor and $\mathcal{A}$ a maximal abelian self-adjoint subalgebra of $\mathcal{M}$.
    When $\mathcal{M}$ is semifinite, we further assume that
    \begin{equation*}
        I=\bigvee\{Q\in\mathcal{A}\colon Q~\text{is a finite projection in}~\mathcal{M}\}.
    \end{equation*}
    Let $\{P_n\}_{n=1}^{\infty}$ be a sequence of projections in $\mathcal{M}$ such that $I=\sum_{n=1}^{\infty}P_n$.
    Then there exists a unitary operator $U$ in $\mathcal{M}$ such that $\{P_n\}_{n=1}^{\infty}\subseteq U\mathcal{A}U^*$.
\end{proposition}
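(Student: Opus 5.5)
The idea is to build, inside $\mathcal{A}$, a sequence of mutually orthogonal projections $\{F_n\}_{n=1}^{\infty}$ with $F_n\sim P_n$ for every $n$ and $\sum_n F_n=I$. Once we have it, we pick partial isometries $V_n\in\mathcal{M}$ with $V_n^*V_n=F_n$ and $V_nV_n^*=P_n$, and set $U=\sum_n V_n$ (strong-operator convergent). Since both families are orthogonal and sum to $I$, $U$ is a unitary in $\mathcal{M}$, and $UF_nU^*=P_n$. Hence $P_n\in U\mathcal{A}U^*$ for all $n$. We may discard the zero $P_n$'s, which are trivially in $U\mathcal{A}U^*$. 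The whole problem thus reduces to realizing a given partition of the identity up to equivalence inside $\mathcal{A}$, and we treat the types separately.

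\textbf{Type $\mathrm{III}$.} Every nonzero projection is equivalent to $I$. If infinitely many $P_n$ are nonzero, $\mathcal{A}$ is not finite dimensional, because $\mathcal{M}$ is then infinite dimensional and a finite-dimensional masa would force $\mathcal{M}=\mathcal{A}''$ to be of type $\mathrm{I}$ with finite dimension. So $\mathcal{A}$ contains infinitely many mutually orthogonal nonzero projections. We choose nonzero orthogonal $F_1,F_2,\dots$ in $\mathcal{A}$ with sum $I$, one for each nonzero $P_n$; in the finite case we let the last one absorb the remainder. All nonzero projections are equivalent, so $F_n\sim P_n$ automatically.

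\textbf{Semifinite case.} Let $\tau$ be the trace, normalized as in \eqref{equ masa-In}, \eqref{equ masa-II1}, or \Cref{lem masa-semifinite}, and use the resulting increasing family $\{E_t\}$ in $\mathcal{A}$. Its parameter is $t\in\{1,\dots,n\}$, $(0,1]$, $\mathbb{N}$, or $(0,\infty)$, and $\tau(E_t)=t$. The plan is to parametrize $\mathcal{A}$ by a ``trace coordinate'' and lay the $P_n$ consecutively along it. We split the indices into $\mathcal{F}=\{n : P_n \text{ finite}\}$ and $\mathcal{I}=\{n : P_n \text{ infinite}\}$.
\begin{itemize}
\item If $\mathcal{I}=\emptyset$, then $\sum_n\tau(P_n)=\tau(I)$. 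We set $s_n=\sum_{k\leqslant n}\tau(P_n)$ and $F_n=E_{s_n}-E_{s_{n-1}}$. In type $\mathrm{I}$ all $s_n$ are integers, so these $E$'s exist. Then $\tau(F_n)=\tau(P_n)$, which gives $F_n\sim P_n$ in a factor, and $\sum F_n=\bigvee_t E_t=I$.
\item If $\mathcal{I}\neq\emptyset$ (only possible in the properly infinite case), each infinite $P_n$ in a separable factor is equivalent to $I$, so it suffices to find $F_n$ of infinite trace. We split the parameter half-line into countably many disjoint pieces. Finite-length intervals of length $\tau(P_n)$ go to $n\in\mathcal{F}$, and infinite unions of intervals go to $n\in\mathcal{I}$. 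Concretely, we use an enumeration of $\mathbb{N}$ blocks: the unit intervals $[k-1,k)$ are partitioned among the indices in $\mathcal{I}$ infinitely often each, and the finite $P_n$ are interleaved by subdividing. Differences $E_b-E_a$ over these intervals lie in $\mathcal{A}$ and have the required traces, and everything sums to $I$.
\end{itemize}

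The main obstacle is the bookkeeping in the properly infinite semifinite case with both finite and infinite $P_n$, especially in type $\mathrm{I}_\infty$, where only integer endpoints are available. It works there because each $\tau(P_n)$ is an integer or $\infty$. One has to arrange the intervals so the infinite pieces get infinitely many blocks, and so the total covers everything, giving $\sum F_n=I$. Using the normalization in \Cref{lem masa-semifinite}, this becomes a purely combinatorial partition of $\mathbb{N}$ or $(0,\infty)$ into sets with prescribed (possibly infinite) measure, which is clearly possible.
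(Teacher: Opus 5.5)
Your proposal is correct and follows the paper's proof: you realize a partition of $I$ inside $\mathcal{A}$ that is piecewise equivalent to $\{P_n\}$, using arbitrary nonzero pieces in type $\mathrm{III}$ and consecutive differences $E_{s_n}-E_{s_{n-1}}$ of the trace-parametrized chain in the semifinite case, and then glue partial isometries into $U$. The one difference is that the paper avoids the interval bookkeeping you flag for infinite $P_n$ by first splitting every $P_n$ into countably many finite projections, which reduces to your case $\mathcal{I}=\emptyset$ and recovers each $P_n$ as a sum of elements of $U\mathcal{A}U^*$; separately, in type $\mathrm{III}$ the reason $\mathcal{A}$ is infinite-dimensional is not $\mathcal{M}=\mathcal{A}''$ (false for a masa) but that a minimal projection $E$ of $\mathcal{A}$ would be minimal in $\mathcal{M}$, since $\mathcal{A}E$ is a masa of $E\mathcal{M}E$.
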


\begin{proof}
    Suppose that $\mathcal{M}$ is of type $\mathrm{III}$.
    Without loss of generality, we assume that each $P_n$ is nonzero.
    Let $\{Q_n\}_{n=1}^{\infty}$ be a sequence of nonzero projections in $\mathcal{A}$ with $I=\sum_{n=1}^{\infty}Q_n$.
    Then there exists a partial isometry $U_n$ in $\mathcal{M}$ such that $U_nU_n^*=P_n$ and $U_n^*U_n=Q_n$ for every $n\geqslant 1$.
    We finish the proof by taking $U=\sum_{n=1}^{\infty}U_n$.

    In the remaining part of the proof, we only consider the case that $\mathcal{M}$ is of type $\mathrm{II}_\infty$, because other cases are similar.
    Since every projection is a sum of (possibly infinitely many) finite projections in $\mathcal{M}$, we may assume that each $P_n$ is a finite projection.
    Let $\{E_t\}_{t>0}$ be the increasing family of projections in $\mathcal{A}$ given by \Cref{lem masa-semifinite}.
    We define $Q_1=E_{\tau(P_1)}$ and
    \begin{equation*}
        Q_n=E_{\tau(P_1+\cdots+P_n)}-E_{\tau(P_1+\cdots+P_{n-1})}
        \quad\text{for}~n\geqslant 2.
    \end{equation*}
    Then $\{Q_n\}_{n=1}^{\infty}$ is a sequence of projections in $\mathcal{A}$ such that $I=\sum_{n=1}^{\infty}Q_n$ and $P_n\sim Q_n$ for each $n\geqslant 1$.
    The following part proceeds in the same way as the type $\mathrm{III}$ case.
\end{proof}

We end this section with the following technical lemma, which is prepared for the proof of \Cref{thm similarity}.

\begin{lemma} \label{lem P1-P2-P3}
    Let $\mathcal{M}$ be a separable factor, $\mathcal{A}$ an abelian von Neumann subalgebra of $\mathcal{M}$ such that $\dim\mathcal{A}>2$ and $P\precsim I-P$ in $\mathcal{M}$ for every minimal projection $P$ in $\mathcal{A}$.
    Then there are nonzero projections $\{P_j\}_{j=1}^{3}$ in $\mathcal{A}$ such that $I=\sum_{j=1}^{3}P_j$ and
    \begin{equation*}
        P_j\precsim I-P_j\quad\text{for every}~1\leqslant j\leqslant 3.
    \end{equation*}
\end{lemma}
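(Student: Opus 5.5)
We argue by cases on the type of $\mathcal{M}$, working with the comparison theory of projections in the abelian algebra $\mathcal{A}$. The relation $P\precsim I-P$ reads: $\operatorname{Tr}(P)\leqslant n/2$ in type $\mathrm{I}_n$; $\tau(P)\leqslant\frac12$ in type $\mathrm{II}_1$; and in properly infinite factors it says $I-P$ is infinite. The last condition holds automatically in type $\mathrm{III}$ when $P\neq I$, and in types $\mathrm{I}_\infty,\mathrm{II}_\infty$ whenever $P$ is finite.

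\textbf{Finite case.} Let $\phi$ be the normalized trace, so the goal is a partition of $I$ into three nonzero projections of $\mathcal{A}$, each of trace at most $\frac12$. Write $\mathcal{A}$ as the direct sum of its atomic part (minimal projections $F_i$, each with $\phi(F_i)\leqslant\frac12$ by hypothesis) and its diffuse part $D$. Projections in $\mathcal{A}$ achieve traces densely filling sums of atom masses plus any value in $[0,\phi(D)]$.

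We build the partition greedily. List the atoms, then continuously add diffuse mass, and let $P_1$ be a projection with $\phi(P_1)\leqslant\frac12$ that is maximal in the sense that adding the next atom or any further diffuse piece exceeds $\frac12$. Then take $P_2$ maximal with $\phi(P_2)\leqslant\frac12$ inside $I-P_1$, and set $P_3=I-P_1-P_2$.

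If $P_1$ absorbed all of the diffuse part, then $\phi(P_1)\geqslant\frac12$ is false only when we stopped at an atom $F$. In that case $\phi(P_1)+\phi(F)>\frac12$, so $\phi(P_3)\leqslant 1-\phi(P_1)-\phi(P_2)$. A cleaner route is the standard bin argument: order the pieces and use that each atom has mass at most $\frac12$. First choose $P_1$ with $\phi(P_1)$ as close to $\frac12$ from below as possible; the leftover excess is controlled by an atom of mass at most $\frac12$.

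\textbf{Main obstacle.} The key step is guaranteeing $\phi(P_3)\leqslant\frac12$ and all $P_j\neq0$. I would handle it by choosing $P_1$ to be a largest atom, or a diffuse piece of mass $\frac12$ if the diffuse part has mass $\geqslant\frac12$. Then split $I-P_1$, which has mass at least $\frac12$, into two nonzero subprojections each of mass at most $\frac12$. This is possible unless $I-P_1$ is a single atom, which is excluded since $\dim\mathcal{A}>2$ and that atom would have mass $\geqslant\frac12$; or it is two pieces, fine; or it consists of many pieces of mass at most $\phi(P_1)$, in which case grouping them greedily works.

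Nonzeroness uses $\dim\mathcal{A}>2$: if $\mathcal{A}$ has exactly two minimal projections and no diffuse part, $\dim\mathcal{A}=2$, which is excluded. If $\mathcal{A}$ has a diffuse part, any piece can be subdivided.

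\textbf{Properly infinite case.} If $I$ is finite in $\mathcal{M}$ we are in the finite case above. Otherwise $\mathcal{M}$ is properly infinite. Since $\dim\mathcal{A}>2$, we can choose nonzero $P_1,P_2,P_3\in\mathcal{A}$ summing to $I$; the task is to arrange that each complement $P_k+P_l$ is infinite.

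Since $I=P_1+P_2+P_3$ is infinite, at least one $P_j$ is infinite. If two of them are infinite, every complement contains an infinite projection, and we are done. If only $P_1$ is infinite, we split it. A minimal projection $F$ of $\mathcal{A}$ inside $P_1$ satisfies $F\precsim I-F$, so $P_1$ is not a single atom. Hence $P_1=R+S$ with $R,S$ nonzero in $\mathcal{A}$, and one of $R,S$ is infinite, say $R$.

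If $S$ is also infinite, we use $R$, $S$, and $P_2+P_3$. Otherwise, $P_1$ is infinite while $S$ and $P_2+P_3$ are finite, so we can repeat the splitting inside $R$. To avoid an infinite regress, I would use the following: in a separable properly infinite factor, an infinite projection $E$ in an abelian algebra, not an atom, splits into two infinite pieces. This follows from halving in $E\mathcal{M}E$ in type $\mathrm{III}$. In the semifinite case, if $E\mathcal{A}$ contained only finite proper subprojections... I expect this to be the delicate point. I would handle it by using countable additivity: write $E$ as a sum of countably many finite $\mathcal{A}$-projections, or else note that $E$ contains an infinite diffuse piece or infinitely many atoms. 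Then group these into two infinite families.
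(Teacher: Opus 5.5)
\textbf{There is a genuine gap in the properly infinite case, at the step you flag yourself.} Your regress is supposed to stop via the claim that an infinite projection $E\in\mathcal{A}$ which is not an atom splits into two infinite projections of $\mathcal{A}$. That claim is false. In $\mathcal{B}(\ell^2)$ let $E_0$ have infinite rank and infinite co-rank, let $F\leqslant I-E_0$ have rank one, and let $\mathcal{A}$ be spanned by $E_0$, $F$, and $I-E_0-F$. This $\mathcal{A}$ even satisfies the hypotheses of the lemma, yet $E=E_0+F$ splits in $\mathcal{A}$ only as $E_0+F$, and $F$ is finite.

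Your supporting remarks do not repair this:
\begin{itemize}
\item Halving in $E\mathcal{M}E$ produces projections that need not lie in $\mathcal{A}$. In type $\mathrm{III}$ the correct reason is simply that every nonzero projection is infinite.
\item The alternatives ``a sum of finite $\mathcal{A}$-projections, an infinite diffuse piece, or infinitely many atoms'' omit exactly the case where the part of $E$ not covered by finite $\mathcal{A}$-projections is a single infinite atom.
\end{itemize}
The claim does hold for the particular $E$ arising in your regress (an infinite $P_1$ with $P_2+P_3$ finite). Proving it, however, requires the hypothesis on minimal projections, and your sketch never uses that hypothesis at this step.

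\textbf{The missing idea is the decomposition the paper uses.} Let $Q$ be the sum of a maximal orthogonal family of finite projections in $\mathcal{A}$. Then every nonzero $\mathcal{A}$-subprojection of $I-Q$ is infinite.
\begin{itemize}
\item If $Q=0$, any three nonzero pieces work.
\item If $Q=I$, distribute the countably many finite pieces into three families with infinite sums.
\item If $Q$ is nontrivial, then $I-Q$ is infinite. After first disposing of an atom $P$ with $P\sim I-P$, it cannot be an atom: $I-Q\prec Q$ would force $Q$ to be infinite, and then $I-Q\sim Q$. So $P_1=Q$ together with any splitting $I-Q=P_2+P_3$ works.
\end{itemize}

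The same device closes your regress directly. For infinite $P_1$ with $P_2+P_3$ finite, let $Q_1$ be the analogous sum inside $P_1\mathcal{A}$.
\begin{itemize}
\item If $Q_1$ is infinite, use $Q_1$ and $P_1-Q_1$ when the latter is nonzero. Otherwise split the finite family into two subfamilies with infinite sums.
\item If $Q_1$ is finite, then $P_1-Q_1$ is infinite and cannot be an atom, because its complement $Q_1+P_2+P_3$ is finite. Hence any splitting of it gives two infinite pieces.
\end{itemize}

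\textbf{Finite case.} This is fine once you commit to ``largest atom first, then cut $I-P_1$ greedily at a point of $[\frac12-\phi(P_1),\frac12]$''. One slip: choosing a diffuse half as $P_1$ when an atom of trace $\frac12$ exists can leave $I-P_1$ a single atom, and $\dim\mathcal{A}>2$ does not rule this out. Take the atom as $P_1$ in that case.
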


\begin{proof}
    If there exists a minimal projection $P_1$ in $\mathcal{A}$ with $P_1\sim I-P_1$ in $\mathcal{M}$, then by the assumption $\dim\mathcal{A}>2$, there are nonzero projections $P_2$ and $P_3$ in $\mathcal{A}$ such that $I-P_1=P_2+P_3$.
    This completes the proof.
    Without loss of generality, we may assume that $P\prec I-P$ for every minimal projection $P$ in $\mathcal{A}$.
    The proof is divided into three cases.

    \noindent\textbf{Case I.} Suppose that $\mathcal{M}$ is of type $\mathrm{I}_n$, i.e., $\mathcal{M}\cong M_n(\mathbb{C})$.
    Let $\tau=\frac{1}{n}\mathrm{Tr}$, where $\mathrm{Tr}$ is the standard trace on $M_n(\mathbb{C})$.
    Then there are minimal projections $\{Q_j\}_{j=1}^k$ in $\mathcal{A}$ such that $I=\sum_{j=1}^{k}Q_j$.
    Moreover, $\tau(Q_j)<\frac{1}{2}$ for each $1\leqslant j\leqslant k$.
    It is clear that there exists an integer $k_1$ satisfying that $1\leqslant k_1<k$ and
    \begin{equation*}
        \tau(Q_1+\cdots+Q_{k_1})\leqslant\frac{1}{2}
        <\tau(Q_1+\cdots+Q_{k_1}+Q_{k_1+1}).
    \end{equation*}
    Since $\tau(Q_{k_1+1})<\frac{1}{2}$, we have $k_1+1<k$.
    Thus, we can take $P_1=Q_1+\cdots+Q_{k_1}$, $P_2=Q_{k_1+1}$, and $P_3=I-P_1-P_2$.

    \noindent \textbf{Case II.} Suppose that $\mathcal{M}$ is of type $\mathrm{II}_1$ with a normal faithful tracial state $\tau$.
    Let $P_0$ be the sum of all minimal projections in $\mathcal{A}$.
    Then $\mathcal{A}(I-P_0)$ is diffuse or $I-P_0=0$.
    If $\tau(P_0)>\frac{1}{2}$, then the proof is similar to \textbf{Case I}.
    If $0<\tau(P_0)\leqslant\frac{1}{2}$, then there are projections $P_2$ and $P_3$ in $\mathcal{A}(I-P_0)$ such that $I-P_0=P_2+P_3$ and $\tau(P_2)=\tau(P_3)$.
    We only need to take $P_1=P_0$.
    If $P_0=0$, then there are projections $\{P_j\}_{j=1}^{3}$ in $\mathcal{A}$ such that $I=\sum_{j=1}^{3}P_j$ and $\tau(P_j)=\frac{1}{3}$ for $1\leqslant j\leqslant 3$.

    \noindent\textbf{Case III.} Suppose that $\mathcal{M}$ is properly infinite.
    Let $\{Q_\lambda\}_{\lambda\in\Lambda}$ be a maximal orthogonal family of finite projections in $\mathcal{A}$ and $Q=\sum_{\lambda\in\Lambda}Q_\lambda$.
    Note that $Q=0$ when $\mathcal{M}$ is of type $\mathrm{III}$.
    If $Q=0$, then every nonzero projection in $\mathcal{A}$ is an infinite projection.
    Since $\dim\mathcal{A}>2$, there are nonzero projections $\{P_j\}_{j=1}^{3}$ in $\mathcal{A}$ such that $I=\sum_{j=1}^{3}P_j$.
    If $Q=I$, then there is a partition $\{\Lambda_1,\Lambda_2,\Lambda_3\}$ of $\Lambda$ such that $I\sim\sum_{\lambda\in\Lambda_j}Q_\lambda$ for $1\leqslant j\leqslant 3$.
    In this case, we take $P_j=\sum_{\lambda\in\Lambda_j}Q_\lambda$ for $1\leqslant j\leqslant 3$.
    If $Q$ is nontrivial, then $I-Q$ is an infinite projection.
    By the assumption that $P\prec I-P$ for every minimal projection $P$ in $\mathcal{A}$, $I-Q$ is not a minimal projection in $\mathcal{A}$.
    Let $P_2$ and $P_3$ be nonzero projections in $\mathcal{A}$ such that $I-Q=P_2+P_3$.
    Then both $P_2$ and $P_3$ are infinite projections.
    We can take $P_1=Q$.
    This completes the proof.
\end{proof}

\section{Special generators}\label{sec generator}

In this section, we present the proof of \Cref{thm masa}.
Before that, we prove a slightly stronger result in \Cref{prop U*PU-Pj}.
In the following lemma, we show that the matrix algebra $M_n(\mathbb{C})$ is generated by certain minimal projections for $n\geqslant 2$.
We will adopt the notation introduced in \Cref{def W0}.

\begin{lemma}\label{lem n=n1+n2}
    Let $(n_1,n_2)$ be a pair of positive integers and $n=n_1+n_2$.
    Then there are minimal projections $\{E_i\}_{i=1}^{n_1}$ and $\{F_j\}_{j=1}^{n_2}$ in $M_n(\mathbb{C})$ such that
    \begin{enumerate} [label= $(\arabic*)$, ref= \arabic*, leftmargin=*] 
        \item[$(1)$] $E_{i_1}E_{i_2}=0$ for $i_1\ne i_2$;
        \item[$(2)$] $F_{j_1}F_{j_2}=0$ for $j_1\ne j_2$;
        \item[$(3)$] $M_n(\mathbb{C})=W^*(\{E_i\}_{i=1}^{n_1},\{F_j\}_{j=1}^{n_2})$.
    \end{enumerate}
\end{lemma}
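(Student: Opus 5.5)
We take $\{E_i\}$ to be the diagonal matrix units $e_{11},\dots,e_{n_1n_1}$ of one orthonormal basis, and $\{F_j\}$ to be rank-one projections onto mutually orthogonal unit vectors $\eta_1,\dots,\eta_{n_2}$ of a second orthonormal family. Conditions (1) and (2) then hold by construction. The real task is to choose the $\eta_j$ so that (3) holds. Set $\mathcal{B}=W^*(\{E_i\},\{F_j\})$. Since $M_n(\mathbb{C})$ is a factor and these projections are nonzero, it suffices to show $\mathcal{B}'=\mathbb{C}I$: then $\mathcal{B}=\mathcal{B}''=M_n(\mathbb{C})$.

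Fix an orthonormal basis $\{e_1,\dots,e_n\}$ of $\mathbb{C}^n$ and let $E_i$ be the projection onto $\mathbb{C}e_i$ for $1\leqslant i\leqslant n_1$. Put $E_0=I-\sum_i E_i$, the projection onto $\operatorname{span}\{e_{n_1+1},\dots,e_n\}$, which has rank $n_2$. Any $T\in\mathcal{B}'$ commutes with each $E_i$ and with $E_0$. Hence $T$ is block diagonal: scalars $c_i$ on each $\mathbb{C}e_i$ and an arbitrary block $T_0$ on $\operatorname{ran}E_0$. Likewise, $T$ leaves each line $\mathbb{C}\eta_j$ invariant, so every $\eta_j$ is an eigenvector of $T$.

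The key step is choosing the $\eta_j$ so that these constraints force $T$ to be scalar. We want the vectors $\eta_j$ together to connect every $e_i$ ($i\leqslant n_1$) and every direction of $\operatorname{ran}E_0$. A natural choice is to take a unitary $W$ on $\mathbb{C}^n$ whose first $n_2$ columns $\eta_j=We_j$ have all coordinates nonzero and are sufficiently generic. Then argue as follows. Since $T\eta_j=\mu_j\eta_j$ and $T$ is block diagonal, comparing the $e_i$-coordinate for $i\leqslant n_1$ gives $c_i\langle \eta_j,e_i\rangle=\mu_j\langle\eta_j,e_i\rangle$. This forces $\mu_j=c_i$ for all $i,j$, because the coordinates are nonzero. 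So there is a single scalar $c$ with $c_i=\mu_j=c$, and $T_0(E_0\eta_j)=cE_0\eta_j$ for all $j$. It remains to ensure that $\{E_0\eta_j\}_{j=1}^{n_2}$ spans $\operatorname{ran}E_0$; then $T_0=cI$ and $T=cI$. Linear independence of the $n_2$ vectors $E_0\eta_j$ in the $n_2$-dimensional space $\operatorname{ran}E_0$ means the lower $n_2\times n_2$ block of the first $n_2$ columns of $W$ is invertible. Both requirements (all entries of these columns nonzero, this block invertible) are Zariski-open conditions on the unitary group, so a generic unitary works.

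For concreteness, one can take $W$ to be the Fourier matrix $(n^{-1/2}\omega^{(k-1)(l-1)})_{k,l}$, whose entries are all nonzero. The needed block is then a square submatrix of a Vandermonde-type matrix, with rows indexed by $n_1+1,\dots,n$ and columns by $1,\dots,n_2$. Its nodes $\omega^{k-1}$ are distinct and its exponents are consecutive, so it is an invertible Vandermonde matrix. The main point needing care is exactly this invertibility and the edge case $n_1$ or $n_2$ equal to $1$, which the Vandermonde argument covers uniformly. The case $n_1=0$ does not occur, since the pair consists of positive integers.
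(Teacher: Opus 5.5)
Your proof is correct, and it takes a genuinely different route from the paper's.

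\textbf{The paper's route.} The paper argues by induction on $n$. It starts from $e_{11}$ and the projection onto $(e_1+e_2)/\sqrt{2}$ in $M_2(\mathbb{C})$. At each step it places the old generators in the corner $M_n(\mathbb{C})\oplus 0$ and adds one rank-one projection $\frac{1}{2}(E_{11}+E_{12}+E_{21}+E_{22})$. Here $E_{11}$ is a minimal projection of the old corner orthogonal to the existing $E_i'$, and $E_{22}=0_n\oplus 1$. Generation is checked directly: $E_{22}=I-(I_n\oplus 0)$ lies in the algebra, and compressing the new projection between $E_{11}$ and $E_{22}$ yields $\frac{1}{2}E_{12}$.

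\textbf{Your route.} You write down all generators at once: diagonal matrix units and the projections onto the first $n_2$ columns of the Fourier matrix. You then prove generation via the bicommutant theorem. The nonvanishing coordinates force every $c_i$ and every $\mu_j$ to coincide, and the invertible Vandermonde minor (nodes $\omega^{k-1}$, $n_1<k\leqslant n$, exponents $0,\dots,n_2-1$) forces $T_0$ to be scalar.

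\textbf{What each buys.} Your argument is explicit and non-recursive, and it shows why essentially any generic choice works. The paper's gluing step needs no determinant computation. More importantly, it is exactly the mechanism reused in \Cref{lem 1=t1+t2}. There, half-sum projections of $2\times 2$ systems of matrix units attach corners of prescribed trace inside a type $\mathrm{II}_1$ factor, a setting where your eigenvector/Vandermonde computation has no direct analogue.

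\textbf{A minor point.} Your remark that the two conditions are Zariski-open on the unitary group is loose. The unitary group is only a real algebraic variety, and openness alone does not give nonemptiness. This is harmless, because the Fourier matrix supplies an explicit witness.
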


\begin{proof}
    We prove this lemma by induction. For $n_1=1$, $n_2=1$, and $n=2$, let
    \begin{equation*}
        E_1=
        \begin{pmatrix}
            1 & 0 \\
            0 & 0
        \end{pmatrix}\quad\text{and}\quad
        F_1=
        \begin{pmatrix}
            \frac{1}{2} & \frac{1}{2} \\
            \frac{1}{2} & \frac{1}{2}
        \end{pmatrix}.
    \end{equation*}
    Then $M_2(\mathbb{C}) = W^*(E_1, F_1)$.

    Suppose that there are minimal projections $\{E_i\}_{i=1}^{n_1}$ and $\{F_j\}_{j=1}^{n_2}$ in $M_n(\mathbb{C})$ with the desired properties.
    By symmetry, we only need to consider the pair $(n_1+1,n_2)$.
    Let $E_i'=E_i\oplus 0$ and $F_j'=F_j\oplus 0$ be minimal projections in $M_{n+1}(\mathbb{C})$.
    Then
    \begin{equation*}
        M_n(\mathbb{C})\oplus 0=W_0^*(\{E_i'\}_{i=1}^{n_1},\{F_j'\}_{j=1}^{n_2}).
    \end{equation*}
    Clearly, there exists a system of matrix units $\{E_{ij}\}_{i,j=1}^{2}$ in $M_{n+1}(\mathbb{C})$ such that $E_{11}$ is a minimal projection in $M_n(\mathbb{C})\oplus 0$ that is orthogonal to $\{E_i'\}_{i=1}^{n_1}$, and $E_{22}=0_n\oplus 1$.
    We construct a projection in $M_{n+1}(\mathbb{C})$ as follows
    \begin{equation*}
        E_{n_1+1}'=\frac{1}{2}(E_{11}+E_{12}+E_{21}+E_{22}).
    \end{equation*}
    Thus, $\{E_i'\}_{i=1}^{n_1+1}$ and $\{F_j'\}_{j=1}^{n_2}$ are minimal projections in $M_{n+1}(\mathbb{C})$ with the desired properties.
    This completes the proof.
\end{proof}

The following lemma is an analogue of \Cref{lem n=n1+n2} for separable type $\mathrm{II}_1$ factors $\mathcal{M}$ with $\mathcal{G}(\mathcal{M})=0$, where $\mathcal{G}$ denotes Shen's invariant \cite{Shen09}.

\begin{lemma}\label{lem 1=t1+t2}
    Let $(\mathcal{M},\tau)$ be a separable type $\mathrm{II}_1$ factor with $\mathcal{G}(\mathcal{M})=0$ and $(t_1,t_2)$ a pair of positive scalars with $1=t_1+t_2$.
    Then there are finitely many projections $\{P_i\}_{i=1}^{m_1}$ and $\{Q_j\}_{j=1}^{m_2}$ in $\mathcal{M}$ such that
    \begin{enumerate} [label= $(\arabic*)$, ref= \arabic*, leftmargin=*] 
        \item[$(1)$] $P_{i_1}P_{i_2}=0$ for $i_1\ne i_2$, and $t_1=\sum_{i=1}^{m_1}\tau(P_i)$;
        \item[$(2)$] $Q_{j_1}Q_{j_2}=0$ for $j_1\ne j_2$, and $t_2=\sum_{j=1}^{m_2}\tau(Q_j)$;
        \item[$(3)$] $\mathcal{M} = W^*(\{P_i\}_{i=1}^{m_1},\{Q_j\}_{j=1}^{m_2})$.
    \end{enumerate}
\end{lemma}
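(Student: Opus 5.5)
The plan is to combine the matrix-algebra construction of \Cref{lem n=n1+n2} with the three-projection generation of corners given by \Cref{cor M4}, using the scaling formula \eqref{equ SS-16.4.5} to keep Shen's invariant at zero in every corner. Fix a large integer $N$, to be chosen below. Choose mutually orthogonal equivalent projections $e_1,\dots,e_N$ in $\mathcal{M}$ with $\tau(e_k)=\frac{1}{N}$, and let $\{e_{kl}\}$ be a system of matrix units with $e_{kk}=e_k$. This gives $\mathcal{M}\cong M_N(\mathbb{C})\otimes e_1\mathcal{M}e_1$. Since $e_1\mathcal{M}e_1\cong\mathcal{M}_{1/N}$, formula \eqref{equ SS-16.4.5} gives $\mathcal{G}(e_1\mathcal{M}e_1)=N^2\mathcal{G}(\mathcal{M})=0<\frac{1}{16}$. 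Hence \Cref{cor M4} yields projections $R_1,R_2,R_3\leqslant e_1$ with $e_1\mathcal{M}e_1=W^*(R_1,R_2,R_3)$ (as a corner) and $R_2R_3=0$. All of them have trace at most $\frac{1}{N}$.

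Next I will apply \Cref{lem n=n1+n2} to a pair $(n_1,n_2)$ with $n_1+n_2=N$, transported into $\mathcal{M}$ by the matrix units $e_{kl}$. This produces orthogonal families $\{E_i\}_{i=1}^{n_1}$ and $\{F_j\}_{j=1}^{n_2}$ of trace-$\frac{1}{N}$ projections generating the copy $\mathcal{D}\cong M_N(\mathbb{C})$ spanned by the $e_{kl}$. Since $\mathcal{D}$ is a factor, $I-\sum_iE_i$ and $I-\sum_jF_j$ are nonzero projections of $\mathcal{D}$ once $n_1,n_2\geqslant 1$. So there are minimal projections $f\leqslant I-\sum_iE_i$ and $g\leqslant I-\sum_jF_j$ in $\mathcal{D}$, each unitarily conjugate within $\mathcal{D}$ to $e_1$. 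Conjugating the $R$'s, I place $R_1$ under $f$ and $R_2,R_3$ under $g$, still generating the corners $f\mathcal{M}f$ and $g\mathcal{M}g$. I then append $R_1$ to the $E$-family and $R_2,R_3$ to the $F$-family; orthogonality within each family is preserved. Generation follows because $W^*$ of everything contains $\mathcal{D}$, hence all matrix units, hence a full corner $g\mathcal{M}g$ together with partial isometries from $g$ to every $e_k$, and therefore all of $\mathcal{M}$. Finally, I pad each family with one further projection, placed in the part of the relevant complement not yet used, so that the traces add up exactly to $t_1$ and $t_2$. Adding projections never destroys generation.

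The main obstacle is the trace bookkeeping. The families built from \Cref{lem n=n1+n2} have total traces $\frac{n_1}{N}$ and $\frac{n_2}{N}$ with $n_1+n_2=N$. After appending the $R$'s, these sums are at least $\frac{n_1}{N}$ and $\frac{n_2}{N}$, so their total already reaches $1=t_1+t_2$. Thus one family overshoots its target whenever the other undershoots, unless $t_1N$ is an integer. My first attempt to fix this is to replace one minimal projection, say $F_{n_2}$, by a subprojection $F'\leqslant F_{n_2}$ of the appropriate smaller trace. I would then recover the lost generation by choosing $F'$ inside a corner already controlled by the $R$'s. Concretely, I would take $F_{n_2}$ to be equivalent to $g$ via a matrix unit $v$ in $\mathcal{D}$, and make $v^*F'v$ one of the corner generators; this is possible after re-running \Cref{cor M4} with $e_1\mathcal{M}e_1$ generated by projections one of which has prescribed trace. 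If that fails, I would instead apply \Cref{lem n=n1+n2} to $(n_1+1,n_2+1)$ on $N+2$ slots of unequal size. The idea is to split $I$ into $N$ projections of trace $\frac{1}{N}$ plus two small ones, linked by extra partial isometries whose range projections are added to the families. This trades exact equivalence of the slots for freedom in the totals.
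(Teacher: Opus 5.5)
\textbf{There is a genuine gap: the trace bookkeeping, which you flag yourself, is never resolved, and it fails in every case, not only when $t_1N\notin\mathbb{Z}$.} Your generation and orthogonality arguments are fine. The problem is that $\{E_i\}\cup\{F_j\}$ already uses all $N$ slots, and you then \emph{append} $R_1\leqslant f$ and $R_2,R_3\leqslant g$ in slots outside the respective families. So the two families together have trace $1+\tau(R_1)+\tau(R_2)+\tau(R_3)$. This is strictly larger than $1=t_1+t_2$, because $R_1\neq0$ (otherwise $W^*(R_2,R_3)$ would be abelian). Padding only adds trace, and there is no unused complement left anyway.

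Your first fix is circular. Once $F_{n_2}$ is replaced by $F'<F_{n_2}$, you no longer know that $\mathcal{D}$ lies in the generated algebra. But $\mathcal{D}$ is exactly what you need to recover $F_{n_2}$: it supplies the matrix unit $v$ and the partial isometry carrying $R_1$ from $f$ next to $R_2,R_3$ in $g$. The fix also needs a version of \Cref{cor M4} in which the traces of the $R_k$ are prescribed, which is not available. The second fix is too vague to check. \Cref{lem n=n1+n2} needs equivalent slots, and adding range projections of partial isometries does not put those partial isometries into the generated algebra. It also leaves the overshoot from the $R_k$ untouched.

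The paper closes the bookkeeping with three ideas you are missing.

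\textbf{(i) Split the minimal projections instead of appending.} Write $\mathcal{M}=M_m(\mathbb{C})\otimes\mathcal{N}$. Replace each $E_i$ by $E_i\otimes R_1$ and $E_i\otimes(I_{\mathcal{N}}-R_1)$, and each $F_j$ by $F_j\otimes R_2$, $F_j\otimes R_3$, $F_j\otimes(I_{\mathcal{N}}-R_2-R_3)$. These stay orthogonal within each family, keep the totals $\frac{n_1}{m}$ and $\frac{n_2}{m}$, and generate the corner $P\mathcal{M}P$.

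\textbf{(ii) Do not fill all slots.} Write $t_k=\frac{n_k+s_k}{m}$ with integers $n_k\geqslant1$ and $s_k\in[0,1)$. Apply \Cref{lem n=n1+n2} only inside $M_n(\mathbb{C})\oplus0_{m-n}$, where $n=n_1+n_2$. With $P=(I_n\oplus0_{m-n})\otimes I_{\mathcal{N}}$, this leaves $I-P$ of trace $\frac{s_1+s_2}{m}$.

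\textbf{(iii) Absorb the fractional parts with bridge projections.} Take $\frac{1}{2}(E_{11}+E_{12}+E_{21}+E_{22})$ with $E_{11}\leqslant P-\sum_iP_i$ and $E_{22}\leqslant I-P$, both of trace $\frac{s_1}{m}$. This projection is orthogonal to the first family and has trace exactly $\frac{s_1}{m}$. It also enlarges the generated corner to $Q=P+E_{22}$. A second bridge, with $F_{11}\leqslant Q-\sum_jQ_j$ and $F_{22}=I-Q$ of trace $\frac{s_2}{m}$, reaches all of $\mathcal{M}$.

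The point is that a bridge costs exactly as much trace as the new territory it adds. That is what makes both totals come out to $t_1$ and $t_2$.
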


\begin{proof}
    Let $m$ be a positive integer such that $t_1\geqslant\frac{1}{m}$ and $t_2\geqslant\frac{1}{m}$.
    Then we can write
    \begin{equation*}
        t_1=\frac{n_1+s_1}{m}\quad\text{and}\quad t_2=\frac{n_2+s_2}{m},
    \end{equation*}
    where $n_1,n_2$ are positive integers, and $s_1,s_2\in [0,1)$.
    Without loss of generality, we assume that $\mathcal{M} = M_m(\mathbb{C})\otimes\mathcal{N}$, where $\mathcal{N}$ is a separable type $\mathrm{II}_1$ factor. By applying \eqref{equ SS-16.4.5}, we obtain that $\mathcal{G}(\mathcal{N}) = 0$. Moreover, it follows from  \Cref{cor M4} that there are projections $R_1$, $R_2$, and $R_3$ in $\mathcal{N}$ such that
    \begin{equation*}
        \mathcal{N} = W^*(R_1,R_2,R_3)\quad\text{and}\quad R_2R_3=0.
    \end{equation*}
    Write $n=n_1+n_2\leqslant m$.
    By \Cref{lem n=n1+n2}, there are minimal projections $\{E_i\}_{i=1}^{n_1}$ and $\{F_j\}_{j=1}^{n_2}$ in $M_m(\mathbb{C})$ such that
    \begin{enumerate} [label= $(\arabic*)$, ref= \arabic*, leftmargin=*] 
        \item[$(1')$] $E_{i_1}E_{i_2}=0$ for $i_1\ne i_2$;
        \item[$(2')$] $F_{j_1}F_{j_2}=0$ for $j_1\ne j_2$;
        \item[$(3')$] $M_n(\mathbb{C})\oplus 0_{m-n} = W_0^*(\{E_i\}_{i=1}^{n_1},\{F_j\}_{j=1}^{n_2})$.
    \end{enumerate}
    We construct two families of projections in $\mathcal{M}$ as follows:
    \begin{align*}
        \{P_i\}_{i=1}^{2n_1} &= \{E_i\otimes R_1,E_i\otimes (I_{\mathcal{N}}-R_1)\colon 1\leqslant i\leqslant n_1\},\\
        \{Q_j\}_{j=1}^{3n_2} &= \{F_j\otimes R_2,F_j\otimes R_3,F_j\otimes(I_{\mathcal{N}}-R_2-R_3)\colon 1\leqslant j\leqslant n_2\}.
    \end{align*}
    Write $P=(I_n\oplus 0_{m-n})\otimes I_{\mathcal{N}}$.
    It is clear that
    \begin{equation*}
        P\mathcal{M}P = W_0^*(\{P_i\}_{i=1}^{2n_1},\{Q_j\}_{j=1}^{3n_2}).
    \end{equation*}
    Since $\tau(P-\sum_{i=1}^{2n_1}P_i) = \frac{n}{m}-\frac{n_1}{m}=\frac{n_2}{m}\geqslant\frac{s_1}{m}$ and $\tau(I-P)=\frac{s_1+s_2}{m}\geqslant\frac{s_1}{m}$, there exists a system of matrix units $\{E_{ij}\}_{i,j=1}^2$ in $\mathcal{M}$ such that
    \begin{equation*}
        \tau(E_{11})=\tau(E_{22})=\frac{s_1}{m},\quad
        E_{11}\leqslant P-\sum_{i=1}^{2n_1}P_i,\quad\text{and}\quad
        E_{22}\leqslant I-P.
    \end{equation*}
    Define $P_{2n_1+1}=\frac{1}{2}(E_{11}+E_{12}+E_{21}+E_{22})$ and $Q=P+E_{22}$.
    Then
    \begin{equation*}
        Q\mathcal{M}Q = W_0^*(\{P_i\}_{i=1}^{2n_1+1},\{Q_j\}_{j=1}^{3n_2}).
    \end{equation*}
    Similarly, since $\tau(Q-\sum_{j=1}^{3n_2}Q_j)
    =\frac{n+s_1}{m}-\frac{n_2}{m}=\frac{n_1+s_1}{m}\geqslant\frac{s_2}{m}$ and $\tau(I-Q)=\frac{s_2}{m}$, there exists a system of matrix units $\{F_{ij}\}_{i,j=1}^2$ in $\mathcal{M}$ such that
    \begin{equation*}
        \tau(F_{11}) = \tau(F_{22})=\frac{s_2}{m},\quad
        F_{11}\leqslant Q-\sum_{j=1}^{3n_2}Q_j,\quad\text{and}\quad
        F_{22}=I-Q.
    \end{equation*}
    Define $Q_{3n_2+1} = \frac{1}{2}(F_{11}+F_{12}+F_{21}+F_{22})$.
    Then
    \begin{equation*}
        \mathcal{M} = W_0^*(\{P_i\}_{i=1}^{2n_1+1},\{Q_j\}_{j=1}^{3n_2+1}).
    \end{equation*}
    We complete the proof by taking $m_1=2n_1+1$ and $m_2=3n_2+1$.
\end{proof}

As an application of \Cref{lem 1=t1+t2}, we show that any decomposition of $1$ into positive scalars can be realized as the traces of the range projections of positive operators that generates $\mathcal{M}$ under the assumption $\mathcal{G}(\mathcal{M})=0$.

\begin{lemma}\label{lem sum=1}
    Let $\mathcal{M}$ be a separable type $\mathrm{II}_1$ factor with $\mathcal{G}(\mathcal{M})=0$ and $\{t_n\}_{n=1}^{N}$ a sequence of positive scalars with $1=\sum_{n=1}^{N}t_n$, where $2\leqslant N\leqslant\infty$.
    Then there exists a sequence of positive operators $\{A_n\}_{n=1}^{N}$ in $\mathcal{M}$ such that $\mathcal{M}=W^*(\{A_n\}_{n=1}^N)$ and $\tau(R(A_n))=t_n$ for every $n\geqslant 1$.
\end{lemma}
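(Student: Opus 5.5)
Set $s_1=t_1$ and $s_2=\sum_{n\geqslant 2}t_n=1-t_1$; both are positive because $N\geqslant 2$. Apply \Cref{lem 1=t1+t2} to the pair $(s_1,s_2)$. This gives mutually orthogonal projections $\{P_i\}_{i=1}^{m_1}$ with total trace $t_1$ and mutually orthogonal projections $\{Q_j\}_{j=1}^{m_2}$ with total trace $1-t_1$, and together they generate $\mathcal{M}$.

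\textbf{Step 1: the operator $A_1$.} Put
\begin{equation*}
A_1=\sum_{i=1}^{m_1}2^{-i}P_i .
\end{equation*}
This is positive, and $R(A_1)=\sum_i P_i$, so $\tau(R(A_1))=t_1$. Since the $P_i$ are orthogonal, the spectral projections of $A_1$ are exactly the $P_i$. Hence $W^*(A_1)$ contains every $P_i$.

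\textbf{Step 2: building $B$.} Form
\begin{equation*}
B=\sum_{j=1}^{m_2}2^{-j}Q_j .
\end{equation*}
Then $W^*(B)$ contains every $Q_j$, and $R(B)=Q:=\sum_j Q_j$ has trace $1-t_1$. I would like to split $B$ into positive operators $A_2,A_3,\dots$ with $\tau(R(A_n))=t_n$ and $\sum_{n\geqslant2} t_n=\tau(Q)$, while still recovering the $Q_j$.

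\textbf{Step 3: the splitting.} Choose mutually orthogonal projections $F_n\leqslant Q$, $n\geqslant2$, with $\tau(F_n)=t_n$ and $\sum_{n\geqslant 2} F_n=Q$. This is possible inside $Q\mathcal{M}Q$ because it is a type $\mathrm{II}_1$ factor. The naive choice $A_n=F_nBF_n$ loses information, since these compressions do not recover $B$. Instead I would pick the $F_n$ so that each $F_n$ itself lies in an algebra we can recover.

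\textbf{Main obstacle.} The difficulty is to fix the $F_n$ while keeping generation. The cleanest route is to apply \Cref{lem 1=t1+t2} to the corner $Q\mathcal{M}Q$, which also has $\mathcal{G}=0$ by the scaling formula \eqref{equ SS-16.4.5}, but that only handles two blocks at a time. A more direct route is to choose the $F_n$ first, for $n\geqslant 2$, and then let $A_n=F_n+ F_nCF_n$ for a suitably small positive contraction $C$.

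I would rather use the following device.
\begin{enumerate}
\item[(a)] Set $A_1$ as in Step 1, with $R(A_1)=P:=\sum_i P_i$.
\item[(b)] Choose $A_2=\tfrac12 F_2+\tfrac14 X$, where $F_2\leqslant Q$ has trace $t_2$ and $X$ is a positive operator with $R(X)\leqslant F_2$ to be specified.
\item[(c)] Take $A_n=F_n$ for $n\geqslant3$.
\end{enumerate}
Generation would then require $W^*(P_i,F_n,X)=\mathcal{M}$.

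To get this, use the generation of $\mathcal{M}$ by $\{P_i\}\cup\{Q_j\}$ with $\sum_j Q_j=Q$. Fix a unitary $V$ in $Q\mathcal{M}Q$ that carries the $Q_j$ into a position where each $Q_j$ is "encoded" through $F_2$. One concrete way is to pick partial isometries $W_j$ with $W_j^*W_j\leqslant F_2$ and $W_jW_j^*=Q_j$. Since the projections $Q_j$ need not fit inside $F_2$, I would instead apply the whole construction after conjugating: replace $\mathcal{M}$ by the generating family $\{P_i\}\cup\{Q_j\}$ and realize the partition $\{F_n\}_{n\geqslant 2}$ of $Q$ in such a way that $F_2\geqslant$ some $Q_{j_0}$-related piece.

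Honestly, the expected hard point is exactly this: encoding the finitely many $Q_j$ into operators supported on prescribed-trace pieces $F_n$, especially when $t_2$ is small. My first attempt would be to redo the proof of \Cref{lem 1=t1+t2} with the second block split further, so that each $F_n$ comes out as a sum of some of the $Q_j$ (adjusting the scalars $s_2$ per block). Then setting $A_n=\sum_{Q_j\leqslant F_n}2^{-j}Q_j$ would finish the proof.
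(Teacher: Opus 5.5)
\textbf{There is a genuine gap: you never construct $A_2,A_3,\dots$.} Your Steps 1 and 2 agree with the paper, which uses eigenvalues $j$ where you use $2^{-j}$. One application of \Cref{lem 1=t1+t2} to $(t_1,1-t_1)$ gives $\mathcal{M}=W^*(A_1,B)$ with $R(A_1)=\sum_iP_i$ and $R(B)=Q$. But the rest of the lemma is exactly the construction of $A_2,A_3,\dots$, and it is missing.
\begin{itemize}
\item Device (b)--(c) leaves $X$ unspecified, and the identity $W^*(\{P_i\},\{F_n\},X)=\mathcal{M}$ it needs is never proved.
\item Your fallback, making each $F_n$ a sum of some of the $Q_j$, fails for $N=\infty$. \Cref{lem 1=t1+t2} yields only finitely many $Q_j$, while you need infinitely many nonzero orthogonal $F_n$.
\item For finite $N$, the fallback would need an $N$-block version of \Cref{lem 1=t1+t2} with prescribed traces, which you have not established.
\end{itemize}

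\textbf{The missing idea: choose the $F_n$ to commute with $B$.} Your objection to $A_n=F_nBF_n$ only applies to a generic partition. Your fallback also has the refinement going the wrong way. Instead of making each $F_n$ a union of $Q_j$'s, let the $F_n$ cut across the $Q_j$'s, i.e.\ $F_n=\sum_jF_nQ_j$. The argument then runs as follows.
\begin{itemize}
\item $B$ lies in a maximal abelian self-adjoint subalgebra $\mathcal{A}$ of the type $\mathrm{II}_1$ factor $Q\mathcal{M}Q$, and $\mathcal{A}$ is diffuse.
\item So there are projections $R_n\in\mathcal{A}$, $n\geqslant 2$, with $\tau(R_n)=t_n$ and $\sum_{n\geqslant 2}R_n=Q$. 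Choose them successively; the sum equals $Q$ because $\sum_{n\geqslant 2}t_n=\tau(Q)$ and $\tau$ is faithful.
\item Put $A_n=BR_n=R_nBR_n$. Since $R_n$ commutes with $B$ and $2^{-m_2}Q\leqslant B\leqslant Q$, you get $2^{-m_2}R_n\leqslant A_n\leqslant R_n$.
\item Hence $A_n\geqslant 0$ and $R(A_n)=R_n$, so $\tau(R(A_n))=t_n$.
\item Moreover $\sum_{n\geqslant 2}A_n=BQ=B$ in the strong-operator topology, so $B\in W^*(\{A_n\}_{n\geqslant 2})$.
\item Therefore $\mathcal{M}=W^*(A_1,B)\subseteq W^*(\{A_n\}_{n=1}^{N})$.
\end{itemize}
This is exactly the paper's proof. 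No second use of \Cref{lem 1=t1+t2} or of the scaling formula is needed, so the ``main obstacle'' you describe disappears.
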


\begin{proof}
    By \Cref{lem 1=t1+t2}, there are finitely many projections $\{P_i\}_{i=1}^{m_1}$ and $\{Q_j\}_{j=1}^{m_2}$ in $\mathcal{M}$ such that
    \begin{enumerate} [label= $(\arabic*)$, ref= \arabic*, leftmargin=*] 
        \item[$(1)$] $P_{i_1}P_{i_2}=0$ for $i_1\ne i_2$, and $t_1=\sum_{i=1}^{m_1}\tau(P_i)$;
        
        \item[$(2)$] $Q_{j_1}Q_{j_2}=0$ for $j_1\ne j_2$, and $1-t_1=\sum_{j=1}^{m_2}\tau(Q_j)$;
        
        \item[$(3)$] $\mathcal{M} = W^*(\{P_i\}_{i=1}^{m_1},\{Q_j\}_{j=1}^{m_2})$.
    \end{enumerate}
    Define
    \begin{equation*}
        A_1 = \sum_{j=1}^{m_1} jP_j,\quad 
        B =\sum_{j=1}^{m_2} jQ_j,\quad 
        P =\sum_{j=1}^{m_1}P_j, \quad\text{and}\quad 
        Q =\sum_{j=1}^{m_2}Q_j.
    \end{equation*}
    Then $\mathcal{M}=W^*(A_1,B)$, $R(A_1)=P$, and $R(B)=Q$.
    Since $B$ is a positive operator in $Q\mathcal{M}Q$, it is contained in some maximal abelian self-adjoint subalgebra $\mathcal{A}$ of $Q\mathcal{M}Q$.
    Clearly, $\mathcal{A}$ is a diffuse von Neumann algebra and there are projections $\{R_n\}_{n=2}^{N}$ in $\mathcal{A}$ satisfying that $Q=\sum_{n=2}^{N}R_n$ and $\tau(R_n)=t_n$ for every $n\geqslant 2$.
    The proof is complete by taking $A_n=BR_n$ for every $n\geqslant 2$.
\end{proof}

\begin{remark}\label{rem sum=1}
    Combining \Cref{lem n=n1+n2} and the proof of \Cref{lem sum=1}, we can obtain the following result:
    Let $\{n_j\}_{n=1}^{N}$ be finitely many positive integers and $n=\sum_{j=1}^{N}n_j$.
    Then there are positive operators $\{A_j\}_{j=1}^{N}$ in $M_n(\mathbb{C})$ such that $M_n(\mathbb{C})=W^*(\{A_j\}_{j=1}^N)$ and $\operatorname{rank}(A_j)=n_j$ for every $1\leqslant j\leqslant N$.
\end{remark}

If we relax the condition $1=\sum_{n=1}^{N}t_n$ in \Cref{lem sum=1}, then we can drop the assumption $\mathcal{G}(\mathcal{M})=0$ and obtain the following result.

\begin{lemma} \label{lem sum=infty}
    Let $(\mathcal{M},\tau)$ be a separable type $\mathrm{II}_1$ factor and $\{t_n\}_{n=1}^{\infty}$ a sequence of positive scalars such that $\sum_{n=1}^{\infty}t_n=\infty$ and $0<t_n\leqslant\frac{1}{2}$ for every $n\geqslant 1$.
    Then there exists a sequence of projections $\{P_n\}_{n=1}^{\infty}$ in $\mathcal{M}$ such that $\mathcal{M}=W^*(\{P_n\}_{n=1}^{\infty})$ and $\tau(P_n)=t_n$ for every $n\geqslant 1$.
\end{lemma}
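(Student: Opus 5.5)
The idea is to reduce to generating a countable family of projections that already generates $\mathcal{M}$, and to spend finite blocks of the prescribed traces on one such projection at a time. This needs neither $\mathcal{G}(\mathcal{M})=0$ nor any bound on the total number of generators.

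\textbf{Step 1 (a countable projection generating set).} Since $\mathcal{M}$ has separable predual, it has a weak-operator dense countable set of self-adjoint operators. The spectral projections of these operators corresponding to rational intervals form a countable family $\{S_k\}_{k=1}^\infty$ of projections with $\mathcal{M}=W^*(\{S_k\}_{k=1}^\infty)$. We may drop the trivial projections $0$ and $I$ from this family.

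\textbf{Step 2 (grouping the traces into blocks).} Since $\sum_n t_n=\infty$, we can partition $\mathbb{N}$ into consecutive finite blocks $B_1,B_2,\dots$ with $\sum_{n\in B_k}t_n\geqslant 2$ for every $k$. For each $k$ we further split $B_k$ into two consecutive sub-blocks $B_k'$ and $B_k''$, each with total sum at least $1$. This is possible because every $t_n\leqslant\frac12$, so partial sums increase in steps of at most $\frac12$ and no single term overshoots badly. Any leftover indices are absorbed into the last block processed, and since there are infinitely many blocks no index is wasted.

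\textbf{Step 3 (encoding $S_k$ inside one block).} Fix $k$ and let $s=\tau(S_k)\in(0,1)$. Choose a maximal abelian self-adjoint subalgebra $\mathcal{A}_k$ containing $S_k$. It is diffuse, so by \eqref{equ masa-II1} there is an increasing family $\{E_t\}_{0<t\leqslant1}$ in $\mathcal{A}_k$ with $\tau(E_t)=t$. We may take the family so that $E_s=S_k$, by building it separately inside $S_k\mathcal{A}_k$ and $(I-S_k)\mathcal{A}_k$. For $0\leqslant a<b\leqslant 1$ write $E_{[a,b]}=E_b-E_a$, with $E_0=0$.

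1. \emph{First sweep.} Along $B_k'=\{n_1<\dots<n_r\}$, lay consecutive intervals $[a_i,a_i+t_{n_i}]$ starting at $a_1=0$. Stop at the first index for which the interval reaches $1$, and replace that interval by $[1-t_{n_i},1]$; this is possible since $t_{n_i}\leqslant\frac12<1$.
2. \emph{Second sweep.} Along $B_k''$, do the same but starting at $s$ and working cyclically modulo $1$. An interval that wraps around is the projection $E_{[a,1]}+E_{[0,b]}$, which still has the correct trace.
3. Any remaining indices of $B_k$ receive arbitrary projections of the prescribed traces.

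Let $P_n$ ($n\in B_k$) be the resulting projections. All of them lie in $\mathcal{A}_k$ and commute, so $W^*(\{P_n\}_{n\in B_k})$ contains every product $P_n$ or $I-P_n$ taken over the indices. Hence it contains the projections of all atoms of the interval partition determined by the endpoints of the intervals.

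\textbf{Step 4 (conclusion and the main obstacle).} It remains to show that $s$ is a cut point of that partition. Then $S_k=E_s$ is a sum of atoms, so $S_k\in W^*(\{P_n\})$, and Step 1 gives $\mathcal{M}=W^*(\{P_n\}_{n=1}^\infty)$ with $\tau(P_n)=t_n$.

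This is the delicate point, and it is why two sweeps are used. The second sweep begins exactly at $s$, so its first interval has left endpoint $s$. The first sweep begins at $0$, so the intervals of both sweeps are bounded away from being ``shift-invariant'' and the endpoints $0$ and $s$ both survive as boundaries.

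The care needed is with the truncated final intervals. Shifting an interval back to end at $1$ (or at $s$ modulo $1$) creates overlaps. Overlaps only refine the partition, and they never remove the endpoints $0$ and $s$, since those are left endpoints of the starting intervals of the two sweeps. One must still check that the wrapped intervals of the second sweep do not make $E_{[s,s+t]}$ indistinguishable from its complement. I would handle this by noting that $E_{[0,\cdot]}$-type boundaries from the first sweep already separate $[0,s]$ from $[s,1]$ inside every wrapped interval.
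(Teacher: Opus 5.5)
Your construction works and differs genuinely from the paper's route. However, the step that carries the whole argument rests on a claim that is false as stated, and Step 4 aims at the wrong target.

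\textbf{The false claim.} You assert that $W^*(\{P_n\}_{n\in B_k})$ contains the projection of every cell of the partition of $[0,1]$ cut out by all the interval endpoints. For commuting projections $E_{[a,b]}$, the generated algebra is spanned by the atoms, i.e.\ the products of $P_n$'s and $I-P_n$'s. These correspond to sets of points with the same membership pattern, and they can be strictly coarser than the cells. For example, a single interval $[a,b]$ with $0<a<b<1$ generates only $E_{[a,b]}$ and $I-E_{[a,b]}$, not $E_a$, although $a$ is an endpoint. So ``$s$ is a cut point'' in your sense is trivial (the second sweep starts at $s$) and proves nothing. The remark that overlaps ``only refine the partition'' does not touch the real issue.

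\textbf{What you must prove instead.} You need that each sweep separates all of its own cells. In the first sweep, let $c_l$ be the partial sums and $i$ the stopping index.
\begin{enumerate}
\item The intervals $I_1,\dots,I_{i-1}$ abut, are pairwise disjoint, and cover $[0,c_{i-1}]$.
\item The closing interval $I_i=[1-t_{n_i},1]$ contains $[c_{i-1},1]$ and meets only the tail of the chain.
\item Hence the cells have pairwise distinct patterns $\{l\}$, $\{l,i\}$, $\{i\}$. So the first-sweep atoms are exactly the cells, and these are subintervals of $[0,1]$, none with $0$ in its interior.
\end{enumerate}
The same argument, run in the coordinate $[s,s+1]$ on $\mathbb{R}/\mathbb{Z}$, shows that the second-sweep cells are atoms and none has $s$ in its interior. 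An atom of the combined algebra is then $\alpha\cap\beta$, with $\alpha$ a first-sweep cell and $\beta$ a second-sweep cell. Both are arcs of length at most $\frac12$, since each lies in one interval of length $t_n$. Two such arcs meet in a connected set, up to a null set. That set has neither $0$ nor $s$ in its interior, so it lies in $[0,s]$ or in $[s,1]$. Therefore $E_s$ is a sum of atoms. This is where $t_n\leqslant\frac12$ is really used, and it is the precise form of your last sentence about wrapped intervals. With longer arcs, $\alpha\cap\beta$ could have two pieces on opposite sides of $s$.

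\textbf{Comparison with the paper.} With that repair, your proof is complete. The paper instead proceeds as follows.
\begin{enumerate}
\item It groups the $t_n$ into blocks with sums $s_j\in[\frac14,\frac34]$ and extracts $s_{j_k}\to c$.
\item It chooses the block projections $Q_{j_k}$ comparable with an s.o.t.-dense sequence $\{E_k\}$ in the set $\mathcal{P}_c$ of trace-$c$ projections. This gives $\|Q_{j_k}-E_k\|_2=|s_{j_k}-c|^{1/2}\to 0$.
\item The s.o.t.-closure of $\{Q_{j_k}\}$ then contains $\mathcal{P}_c$, which generates $\mathcal{M}$.
\item Each $Q_j$ is finally split into the $P_n$.
\end{enumerate}
The paper thus recovers generators only in the limit, using one projection per block and no combinatorics, which keeps it very short. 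Your route recovers each member of a fixed countable generating family exactly, inside the finite-dimensional abelian algebra generated by one block. It needs no approximation or compactness, at the cost of longer blocks (sums $\geqslant 2$) and the separation argument above.
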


\begin{proof}
    Let $n_0=0$ for simplicity.
    Since $0<t_n\leqslant\frac{1}{2}$, there exists $n_1>n_0$ such that $s_1=t_{n_0+1}+t_{n_0+2}+\cdots+t_{n_1}\in[\frac{1}{4},\frac{3}{4}]$.
    Inductively, for every $j\geqslant 2$, there exists $n_j>n_{j-1}$ such that
    \begin{equation*}
        s_j=t_{n_{j-1}+1}+t_{n_{j-1}+2}+\cdots+t_{n_j}
        \in\left[\frac{1}{4},\frac{3}{4}\right].
    \end{equation*}
    Clearly, there exists a convergent subsequence $\{s_{j_k}\}_{k=1}^{\infty}$ of $\{s_j\}_{j=1}^{\infty}$ such that $s_{j_k}\to c$ as $k\to\infty$ for some constant $c\in[\frac{1}{4},\frac{3}{4}]$.
    
    Recall that the strong-operator topology and the $\|\cdot\|_2$-norm topology coincide on the closed unit ball of $\mathcal{M}$, where $\|T\|_2=\tau(T^*T)^{\frac{1}{2}}$ for $T\in\mathcal{M}$.
    Let $\mathcal{P}_c$ be the set of all projections in $\mathcal{M}$ with trace $c$, $\{E_k\}_{k=1}^{\infty}$ a strong-operator dense sequence in $\mathcal{P}_c$, and $\{Q_j\}_{j=1}^{\infty}$ a sequence of projections in $\mathcal{M}$ such that
    \begin{enumerate}[label=$(\arabic*)$, ref=\arabic*, leftmargin=*]
        \item[$(1)$] $\tau(Q_j)=s_j$ for every $j\geqslant 1$;
        \item[$(2)$] either $Q_{j_k}\leqslant E_k$ or $E_k\leqslant Q_{j_k}$ for every $k\geqslant 1$.
    \end{enumerate}
    Then $\|Q_{j_k}-E_k\|_2=|s_{j_k}-c|^{\frac{1}{2}}\to 0$ as $k\to\infty$.
    Since $\{E_k\}_{k=1}^{\infty}$ is strong-operator dense in $\mathcal{P}_c$, the set $\mathcal{P}_c$ is contained in the strong-operator closure of $\{Q_{j_k}\}_{k=1}^{\infty}$.
    It follows that
    \begin{equation*}
        \mathcal{M}=W^*(\mathcal{P}_c)\subseteq W^*(\{Q_{j_k}\}_{k=1}^{\infty})\subseteq W^*(\{Q_j\}_{j=1}^{\infty}).
    \end{equation*}
    Therefore, we have $\mathcal{M}=W^*(\{Q_j\}_{j=1}^{\infty})$.
    Let $\{P_n\}_{n=1}^{\infty}$ be a sequence of projections in $\mathcal{M}$ such that
    \begin{enumerate}[label=$(\arabic*)$, ref=\arabic*, leftmargin=*]
        \item[$(1)$] $\tau(P_n)=t_n$ for every $n\geqslant 1$;
        \item[$(2)$] $Q_j=P_{n_{j-1}+1}+P_{n_{j-1}+2}+\cdots+P_{n_j}$ for every $j\geqslant 1$.
    \end{enumerate}
    Then $\{P_n\}_{n=1}^{\infty}$ has the desired properties.    
\end{proof}

The following corollary is a direct consequence of \Cref{lem sum=infty}.

\begin{corollary}\label{cor sum=infty}
    Let $(\mathcal{M},\tau)$ be a separable type $\mathrm{II}_1$ factor and $\{t_n\}_{n=1}^{\infty}$ a sequence of positive scalars such that $\sum_{n=1}^{\infty}t_n=\infty$ and $0<t_n\leqslant 1$ for every $n\geqslant 1$.
    Then there exists a sequence of positive operators $\{A_n\}_{n=1}^{\infty}$ in $\mathcal{M}$ such that $\mathcal{M}=W^*(\{A_n\}_{n=1}^{\infty})$ and $\tau(R(A_n))=t_n$ for every $n\geqslant 1$.
\end{corollary}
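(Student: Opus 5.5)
We reduce the statement to \Cref{lem sum=infty}: first split the given traces into pieces of size at most $\frac{1}{2}$, then reassemble the resulting generating projections into positive operators with the prescribed range projections.

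\textbf{Splitting.} For each $n\geqslant 1$, write $t_n=t_n'+t_n''$ with $t_n'=t_n''=\frac{t_n}{2}\in(0,\frac{1}{2}]$. The interleaved sequence
\begin{equation*}
    (t_1',t_1'',t_2',t_2'',\ldots)
\end{equation*}
consists of positive scalars in $(0,\frac{1}{2}]$ and still has infinite sum. \Cref{lem sum=infty} then gives projections $\{P_n',P_n''\}_{n\geqslant 1}$ in $\mathcal{M}$ such that
\begin{equation*}
    \mathcal{M}=W^*(\{P_n',P_n''\}_{n\geqslant 1}),\quad \tau(P_n')=t_n',\quad\text{and}\quad \tau(P_n'')=t_n''.
\end{equation*}

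\textbf{Reassembly.} We need one positive operator $A_n$ carrying both $P_n'$ and $P_n''$ in its von Neumann algebra, with $\tau(R(A_n))=t_n$. The natural candidate $P_n'+2P_n''$ fails in general: $P_n'$ and $P_n''$ need not commute or be orthogonal, and then its range projection is $P_n'\vee P_n''$, whose trace need not equal $t_n$. The repair is to require orthogonality in the lemma itself.

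The proof of \Cref{lem sum=infty} already allows this. There, $Q_j$ is split as an arbitrary sum of projections of prescribed traces $t_{n_{j-1}+1},\ldots,t_{n_j}$, and any such sum consists of mutually orthogonal projections. So we only need $P_n'$ and $P_n''$ to lie inside the same block, i.e. the partition indices $n_j$ should never separate a pair $t_n',t_n''$.

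To arrange this, run the block construction on the original sequence $t_n\leqslant 1$. The blocks $s_j$ must lie in a compact interval inside $(0,1)$ so that $c$ is a proper trace value; grouping consecutive $t_n$ gives $s_j\in[\frac{1}{4},2]$, which is not enough. Instead, modify the first step: if some $t_n>\frac{1}{2}$, treat it as its own block (it already lies in $(\frac{1}{2},1]$); otherwise group consecutive small terms into blocks in $[\frac{1}{4},\frac{3}{4}]$. All blocks then lie in $[\frac{1}{4},1]$.

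A block equal to $1$ is harmless because $Q_j=I$. But the density argument in \Cref{lem sum=infty} needs a limit point $c<1$. If infinitely many blocks have trace in $[\frac{1}{4},1-\varepsilon]$ for some $\varepsilon>0$, take $c$ as a limit point of those. If instead the block traces tend to $1$, handle this with the splitting trick: use the halves $t_n/2\in(\frac{1}{4},\frac{1}{2}]$ for those terms and pair them into blocks in the original way. In every case each block is a union of whole pairs $\{t_n',t_n''\}$.

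Inside each block choose mutually orthogonal projections $P_n',P_n''$ summing to $Q_j$. Setting $A_n=P_n'+2P_n''$, we have $W^*(A_n)\ni P_n',P_n''$ and $R(A_n)=P_n'+P_n''$, so $\tau(R(A_n))=t_n$.

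\textbf{Main obstacle.} The chief difficulty is this bookkeeping. The blocks fed into the density argument must have traces converging to some $c\in(0,1)$, while never splitting a pair coming from a single $t_n$. The case analysis on whether infinitely many $t_n$ exceed $\frac{1}{2}$, and the case where block traces approach $1$, are where care is needed.
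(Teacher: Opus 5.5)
There is a genuine gap in your proposal, in the case you flag as delicate: when the $t_n$ accumulate at $1$.

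Take $t_n=1$ for all $n$. Every block that is a union of whole pairs $\{t_n',t_n''\}$ then has trace $1$. So $Q_j=I$, and the density argument of \Cref{lem sum=infty} generates nothing. More generally, suppose $t_n\to 1$. Then all but finitely many blocks in any family of disjoint finite whole-pair blocks of trace at most $1$ are singletons $\{t_n\}$ with $n\to\infty$. The block traces therefore tend to $1$, and there is no admissible limit $c<1$.

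Your fallback is to use the halves $t_n/2$ as blocks. This necessarily puts $t_n'$ and $t_n''$ in different blocks, which contradicts your own invariant ``in every case each block is a union of whole pairs.'' You also give no mechanism for making $P_n'\perp P_n''$ once both lie in the convergent subsequence, where each is pinned near a prescribed member of the dense family $\{E_k\}$. As written, the reassembly step $R(P_n'+2P_n'')=P_n'+P_n''$ is therefore unjustified in this case.

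The missing observation is that the second half does not need to be a generator at all.

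1. Apply \Cref{lem sum=infty} directly to $\{t_n/2\}_{n\geqslant1}$. This sequence lies in $(0,\frac12]$ and has divergent sum. You get projections $P_n$ with $\mathcal{M}=W^*(\{P_n\}_{n=1}^{\infty})$ and $\tau(P_n)=t_n/2$.
2. Since $\tau(I-P_n)=1-t_n/2\geqslant t_n/2$, choose any projection $Q_n$ with $P_nQ_n=0$ and $\tau(Q_n)=t_n/2$.
3. Put $A_n=P_n+2Q_n$. Then $R(A_n)=P_n+Q_n$ has trace $t_n$.
4. $P_n$ is the spectral projection of $A_n$ for the eigenvalue $1$, so $W^*(\{A_n\})\supseteq W^*(\{P_n\})=\mathcal{M}$.

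This is the paper's proof. It needs no modification of the lemma's internals and no case analysis.

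In the benign cases where your whole-pair blocks do work (e.g.\ all $t_n\leqslant\frac12$), the lemma already applies to $\{t_n\}$ itself, so the splitting is unnecessary there too.
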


\begin{proof}
    By \Cref{lem sum=infty}, there exists a sequence of projections $\{P_n\}_{n=1}^{\infty}$ in $\mathcal{M}$ such that $\mathcal{M}=W^*(\{P_n\}_{n=1}^{\infty})$ and $\tau(P_n)=\frac{1}{2}t_n$ for every $n\geqslant 1$.
    Let $Q_n$ be a projection in $\mathcal{M}$ such that
    \begin{equation*}
        P_nQ_n=0\quad\text{and}\quad \tau(Q_n)=\frac{1}{2}t_n.    
    \end{equation*}
    We complete the proof by taking $A_n=P_n+2Q_n$.
\end{proof}

Given a partition $\{P_n\}_{n=0}^{N}$ of the identity $I$, the von Neumann algebra generated by $\{P_n\}_{n=0}^{N}$ is abelian.
We show that it is possible to find a projection $P_0'$ unitarily equivalent to $P_0$ such that the von Neumann algebra generated by $\{P_0'\}\cup\{P_n\}_{n=1}^{N}$ is $\mathcal{M}$.
The following lemma is prepared for \Cref{prop U*PU-Pj}.

\begin{lemma}\label{lem key-construction}
    Let $\mathcal{M}$ be a separable factor and $\{P_n\}_{n=0}^{N}$ a sequence of nonzero projections in $\mathcal{M}$ such that
    \begin{enumerate}[label=$(\arabic*)$, ref=\arabic*, leftmargin=*]
        \item[$(1)$] $I=\sum_{n=0}^{N}P_n$ and $2\leqslant N\leqslant\infty$;
        \item[$(2)$] $P_n\precsim P_0\precsim I-P_0$ for every $n\geqslant 1$.
    \end{enumerate}
    Suppose that there exists a sequence of positive operators $\{A_n\}_{n=1}^{N}$ in $P_0\mathcal{M}P_0$ such that $P_0\mathcal{M}P_0=W_0^*(\{A_n\}_{n=1}^{N})$ and $R(A_n)\sim P_n$ in $\mathcal{M}$ for every $n\geqslant 1$.
    Then there exists a unitary operator $U$ in $\mathcal{M}$ such that
    \begin{equation*}
        \mathcal{M}=W^*(U^*P_0U,\{P_n\}_{n=1}^{N}).
    \end{equation*}
\end{lemma}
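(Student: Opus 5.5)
Since $P_0\precsim I-P_0=\sum_{n\ge1}P_n$, the plan is to choose $U$ so that $U^*P_0U$ is the projection onto a ``graph'' built from $P_0$ and partial isometries into the $P_n$. The operators $A_n$ are then encoded as compressions $P_0(U^*P_0U)P_0$-type expressions that the generated algebra can recover. After rescaling, assume $0\leqslant A_n\leqslant \frac{1}{2^{n}}P_0$, or more simply $\|A_n\|\leqslant 1$ with $\sum_n \|A_n\|$ small; scaling does not change $W_0^*(\{A_n\})$ or the range projections. Since $R(A_n)\sim P_n$, pick partial isometries $V_n\in\mathcal{M}$ with $V_n^*V_n=R(A_n)\leqslant P_0$ and $V_nV_n^*=P_n$. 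Define
\begin{equation*}
  X=\sum_{n=1}^{N} V_n A_n^{1/2},
\end{equation*}
an operator from $P_0$ into $I-P_0$; the series converges in norm after rescaling. Then $X^*X=\sum_n A_n$, and more importantly $P_nX=V_nA_n^{1/2}$, so $X^*P_nX=A_n$. Let $Q$ be the projection onto the graph of $X$,
\begin{equation*}
  Q=\begin{pmatrix} (P_0+X^*X)^{-1} & (P_0+X^*X)^{-1}X^* \\ X(P_0+X^*X)^{-1} & X(P_0+X^*X)^{-1}X^*\end{pmatrix}
\end{equation*}
with respect to $I=P_0+(I-P_0)$. Here $P_0+X^*X$ is invertible in $P_0\mathcal{M}P_0$.

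\textbf{Step 1: $Q$ is unitarily equivalent to $P_0$.} Clearly $Q\sim P_0$, via the partial isometry given by the polar part of $P_0+X$. We also need $I-Q\sim I-P_0$. The operator $I-Q$ is the projection onto the graph of $-X^*$ over $I-P_0$, so $I-Q\sim I-P_0$ as well. Hence $Q=U^*P_0U$ for some unitary $U\in\mathcal{M}$. (The hypothesis $P_0\precsim I-P_0$ is not strictly needed here; it should be needed only to guarantee the existence of the $V_n$, combined with $P_n\precsim P_0$.)

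\textbf{Step 2: generation.} Let $\mathcal{B}=W^*(Q,\{P_n\}_{n=1}^N)$. It contains $P_0=I-\sum_n P_n$, so it contains $P_0QP_0=(P_0+X^*X)^{-1}$ and hence $X^*X$. It also contains $P_nQP_0=P_nX(P_0+X^*X)^{-1}$, so $P_nX\in\mathcal{B}$ and thus $A_n=(P_nX)^*(P_nX)\in\mathcal{B}$. Consequently $P_0\mathcal{M}P_0=W_0^*(\{A_n\})\subseteq\mathcal{B}$. Moreover $V_nA_n^{1/2}\in\mathcal{B}$, and its polar decomposition gives $V_n$, whose initial projection is $R(A_n)$, so $V_n\in\mathcal{B}$. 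Since $\mathcal{M}$ is a factor and $R(A_n)\leqslant P_0$, we get $P_n\mathcal{M}P_0=V_nP_0\mathcal{M}P_0\subseteq\mathcal{B}$. By taking adjoints and products, $P_n\mathcal{M}P_m=V_nP_0\mathcal{M}P_0V_m^*\subseteq\mathcal{B}$ as well. Finally $\mathcal{M}=\overline{\operatorname{span}}^{\,\mathrm{wot}}\bigcup_{n,m}P_n\mathcal{M}P_m\subseteq\mathcal{B}$.

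\textbf{Main obstacle.} The delicate point is Step 2's claim $V_n P_0\mathcal{M}P_0 = P_n\mathcal{M}P_0$. It only holds when $R(A_n)=P_0$; in general $V_nP_0\mathcal{M}P_0$ is just $P_n\mathcal{M}R(A_n)\mathcal{M}P_0$-like. The fix is to use that $P_0\mathcal{M}P_0\subseteq\mathcal{B}$ is itself a factor containing $R(A_n)$. Choose partial isometries $W\in P_0\mathcal{M}P_0$ with $W^*W\leqslant R(A_n)$; these lie in $\mathcal{B}$, and the products $V_nW$ generate $P_n\mathcal{M}P_0$ because the central support of $R(A_n)$ in $P_0\mathcal{M}P_0$ is $P_0$. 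We also have to check the norm convergence and the invertibility used in Step 1 when $N=\infty$, which is handled by the rescaling.
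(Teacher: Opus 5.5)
Your proof is correct and is essentially the paper's construction: the paper's $P_0'=V^*V$, with $V=A_0+\sum_n A_nE_{12}^{(n)}$, is likewise the projection onto the graph of an operator from $P_0\mathcal{H}$ into $(I-P_0)\mathcal{H}$ (namely $\sum_n E_{21}^{(n)}A_nB_0$). The paper recovers $A_n$ and $E_{12}^{(n)}$ from $P_0P_0'P_0$, $P_0P_0'P_n$ and polar decomposition exactly as you do. Your complementary-graph argument for $I-Q\sim I-P_0$ is cleaner than the paper's finite/infinite case split, and it uses neither factoriality nor hypothesis $(2)$; note that the $V_n$ come directly from $R(A_n)\sim P_n$, not from $(2)$.

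Your ``main obstacle'' is a false alarm, and the central-support detour can be dropped. Because $V_nV_n^*=P_n$, every $x\in P_n\mathcal{M}P_0$ equals $V_n(V_n^*x)$ with $V_n^*x\in R(A_n)\mathcal{M}P_0\subseteq P_0\mathcal{M}P_0$. Hence $P_n\mathcal{M}P_0=V_nP_0\mathcal{M}P_0$ holds whether or not $R(A_n)=P_0$; this is exactly the paper's identity $P_0\mathcal{M}P_n=P_0\mathcal{M}P_0E_{12}^{(n)}P_n$.
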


\begin{proof}
    Without loss of generality, we may assume that $\|A_n\|\leqslant 2^{-n}$ for every $n\geqslant 1$.
    Let $A_0=(P_0-\sum_{n=1}^{N}A_n^2)^{\frac{1}{2}}$.
    Then $A_0$ is an invertible operator in $P_0\mathcal{M}P_0$, whose inverse is denoted by $B_0$, i.e., $A_0B_0=B_0A_0=P_0$.
    Since $R(A_n)\sim P_n$, there exists a system of matrix units $\{E_{ij}^{(n)}\}_{i,j=1}^{2}$ in $\mathcal{M}$ such that
    \begin{equation*}
        E_{11}^{(n)}=R(A_n)\quad\text{and}\quad E_{22}^{(n)}=P_n.
    \end{equation*}
    Let $V=A_0+\sum_{n=1}^{N}A_nE_{12}^{(n)}$.
    It is straightforward to verify that
    \begin{equation*}
        VV^*=A_0^2+\sum_{n=1}^{N}A_n^2=P_0.
    \end{equation*}
    Hence $V$ is a partial isometry in $\mathcal{M}$.
    Let $P_0'=V^*V$ and $\mathcal{N}=W^*(P_0',\{P_n\}_{n=1}^{N})$.
    It remains to show that $P_0'$ is unitarily equivalent to $P_0$ and $\mathcal{M}=\mathcal{N}$.
    
    If $P_0(\sim P_0')$ is a finite projection, then $I-P_0\sim I-P_0'$.
    If $P_0$ is an infinite projection, then $I-P_0=\sum_{n=1}^{N}P_n$ is an infinite projection.
    Note that
    \begin{equation*}
        P_n(I-P_0')P_n=E_{21}^{(n)}(P_0-A_n^2)E_{12}^{(n)}
        \geqslant\frac{1}{2}E_{21}^{(n)}P_0E_{12}^{(n)}
        =\frac{1}{2}P_n\quad\text{for every}~n\geqslant 1.
    \end{equation*}
    It follows that $I-P_0'$ is an infinite projection, and hence $I-P_0\sim I-P_0'$.
    Thus, $P_0$ and $P_0'$ are unitarily equivalent in $\mathcal{M}$.
    
    Recall that $\mathcal{N}=W^*(P_0',\{P_n\}_{n=1}^{N})$.
    Then $P_0=I-\sum_{n=1}^{N}P_n\in\mathcal{N}$.
    It follows that $P_0P_0'P_0=A_0^2\in\mathcal{N}$, and hence $A_0\in\mathcal{N}$.
    Since $B_0$ is the inverse of $A_0$ in $P_0\mathcal{M}P_0$, we have $B_0\in\mathcal{N}$.
    For every $n\geqslant 1$, we have
    \begin{equation*}
        B_0P_0P_0'P_n=B_0A_0A_nE_{12}^{(n)}= A_nE_{12}^{(n)}\in\mathcal{N}.
    \end{equation*}
    By polar decomposition, we see that $A_n,E_{12}^{(n)}\in\mathcal{N}$.
    It follows that
    \begin{equation*}
        P_0\mathcal{M}P_0= W_0^*(\{A_n\}_{n=1}^{N})\subseteq\mathcal{N}.
    \end{equation*}
    In particular, $P_0\mathcal{M}P_0=P_0\mathcal{N}P_0$.
    For every $n\geqslant 1$, we have
    \begin{equation*}
        P_0\mathcal{M}P_n=P_0\mathcal{M}P_0E_{12}^{(n)}P_n
        =P_0\mathcal{N}P_0E_{12}^{(n)}P_n=P_0\mathcal{N}P_n.
    \end{equation*}
    Thus, $P_n\mathcal{M}P_0=(P_0\mathcal{M}P_n)^*=(P_0\mathcal{N}P_n)^*=P_n\mathcal{N}P_0$.
    Similarly, for every $m,n\geqslant 1$, we have
    \begin{equation*}
        P_m\mathcal{M}P_n= P_mE_{21}^{(m)}P_0\mathcal{M}P_0E_{12}^{(n)}P_n
        = P_mE_{21}^{(m)}P_0\mathcal{N}P_0E_{12}^{(n)}P_n
        = P_m\mathcal{N}P_n.
    \end{equation*}
    We complete the proof.        
\end{proof}

\begin{proposition}\label{prop U*PU-Pj}
    Let $\mathcal{M}$ be a separable factor and $\{P_j\}_{j=0}^{N}$ a sequence of nonzero projections in $\mathcal{M}$ such that
    \begin{enumerate}[label=$(\arabic*)$, ref=\arabic*, leftmargin=*] 
        \item[$(1)$] $I=\sum_{j=0}^{N}P_j$ and $2\leqslant N\leqslant\infty$;
        \item[$(2)$] $P_j\precsim P_0\precsim I-P_0$ for every $j\geqslant 1$.
    \end{enumerate}
    We further assume that $\mathcal{G}(\mathcal{M})=0$ if $\mathcal{M}$ is of type $\mathrm{II}_1$.
    Then there exists a unitary operator $U$ in $\mathcal{M}$ such that
    \begin{equation*}
        \mathcal{M}=W^*(U^*P_0U,\{P_j\}_{j=1}^{N}).
    \end{equation*}
\end{proposition}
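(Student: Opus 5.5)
The plan is to reduce the proposition to \Cref{lem key-construction}. The only work is to produce positive operators $\{A_j\}_{j=1}^{N}$ in $P_0\mathcal{M}P_0$ with
\begin{equation*}
P_0\mathcal{M}P_0=W_0^*(\{A_j\}_{j=1}^{N})\quad\text{and}\quad R(A_j)\sim P_j \text{ in } \mathcal{M} \text{ for every } j\geqslant 1.
\end{equation*}
Since $P_j\precsim P_0$, each $P_j$ is equivalent to a subprojection of $P_0$. So the problem is the following: inside the corner $P_0\mathcal{M}P_0$, find generators whose range projections have prescribed equivalence classes, each class at most that of the unit $P_0$. I would split into cases by the type of $\mathcal{M}$, and hence of $P_0\mathcal{M}P_0$.

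\textbf{Case $P_0$ finite and $\mathcal{M}$ of type $\mathrm{II}_1$, or $\mathrm{II}_\infty$ with $P_0$ finite.} Normalize the trace so that $\tau(P_0)=1$ on the $\mathrm{II}_1$ factor $P_0\mathcal{M}P_0$, and set $t_j=\tau(P_j)\in(0,1]$. If $\mathcal{M}$ is $\mathrm{II}_1$, then $\tau(I-P_0)\geqslant\tau(P_0)$ gives $\sum_j t_j\geqslant 1$. If $\mathcal{M}$ is $\mathrm{II}_\infty$, then $I-P_0$ is infinite, so $\sum_j t_j=\infty$.

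If $\sum_j t_j=\infty$, \Cref{cor sum=infty} applied to $P_0\mathcal{M}P_0$ gives the $A_j$ directly.

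If $\sum_j t_j<\infty$, we are in the $\mathrm{II}_1$ case, where $\mathcal{G}(\mathcal{M})=0$ by hypothesis, and hence $\mathcal{G}(P_0\mathcal{M}P_0)=0$ by the scaling formula \eqref{equ SS-16.4.5}. I would then choose numbers $t_j'\leqslant t_j$ with $\sum_j t_j'=1$ and all $t_j'>0$; this is possible since $\sum_j t_j\geqslant 1$ and $N\geqslant 2$. \Cref{lem sum=1} then gives positive $A_j'$ generating the corner with $\tau(R(A_j'))=t_j'$.

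These ranges are too small when $t_j'<t_j$, so they must be enlarged. Since the $R(A_j')$ are mutually orthogonal with sum $P_0$ (as constructed in the proof of \Cref{lem sum=1}), no room remains, so I cannot simply add orthogonal pieces. Instead I would pick projections $F_j\geqslant R(A_j')$ with $\tau(F_j)=t_j$, which are allowed to overlap other ranges, and set $A_j=A_j'+\varepsilon_j F_j$. Generation still has to be ensured. One fix is to take $A_j=2A_j'+(F_j-R(A_j'))$ after arranging $\|A_j'\|<1/2$, and then recover $A_j'$ by spectral projections, provided $F_j-R(A_j')$ commutes with $A_j'$. This holds if $F_j-R(A_j')$ is orthogonal to $R(A_j')$, which is automatic since $F_j\geqslant R(A_j')$. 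Then $A_j'=A_j\chi_{[0,1/2)}(A_j)$ recovers the original generators, and $R(A_j)=F_j\sim P_j$.

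\textbf{Type $\mathrm{I}_n$.} Here $\mathrm{rank}\,P_0\geqslant$ each $\mathrm{rank}\,P_j$ and $\sum_j \mathrm{rank}\,P_j\geqslant\mathrm{rank}\,P_0$. I would use \Cref{rem sum=1} with integers $n_j'\leqslant\mathrm{rank}\,P_j$ summing to $\mathrm{rank}\,P_0$, some possibly zero; zero entries are handled by $A_j=R$ for an arbitrary projection of the right rank, since adding generators does no harm. Then I would enlarge the ranges by the same spectral trick. The subcase $\mathrm{rank}\,P_0=1$ is trivial.

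\textbf{$P_0$ infinite (type $\mathrm{I}_\infty$, $\mathrm{II}_\infty$, $\mathrm{III}$).} Here $P_0\mathcal{M}P_0$ is properly infinite and singly generated by \cite{SS63,Wog69}, say by a self-adjoint pair and hence by positive operators. For infinite $P_j$, any projection equivalent to $P_0$ works as a range. For finite $P_j$, I would take small positive operators with range equivalent to $P_j$, and put the generating information into the operators attached to infinite $P_j$ when these exist. If all $P_j$ ($j\geqslant1$) are finite, then $N=\infty$ with infinite sum, and I would encode the generators by a countable family of finite-rank pieces, as in \Cref{lem sum=infty}: a dense sequence of finite projections of fixed type generates the corner. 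Again the spectral trick adjusts the ranges.

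The main obstacle I expect is this range-adjustment and generation bookkeeping, especially the properly infinite case with all $P_j$ finite, and checking that the scaled invariant condition transfers to the corner. After that, \Cref{lem key-construction} finishes the proof.
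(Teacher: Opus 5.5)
Your reduction to \Cref{lem key-construction} and your handling of the cases with $P_0$ finite agree with the paper. The paper uses the same trick, $A_j=B_j+(\|B_j\|+1)Q_j$ with $Q_j\perp R(B_j)$. There are a few minor slips. The recovery formula should be $A_j'=\tfrac12 A_j\chi_{[0,1)}(A_j)$. The ranges produced by \Cref{lem sum=1} are not mutually orthogonal, though this does no harm. Type $\mathrm{I}_\infty$ with $P_0$ finite is omitted, but it is easy.

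\textbf{The gap: $P_0$ infinite.} Your recipe puts the generating data into the operators attached to infinite $P_j$, and uses the finite $P_j$ only to match ranges. This fails when exactly one $P_j$ with $j\geqslant 1$ is infinite and some other is finite. An example is $\mathcal{M}=\mathcal{B}(\mathcal{H})$, $N=2$, $P_0,P_1$ infinite, $P_2$ of rank one.
\begin{itemize}
\item A single positive operator generates an abelian algebra, so it cannot generate the non-abelian factor $P_0\mathcal{M}P_0$.
\item You give no argument that one positive operator, together with positive operators of prescribed \emph{finite} range classes, can generate $P_0\mathcal{M}P_0$.
\item In type $\mathrm{II}_\infty$ this is not routine; it is exactly what the proposition asserts in that configuration.
\end{itemize}
Your all-finite subcase is also only a sketch. \Cref{lem sum=infty} is a type $\mathrm{II}_1$ statement that needs $t_n\leqslant\frac12$ for a normalized trace. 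Here the $\tau(P_j)$ have no upper bound and may tend to $0$.

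\textbf{The missing idea: the paper's recursion.} If some $P_j$ is finite, relabel so that $P_1$ is finite.
\begin{itemize}
\item Split each $P_j$ ($j\geqslant 2$) as $\sum_k P_{jk}$ with $P_{jk}\precsim P_1$.
\item Transport the partition $\{P_1\}\cup\{P_{jk}\}$ of $I-P_0$ into $P_0$ by a partial isometry $V$ with $V^*V=P_0$ and $VV^*=I-P_0$.
\item Then $Q_1=V^*P_1V$ is finite, every $Q_{jk}=V^*P_{jk}V\precsim Q_1$, and $P_0-Q_1$ is infinite.
\item So the already established finite case (\Cref{cor sum=infty} with \Cref{lem key-construction}), applied inside $P_0\mathcal{M}P_0$, gives a unitary $U_0$ with $P_0\mathcal{M}P_0=W_0^*(U_0^*Q_1U_0,\{Q_{jk}\})$.
\item Take $A_1=U_0^*Q_1U_0$ and $A_j=\sum_k 2^{-k}Q_{jk}$ for $j\geqslant 2$.
\end{itemize}
The splitting is what supplies the trace bound $\tau(Q_{jk})\leqslant\tau(Q_1)$ needed for \Cref{cor sum=infty}. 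The argument covers one, several, or no infinite $P_j$ in the same way. Wogen's two generators are used directly only when every $P_j$ is infinite.
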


\begin{proof}
    By \Cref{lem key-construction}, it suffices to find positive operators $\{A_j\}_{j=1}^{N}$ in $P_0\mathcal{M}P_0$ such that $P_0\mathcal{M}P_0=W_0^*(\{A_j\}_{j=1}^{N})$ and $R(A_j)\sim P_j$ in $\mathcal{M}$ for every $j\geqslant 1$.

    \noindent\textbf{Case I.}
    Suppose that $\mathcal{M}$ is of type $\mathrm{I}_n$, i.e., $\mathcal{M}\cong M_n(\mathbb{C})$.
    Let $n_j=\mathrm{Tr}(P_j)$ for every $j\geqslant 0$.
    Then $n_0\leqslant\sum_{j=1}^{N}n_j$ and $n_j\leqslant n_0$ for every $j\geqslant 1$.
    By \Cref{rem sum=1}, there are positive operators $\{B_j\}_{j=1}^{N}$ in $P_0\mathcal{M}P_0$ such that
    \begin{equation*}
        P_0\mathcal{M}P_0=W_0^*(\{B_j\}_{j=1}^N)
    \end{equation*}
    and $\operatorname{rank}(B_j)\leqslant n_j$ for every $j\geqslant 1$.
    Let $Q_j$ be a projection in $M_n(\mathbb{C})$ such that
    \begin{equation*}
        Q_jB_j=0\quad\text{and}\quad \mathrm{Tr}(Q_j)=n_j-\operatorname{rank}(B_j).
    \end{equation*}
    Let $A_j=B_j+(\|B_j\|+1)Q_j$ for every $1\leqslant j\leqslant N$.
    Then $\operatorname{rank}(A_j)=n_j=\mathrm{Tr}(P_j)$.
    Hence $\{A_j\}_{j=1}^{N}$ has the desired properties.
    
    \noindent\textbf{Case II.}
    Suppose that $\mathcal{M}$ is of type $\mathrm{II}_1$ with $\mathcal{G}(\mathcal{M})=0$.
    Let $t_j=\tau(P_j)$ for every $j\geqslant 0$.
    Then $t_0\leqslant\sum_{j=1}^{N}t_j$ and $t_j\leqslant t_0$ for every $j\geqslant 1$.
    By \Cref{lem sum=1}, there are positive operators $\{B_j\}_{j=1}^{N}$ in $P_0\mathcal{M}P_0$ such that
    \begin{equation*}
        P_0\mathcal{M}P_0=W_0^*(\{B_j\}_{j=1}^N)
    \end{equation*}
    and $\tau(R(B_j))\leqslant t_j$ for every $j\geqslant 1$.
    The remaining part is similar to \textbf{Case I}.

    In the following cases, we assume that $\mathcal{M}$ is a properly infinite factor.
    
    \noindent\textbf{Case III.}
    Suppose that $P_0$ is a finite projection in $\mathcal{M}$.
    It is clear that $N=\infty$ and $\mathcal{M}$ is of type $\mathrm{I}_\infty$ or type $\mathrm{II}_\infty$.
    If $\mathcal{M}$ is of type $\mathrm{II}_\infty$, then $P_0\mathcal{M}P_0$ is of type $\mathrm{II}_1$.
    Thus, we can find such $\{A_j\}_{j=1}^{\infty}$ by \Cref{cor sum=infty}.
    If $\mathcal{M}$ is of type $\mathrm{I}_\infty$, then $P_0\mathcal{M}P_0$ is *-isomorphic to $M_{n_0}(\mathbb{C})$, where $n_0=\mathrm{Tr}(P_0)$.
    Therefore, we can easily find such $\{A_j\}_{j=1}^{\infty}$ as $M_{n_0}(\mathbb{C})$ is generated by finitely many minimal projections.
    
    \noindent\textbf{Case IV.}
    Suppose that $P_0$ is an infinite projection in $\mathcal{M}$.
    Then $P_0\mathcal{M}P_0$ is a properly infinite factor.
    By \cite{Wog69}, there are invertible positive operators $A$ and $B$ in $P_0\mathcal{M}P_0$ such that $P_0\mathcal{M}P_0=W_0^*(A,B)$.
    If $P_j$ is an infinite projection for every $j\geqslant 1$, then we take $A_1=A$ and $A_j=B$ for every $j\geqslant 2$.
    It is clear that $\{A_j\}_{j=1}^{N}$ has the desired properties.
    Without loss of generality, we may assume that $P_1$ is a finite projection.
    For each $j\geqslant 2$, there exists a family of nonzero projections $\{P_{jk}\}_{k\in\Lambda_j}$ in $\mathcal{M}$ such that $P_j=\sum_{k\in\Lambda_j}P_{jk}$ and $P_{jk}\precsim P_1$ for every $k\in\Lambda_j$, where $\Lambda_j\subseteq\mathbb{N}$.
    Then
    \begin{equation*}
        I-P_0=\sum_{j=1}^{N}P_j=P_1+\sum_{j=2}^{N}\sum_{k\in\Lambda_j}P_{jk}.
    \end{equation*}
    Since both $P_0$ and $I-P_0$ are infinite projections, there exists a partial isometry $V$ in $\mathcal{M}$ such that $P_0=V^*V$ and $I-P_0=VV^*$.
    Let $Q_1=V^*P_1V$ and $Q_{jk}=V^*P_{jk}V$.
    By \textbf{Case III}, there exists a unitary operator $U_0$ in $P_0\mathcal{M}P_0$ such that
    \begin{equation*}
        P_0\mathcal{M}P_0=W_0^*(U_0^*Q_1U_0,\{Q_{jk}\colon k\in\Lambda_j,2\leqslant j\leqslant N\}).
    \end{equation*}
    Let $A_1=U_0^*Q_1U_0$ and $A_j=\sum_{k\in\Lambda_j}\frac{1}{2^k}Q_{jk}$ for every $j\geqslant 2$.
    It is routine to verify that $\{A_j\}_{j=1}^{N}$ has the desired properties.
\end{proof}

\begin{remark}\label{rem U*PU-Pj}
    We illustrate the necessity of the conditions $(1)$ and $(2)$ in \Cref{prop U*PU-Pj}.
    Let $Q=U^*P_0U$.
    Then $\mathcal{M}=W^*(Q,\{P_j\}_{j=1}^{N})$.
    Since the von Neumann algebra generated by two projections can only have direct summands of type $\mathrm{I}_1$ and $\mathrm{I}_2$, the condition $N\geqslant 2$ is necessary.
    Since $Q-R(Q(I-P_0))$ is a central projection, it follows that $Q=R(Q(I-P_0))$ and hence
    \begin{equation*}
        P_0\sim Q=R(Q(I-P_0))\sim R((I-P_0)Q)\leqslant I-P_0.
    \end{equation*}
    Similarly, since $P_j-R(P_jQ)$ is a central projection for every $j\geqslant 1$, we have
    \begin{equation*}
        P_j=R(P_jQ)\sim R(QP_j)\leqslant Q\sim P_0.
    \end{equation*}
    Therefore, the condition $P_j\precsim P_0\precsim I-P_0$ is also necessary.
\end{remark}

Now we are ready to present the proof of \Cref{thm masa}.

\begin{proof}[Proof of \Cref{thm masa}]
    Since the conclusion is clearly true for $\mathbb{C}$ and $M_2(\mathbb{C})$, we only need to consider the case that $\dim\mathcal{M}>4$.
    Let $P$ be a nonzero projection in $\mathcal{A}$ with $P\precsim I-P$ in $\mathcal{M}$.
    We claim that there exists a sequence of nonzero projections $\{P_j\}_{j=0}^{N}$ in $\mathcal{A}$ such that
    \begin{enumerate}[label=$(\arabic*)$, ref=\arabic*, leftmargin=*]
        \item[$(1')$] $I-P=\sum_{j=1}^{N}P_j$ and $2\leqslant N\leqslant\infty$;
        \item[$(2')$] $P_j\precsim P$ for every $j\geqslant 1$.
    \end{enumerate}
    If $P$ is an infinite projection, then we can easily find such $\{P_j\}_{j=0}^{N}$ because the condition $(2')$ is automatically true.
    If $P$ is a finite projection, then \eqref{equ masa-In}, \eqref{equ masa-II1}, and
    \Cref{lem masa-semifinite} guarantee the existence of such $\{P_j\}_{j=1}^{N}$.
    By \Cref{prop U*PU-Pj}, there exists a unitary operator $U$ in $\mathcal{M}$ such that
    \begin{equation*}
        \mathcal{M}=W^*(U^*PU,\{P_j\}_{j=1}^{N})\subseteq W^*(U^*PU,\mathcal{A}(I-P)).
    \end{equation*}
    This completes the proof.
\end{proof}

At the end of this section, we list several consequences of \Cref{thm masa} and \Cref{prop U*PU-Pj}.
The following proposition is a generalization of \cite[Theorem 2]{FT69}.

\begin{proposition}\label{prop FT}
    Let $\mathcal{M}$ be a separable factor with $\dim\mathcal{M}>1$.
    We further assume that $\mathcal{G}(\mathcal{M})=0$ if $\mathcal{M}$ is of type $\mathrm{II}_1$.
    Then a self-adjoint operator in $\mathcal{M}$ is the real part of a generator if and only if it is not a scalar multiple of the identity.
\end{proposition}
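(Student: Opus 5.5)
The necessity direction is immediate. If $H=\lambda I$ and $T=H+iK$ generates $\mathcal{M}$, then $W^*(T)=W^*(K)$ is abelian. Since $\dim\mathcal{M}>1$ and $\mathcal{M}$ is a factor, this is a contradiction. So it remains to prove sufficiency: given a self-adjoint $H\in\mathcal{M}$ that is not scalar, we must find a self-adjoint $K$ with $W^*(H,K)=\mathcal{M}$. Then $T=H+iK$ is a generator with real part $H$.

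The plan is to use the spectral projections of $H$ together with \Cref{thm masa}. Since $H$ is not scalar, its spectrum contains at least two points, so there is a spectral projection $E=E_H(S)$ with $0\ne E\ne I$. Because $\mathcal{M}$ is a factor, either $E\precsim I-E$ or $I-E\precsim E$. Replacing $E$ by $I-E$ if necessary, we may assume $E\precsim I-E$, with $E$ a nonzero spectral projection of $H$.

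Next, choose a maximal abelian self-adjoint subalgebra $\mathcal{A}$ of $\mathcal{M}$ containing $W^*(H)$. We want \Cref{thm masa} to apply, which in the semifinite properly infinite case requires $I=\bigvee\{Q\in\mathcal{A}\colon Q~\text{finite}\}$. To secure this, note that $W^*(H)$ is generated by its spectral projections and is abelian. In a type $\mathrm{I}_\infty$ or $\mathrm{II}_\infty$ factor, I will decompose each atom of $W^*(H)$, and more generally each spectral piece, into countably many finite projections. Concretely, I enlarge $W^*(H)$ to an abelian algebra generated by $W^*(H)$ and a partition of $I$ into finite projections commuting with $H$, and then take a masa $\mathcal{A}$ containing that algebra. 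Such a partition exists because each spectral projection $E_H(S_k)$ of a partition $\{S_k\}$ of $\sigma(H)$ into Borel pieces can be written as a sum of finite projections, and these commute with $H$ as subprojections of the $E_H(S_k)$ once $S_k$ are chosen so that $H$ is nearly constant. To avoid that approximation, it is cleaner to take a masa of $E_H(S_k)\mathcal{M}E_H(S_k)$ containing $W^*(H)E_H(S_k)$ together with a partition into finite projections; such a masa exists since any abelian algebra lies in a masa and a masa of the reduced semifinite factor can be chosen with finite projections summing to the identity. Gluing these masas over $k$ gives $\mathcal{A}$.

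This gluing step is the main technical point. It is guaranteed by the standard fact that every abelian von Neumann subalgebra of a semifinite properly infinite factor is contained in a masa generated by finite projections, because commuting finite-trace projections can be added via the trace. If it proves delicate, a fallback is to use \Cref{prop U*PU-Pj} directly, with $P_0=E$ and $P_j$ a partition of $I-E$ into spectral pieces refined by finite subprojections commuting with $H$.

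With $\mathcal{A}$ in hand, \Cref{thm masa} (the hypotheses hold in each type case, using $\mathcal{G}(\mathcal{M})=0$ for $\mathrm{II}_1$) gives a unitary $U$ with $\mathcal{M}=W^*(U^*EU,\mathcal{A}(I-E))$. Since $\mathcal{A}(I-E)$ is a separable abelian algebra, it is generated by a single self-adjoint operator $K_0$. Put
\begin{equation*}
K=U^*EU+3I+K_0',
\end{equation*}
where $K_0'\in\mathcal{A}(I-E)$ is a self-adjoint generator of $\mathcal{A}(I-E)$ with $0\le K_0'\le I-E$ and $\ker K_0'\supseteq E$. I would then try to recover $U^*EU$ and $K_0'$ from $W^*(H,K)$. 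Here $E\in W^*(H)$, and $(I-E)K(I-E)=(I-E)U^*EU(I-E)+3(I-E)+K_0'$, which mixes terms, so this direct recovery is unclear. A simpler choice is $T=H+iU^*EU$: then $W^*(T)$ contains $H$, hence $E$ and $W^*(H)$. But $\mathcal{A}(I-E)$ need not lie in $W^*(H)$. So I instead take $\mathcal{A}$ to be a masa containing $W^*(H)$, use the theorem with $\mathcal{A}(I-E)$, and set
\begin{equation*}
K=U^*EU+\tfrac{1}{4}K_0,
\end{equation*}
with $0\le K_0\le I-E$ a generator of $\mathcal{A}(I-E)$. Then $E\in W^*(H)$ gives $EKE=EU^*EUE$ and $(I-E)K(I-E)$ inside $W^*(T)$, but separating the two summands is again the obstacle. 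I expect to resolve it by a spectral-gap argument, scaling $K_0$ so that the relevant operators have disjoint spectra and can be separated by functional calculus, after first passing to $W^*(H)$ being generated in the relevant corners.
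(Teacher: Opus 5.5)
Your necessity argument and the choice of a nonzero spectral projection $E$ of $H$ with $E\precsim I-E$ are fine. The sufficiency argument, however, has a genuine gap: it stops exactly where the single generator has to be produced. From $\mathcal{M}=W^*(U^*EU,\mathcal{A}(I-E))$ you try to pack both $U^*EU$ and a generator $K_0$ of $\mathcal{A}(I-E)$ into one self-adjoint $K$, and you leave the separation as a hope. The proposed spectral-gap/functional-calculus fix does not apply. The two summands $(I-E)U^*EU(I-E)$ and $K_0$ of $(I-E)K(I-E)$ do not commute, so rescaling $K_0$ gives no way to isolate either of them. Your fallback via \Cref{prop U*PU-Pj} has the same defect, since whatever does not lie in $W^*(H)$ still has to be packed, together with $U^*EU$, into one self-adjoint operator.

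The missing idea is to conjugate by $U$ instead of encoding $U^*EU$. Applying the automorphism $X\mapsto UXU^*$ of $\mathcal{M}$ gives $\mathcal{M}=W^*(E,U\mathcal{A}(I-E)U^*)$. Now $W^*(U\mathcal{A}(I-E)U^*)$ is an abelian von Neumann algebra with separable predual, so it equals $W^*(K)$ for a single positive $K$. Since $E\in W^*(H)$, you get $W^*(H,K)=\mathcal{M}$, and $H+iK$ is a generator with real part $H$. This is precisely the paper's proof: the projection is supplied by $H$, and the conjugated abelian algebra is placed in the imaginary part. Your ``simpler choice'' $H+iU^*EU$ had these roles reversed.

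There is also a second error: the ``standard fact'' you invoke to get a masa $\mathcal{A}\supseteq W^*(H)$ satisfying condition $(3)$ of \Cref{thm masa} is false. In the type $\mathrm{I}_\infty$ factor $\mathcal{B}(L^2[0,1])$, take $H=M_x$. Then $W^*(H)=L^\infty[0,1]$ is already a diffuse masa, so the only masa containing it has no nonzero finite projections. In fact no nonzero finite projection commutes with $H$ at all, so your gluing step cannot be carried out in general. It is also unnecessary, because the argument only uses $E\in\mathcal{A}$. That is easy to arrange: write $E$ and $I-E$ as sums of finite projections and take a masa containing all of them, which is what the paper does.
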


\begin{proof}
    The sufficiency is clear and we will prove the necessity.
    Let $A$ be a self-adjoint operator in $\mathcal{M}$ which is not a scalar.
    Then it has a nonzero spectral projection $P$ with $P\precsim I-P$.
    Let $\mathcal{A}$ be a maximal abelian self-adjoint subalgebra of $\mathcal{M}$ containing $P$ such that
    \begin{equation*}
        I=\bigvee\{Q\in\mathcal{A}\colon Q~\text{is a finite projection in}~\mathcal{M}\}
    \end{equation*}
    if $\mathcal{M}$ is of type $\mathrm{I}_\infty$ or type $\mathrm{II}_\infty$.
    By \Cref{thm masa}, there exists a unitary operator $U$ in $\mathcal{M}$ such that
    \begin{equation*}
        \mathcal{M}=W^*(U^*PU,\mathcal{A}(I-P))=W^*(P,U\mathcal{A}(I-P)U^*).
    \end{equation*}
    Let $B$ be a positive operator in $\mathcal{M}$ such that $W^*(B)=W^*(U\mathcal{A}(I-P)U^*)$.
    It is clear that $\mathcal{M}=W^*(A,B)$.
    This completes the proof.
\end{proof}

\begin{remark}\label{rem FT}
    By using \Cref{prop U*PU-Pj}, we may require that the operator $B$ has at most countable spectrum in the proof of \Cref{prop FT}.
\end{remark}

The following result is a direct consequence of \Cref{prop FT}.

\begin{corollary}\label{cor FT}
    Let $\mathcal{M}$ be a separable factor with $\dim\mathcal{M}>1$.
    Then a self-adjoint operator in $\mathcal{M}$ is the real part of an irreducible operator if and only if it is not a scalar multiple of the identity.
\end{corollary}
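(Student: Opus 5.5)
Necessity is immediate: if $A=\lambda I$, then $\operatorname{Re}T=A$ gives $T=\lambda I+i\operatorname{Im}T$. Hence $W^*(T)=W^*(\operatorname{Im}T)$ is abelian, so $W^*(T)'\cap\mathcal{M}\supseteq W^*(T)$. Since $\dim\mathcal{M}>1$, a factor is not abelian, and so this relative commutant can only be $\mathbb{C}I$ if $W^*(\operatorname{Im}T)=\mathbb{C}I$. That would make $T$ itself a scalar. But then $\{T\}'\cap\mathcal{M}=\mathcal{M}\neq\mathbb{C}I$, contradicting irreducibility.

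For sufficiency, I would split into two cases according to whether $\mathcal{M}$ is covered by \Cref{prop FT}. If $\mathcal{M}$ is not of type $\mathrm{II}_1$, or is of type $\mathrm{II}_1$ with $\mathcal{G}(\mathcal{M})=0$, then \Cref{prop FT} gives a self-adjoint $B$ with $\mathcal{M}=W^*(A,B)$. Put $T=A+iB$. Then $W^*(T)=\mathcal{M}$, so $W^*(T)'\cap\mathcal{M}$ is the center of $\mathcal{M}$, which is $\mathbb{C}I$. Thus $T$ is irreducible with real part $A$.

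The main obstacle is a general type $\mathrm{II}_1$ factor, where \Cref{prop FT} is not available. Here I would not aim for a generator, only for an irreducible subalgebra. I would run the argument of \Cref{prop FT} with \Cref{prop U*PU-Pj} replaced by \Cref{lem sum=infty}, which needs no assumption on $\mathcal{G}$. Since $A$ is not a scalar, it has a nonzero spectral projection $P$ with $P\precsim I-P$, so $\tau(P)\leqslant\frac12$. Let $\mathcal{A}$ be a masa of $\mathcal{M}$ containing $A$. Inside $\mathcal{A}(I-P)$ I choose infinitely many mutually orthogonal nonzero projections $P_j$ summing to $I-P$, each with $\tau(P_j)\leqslant\tau(P)$; this is possible by \eqref{equ masa-II1} applied to the corner, together with finite splitting of atoms if necessary.

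The difficulty is that \Cref{lem key-construction} requires positive operators $A_j$ in $P\mathcal{M}P$ with $W_0^*(\{A_j\})=P\mathcal{M}P$ and $R(A_j)\sim P_j$. Taking $\{A_j\}$ countable with $\sum_j\tau(R(A_j))=\infty$ is exactly the situation of \Cref{cor sum=infty}, applied in the $\mathrm{II}_1$ factor $P\mathcal{M}P$ with the traces rescaled. To meet the divergence hypothesis, I would first split each $P_j$ further into pieces of trace bounded below, so that there are infinitely many pieces whose traces sum to infinity after normalization by $\tau(P)$. \Cref{lem key-construction} then yields a unitary $U$ with $\mathcal{M}=W^*(U^*PU,\{P_j\})$, and hence $\mathcal{M}=W^*(P,U\mathcal{A}(I-P)U^*)$. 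Choosing a positive $B$ generating $U\mathcal{A}(I-P)U^*$ as a von Neumann algebra gives $\mathcal{M}=W^*(A,B)$. So $T=A+iB$ is even a generator, and in particular irreducible. If the splitting bookkeeping fails, my fallback is to note that only irreducibility is needed: it suffices that $W^*(A,B)'\cap\mathcal{M}\subseteq\mathcal{A}'\cap\mathcal{M}=\mathcal{A}$, and then to kill the remaining commutant inside $\mathcal{A}$ by a suitable choice of $B$.
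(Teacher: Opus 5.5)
Your handling of the scalar direction, and of the factors covered by \Cref{prop FT}, is correct. The general type $\mathrm{II}_1$ case, however, has a genuine gap.

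The divergence hypothesis of \Cref{cor sum=infty} can never be met in your setup. However you split $I-P$ into mutually orthogonal pieces $P_j$, you get $\sum_j\tau(R(A_j))=\sum_j\tau(P_j)=\tau(I-P)=1-\tau(P)$. After normalizing by $\tau(P)$ this sum is $(1-\tau(P))/\tau(P)<\infty$, since a finite factor has no infinite orthogonal family of projections with traces bounded below. What \Cref{lem key-construction} actually needs here is the finite-sum statement \Cref{lem sum=1} for $P\mathcal{M}P$. The paper proves that only under $\mathcal{G}(P\mathcal{M}P)=\tau(P)^{-2}\mathcal{G}(\mathcal{M})=0$, which is the case you already covered.

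No cleverer splitting can fix this. Your primary route ends with $\mathcal{M}=W^*(A,B)$, so it would show that every separable $\mathrm{II}_1$ factor is singly generated. That is Kadison's generator problem, which is open.

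The fallback does not close the gap either. The inclusion $W^*(A,B)'\cap\mathcal{M}\subseteq\mathcal{A}$ requires $\mathcal{A}\subseteq W^*(A,B)$, yet $W^*(A)$ may be tiny (e.g.\ $A$ a projection). You give no construction of a single $B$ that both generates $\mathcal{A}$ together with $A$ and commutes with no nontrivial projection of $\mathcal{A}$. A masa, whose relative commutant is itself, is the wrong intermediate object.

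The missing idea is to aim for an irreducible \emph{subfactor} to which \Cref{prop FT} does apply:
\begin{enumerate}
\item By \cite[Corollary 4.1]{Pop81}, $\mathcal{M}$ contains an irreducible hyperfinite subfactor $\mathcal{R}$, and $\mathcal{G}(\mathcal{R})=0$ by \cite[Theorem 3.4]{Shen09}.
\item Take a nontrivial spectral projection $P$ of $A$. Since $\mathcal{R}$ contains projections of every trace, and projections of equal trace in $\mathcal{M}$ are unitarily equivalent, you may replace $\mathcal{R}$ by a unitary conjugate and assume $P\in\mathcal{R}$.
\item Applying \Cref{prop FT} inside $\mathcal{R}$ yields a self-adjoint $B\in\mathcal{R}$ with $\mathcal{R}=W^*(P,B)\subseteq W^*(A,B)=W^*(A+iB)$.
\item Hence $W^*(A+iB)'\cap\mathcal{M}\subseteq\mathcal{R}'\cap\mathcal{M}=\mathbb{C}I$.
\end{enumerate}
This is exactly the paper's argument. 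It gives irreducibility of $A+iB$ without claiming that $\mathcal{M}$ is singly generated.
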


\begin{proof}
    Note that every generator of $\mathcal{M}$ is irreducible.
    By \Cref{prop FT}, we may assume that $\mathcal{M}$ is of type $\mathrm{II}_1$.
    Then $\mathcal{M}$ contains an irreducible hyperfinite subfactor $\mathcal{R}$ by \cite[Corollary 4.1]{Pop81}.
    Let $A$ be a self-adjoint operator in $\mathcal{M}$ which is not a scalar and $P$ a nontrivial spectral projection of $A$.
    It is clear that $P\in U^*\mathcal{R}U$ for some unitary operator $U$ in $\mathcal{M}$.
    Without loss of generality, we may assume that $P\in\mathcal{R}$.
    Since $\mathcal{G}(\mathcal{R})=0$ by \cite[Theorem 3.4]{Shen09}, there exists an self-adjoint operator $B$ in $\mathcal{R}$ such that $\mathcal{R}=W^*(P,B)$ by \Cref{prop FT}, and hence $\mathcal{R}\subseteq W^*(A,B)$.
    Therefore, $W^*(A,B)$ is an irreducible subfactor of $\mathcal{M}$.
    This completes the proof.
\end{proof}

The following corollary is a refined version of \Cref{lem sum=1}, which asserts that any decomposition of $1$ into positive scalars no more than $\frac{1}{2}$ can be realized as the traces of projections that generates $\mathcal{M}$ under the assumption $\mathcal{G}(\mathcal{M})=0$.

\begin{corollary}\label{cor sum=1}
    Let $\mathcal{M}$ be a separable type $\mathrm{II}_1$ factor with $\mathcal{G}(\mathcal{M})=0$.
    Suppose that $\{t_n\}_{n=0}^{N}$ is a sequence of positive scalars such that $1=\sum_{n=0}^{N}t_n$ and
    \begin{equation*}
        t_n\leqslant t_0\leqslant\frac{1}{2}\quad\text{for every}~n\geqslant 1,
    \end{equation*}
    where $2\leqslant N\leqslant\infty$.
    Then there exists a sequence of projections $\{P_n\}_{n=0}^{N}$ in $\mathcal{M}$ such that
    \begin{enumerate} [label= $(\arabic*)$, ref= \arabic*, leftmargin=*] 
        \item[$(1)$] $\tau(P_n)=t_n$ for every $n\geqslant 0$;
        \item[$(2)$] the projections $\{P_n\}_{n=1}^{N}$ are mutually orthogonal;
        \item[$(3)$] $\mathcal{M}=W^*(\{P_n\}_{n=0}^{N})$.
    \end{enumerate}
\end{corollary}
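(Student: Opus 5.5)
The idea is to apply \Cref{prop U*PU-Pj} directly to a partition of $I$ with the prescribed traces. In a type $\mathrm{II}_1$ factor, projections with equal trace are equivalent, and a partition of $I$ into projections of any prescribed traces summing to $1$ always exists. So first fix mutually orthogonal projections $\{Q_n\}_{n=0}^{N}$ in $\mathcal{M}$ with $I=\sum_{n=0}^{N}Q_n$ and $\tau(Q_n)=t_n$. One way to get them is to take a diffuse maximal abelian self-adjoint subalgebra with the family $\{E_t\}$ from \eqref{equ masa-II1} and set $Q_n$ equal to successive differences of the $E_t$; this works for both finite and infinite $N$ because $\sum t_n=1$. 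All $t_n>0$, so every $Q_n$ is nonzero.

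Next, check the hypotheses of \Cref{prop U*PU-Pj}. Condition $(1)$ holds because $I=\sum_{n=0}^{N}Q_n$ and $2\leqslant N\leqslant\infty$. For condition $(2)$, the inequality $t_n\leqslant t_0$ gives $\tau(Q_n)\leqslant\tau(Q_0)$, so $Q_n\precsim Q_0$. The inequality $t_0\leqslant\frac12$ gives $\tau(Q_0)\leqslant 1-\tau(Q_0)=\tau(I-Q_0)$, so $Q_0\precsim I-Q_0$. Here I use that comparison of projections in a $\mathrm{II}_1$ factor is governed by the trace. The assumption $\mathcal{G}(\mathcal{M})=0$ is exactly the extra hypothesis \Cref{prop U*PU-Pj} requires in the type $\mathrm{II}_1$ case.

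\Cref{prop U*PU-Pj} then yields a unitary $U$ with $\mathcal{M}=W^*(U^*Q_0U,\{Q_n\}_{n=1}^{N})$. Set $P_0=U^*Q_0U$ and $P_n=Q_n$ for $n\geqslant 1$. Then $\tau(P_0)=\tau(Q_0)=t_0$ by unitary invariance of the trace, and $\tau(P_n)=t_n$ for $n\geqslant1$, which is $(1)$. The $P_n$ for $n\geqslant 1$ are mutually orthogonal by construction, which is $(2)$. The generation statement $(3)$ is precisely the conclusion of \Cref{prop U*PU-Pj}.

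There is no real obstacle once \Cref{prop U*PU-Pj} is available. The only points needing care are the existence of the initial partition when $N=\infty$, which the diffuse masa handles, and the translation of the trace inequalities into the comparison relations $\precsim$.
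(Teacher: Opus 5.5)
Your proposal is correct and follows the paper's proof: take a partition $\{Q_n\}_{n=0}^{N}$ of $I$ with $\tau(Q_n)=t_n$, apply \Cref{prop U*PU-Pj}, and set $P_0=U^*Q_0U$ and $P_n=Q_n$ for $n\geqslant 1$. Your extra checks (building the partition from a diffuse masa when $N=\infty$, and turning the trace inequalities into $\precsim$) just spell out steps the paper leaves implicit.
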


\begin{proof}
    Let $\{Q_n\}_{n=0}^{N}$ be a sequence of projections in $\mathcal{M}$ such that $I=\sum_{n=0}^{N}Q_n$ and $\tau(Q_n)=t_n$ for every $n\geqslant 0$.
    By \Cref{prop U*PU-Pj}, there exists a unitary operator $U$ in $\mathcal{M}$ such that
    \begin{equation*}
        \mathcal{M}=W^*(U^*Q_0U,\{Q_n\}_{n=1}^{N}).
    \end{equation*}
    Let $P_0=U^*Q_0U$ and $P_n=Q_n$ for every $n\geqslant 1$.
    Then $\{P_n\}_{n=0}^{N}$ has the desired properties.
\end{proof}

For every real number $x$, the ceiling function $\lceil x\rceil$ is the least integer greater than or equal to $x$.
We provided the best possible integers $m_1$ and $m_2$ in \Cref{lem 1=t1+t2} as follows.

\begin{corollary}\label{cor best-m1-m2}
    Let $(\mathcal{M},\tau)$ be a separable type $\mathrm{II}_1$ factor with $\mathcal{G}(\mathcal{M})=0$.
    Suppose that $0<t\leqslant\frac{1}{2}$ and $m=\max\{2,\lceil t^{-1}\rceil-1\}$.
    Then there are finitely many projections $\{P\}\cup\{Q_j\}_{j=1}^{m}$ in $\mathcal{M}$ such that
    \begin{enumerate} [label= $(\arabic*)$, ref= \arabic*, leftmargin=*] 
        \item[$(1)$] $t=\tau(P)$;
        
        \item[$(2)$] $Q_iQ_j=0$ for $i\ne j$, and $1-t=\sum_{j=1}^{m}\tau(Q_j)$;
        
        \item[$(3)$] $\mathcal{M}=W^*(P,\{Q_j\}_{j=1}^{m})$.
    \end{enumerate}
\end{corollary}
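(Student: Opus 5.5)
The plan is to deduce this from \Cref{cor sum=1}, or equivalently from \Cref{prop U*PU-Pj} applied to a suitable partition of $I$. Set $t_0=t$. We will split $1-t$ into $m$ positive numbers $t_1,\dots,t_m$ with $t_j\leqslant t$ for all $j\geqslant 1$. Once this splitting exists, \Cref{cor sum=1} (with $N=m\geqslant 2$) gives projections $P_0,P_1,\dots,P_m$ with $\tau(P_j)=t_j$, with $P_1,\dots,P_m$ mutually orthogonal, and with $\mathcal{M}=W^*(\{P_j\}_{j=0}^m)$. Taking $P=P_0$ and $Q_j=P_j$ then gives $(1)$–$(3)$ directly.

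The only real work is the numerical splitting. We need $m\geqslant 2$, because \Cref{prop U*PU-Pj} requires $N\geqslant 2$. We also need $m t\geqslant 1-t$, i.e.\ $(m+1)t\geqslant 1$, so that $1-t$ can be divided into $m$ pieces each at most $t$.

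\textbf{Case $t=\frac12$.} Here $m=2$, and we take $t_1=t_2=\frac14\leqslant t$.

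\textbf{Case $t<\frac12$.} Let $k=\lceil t^{-1}\rceil$, so that $kt\geqslant 1$. Then $m\geqslant k-1$, hence $(m+1)t\geqslant kt\geqslant 1$. Take the equal splitting $t_j=\frac{1-t}{m}$. This is positive, and it is at most $t$ precisely because $(m+1)t\geqslant 1$. The remaining hypotheses $t_0\leqslant\frac12$ and $\sum_{j=0}^m t_j=1$ hold automatically.

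Once this arithmetic check is done, the rest follows from the cited corollary. The minimality of $m$ (``best possible'') is not part of the statement as written. It would follow from \Cref{rem U*PU-Pj}, which forces $\tau(Q_j)\leqslant\tau(P)$ and $m\geqslant 2$, and hence $m\geqslant\max\{2,\lceil t^{-1}\rceil-1\}$. I would mention this only as a remark.
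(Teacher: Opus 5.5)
Your proposal is correct and is essentially the paper's proof: set $t_0=t$ and $t_j=\frac{1-t}{m}$ for $1\leqslant j\leqslant m$, check $t_j\leqslant t_0\leqslant\frac{1}{2}$ via $(m+1)t\geqslant\lceil t^{-1}\rceil t\geqslant 1$, and apply \Cref{cor sum=1} with $N=m\geqslant 2$. Your separate case $t=\frac{1}{2}$ is already covered by the equal splitting, since then $\frac{1-t}{m}=\frac{1}{4}$.
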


\begin{proof}
    Let $t_0=t$ and $t_j=\frac{1-t}{m}$ for $1\leqslant j\leqslant m$.
    It is clear that
    \begin{equation*}
        t_j\leqslant t_0\leqslant\frac{1}{2}\quad\text{for every}~1\leqslant j\leqslant m.
    \end{equation*}
    We complete the proof by \Cref{cor sum=1}.
\end{proof}

\begin{remark}\label{rem best-m1-m2}
    Note that the constant $m$ is sharp by \Cref{rem U*PU-Pj}.
    Moreover, results similar to \Cref{cor sum=1} and \Cref{cor best-m1-m2} hold for the matrix algebra $M_n(\mathbb{C})$ provided that $t=\frac{k}{n}$ for some positive integer $k$ with $1\leqslant k\leqslant\frac{n}{2}$.
\end{remark}

\section{Similarity of irreducible operators}\label{sec similarity}

In this section, we provide a proof of \Cref{thm similarity}, the main result in this paper.
For every subset $\mathcal{S}$ of $\mathcal{M}$, denote by $\mathcal{S}^c$ its \emph{relative commutant} $\mathcal{S}'\cap\mathcal{M}$.
The \emph{relative bicommutant} of $\mathcal{S}$ is the set $\mathcal{S}^{cc}=(\mathcal{S}^c)^c$ and it is clear that $\mathcal{S}\subseteq\mathcal{S}^{cc}$.
An operator $T$ in $\mathcal{M}$ is said to be \emph{strongly reducible} if $XTX^{-1}$ is reducible for every invertible operator $X$ in $\mathcal{M}$.
For example, in a properly infinite semifinite factor, every operator with finite range projection must be strongly reducible.
For completeness, we sketch a proof of the following lemma, which is essentially proved in \cite[Lemma 2.1]{FJ94}.

\begin{lemma}\label{lem bicommutant}
    Let $\mathcal{M}$ be a separable factor and $T$ a strongly reducible operator in $\mathcal{M}$.
    Then every operator in $\{T\}^{cc}$ is strongly reducible in $\mathcal{M}$.
\end{lemma}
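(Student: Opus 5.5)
Fix $S\in\{T\}^{cc}$ and an invertible $X\in\mathcal{M}$. We must show that $XSX^{-1}$ is reducible, i.e.\ that some projection $Q\in\mathcal{M}\setminus\mathbb{C}I$ commutes with $XSX^{-1}$ and $(XSX^{-1})^*$. The plan is to reduce to the case $X=I$, using that relative bicommutants transform naturally under similarity.

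\textbf{Step 1: reduce to $X=I$.} Since $\operatorname{Ad}X\colon Y\mapsto XYX^{-1}$ is an algebra automorphism of $\mathcal{M}$, it maps commutants in $\mathcal{M}$ to commutants. Hence
\begin{equation*}
    \{XTX^{-1}\}^c=X\{T\}^cX^{-1}\quad\text{and}\quad\{XTX^{-1}\}^{cc}=X\{T\}^{cc}X^{-1}.
\end{equation*}
So $XSX^{-1}\in\{XTX^{-1}\}^{cc}$. Moreover $XTX^{-1}$ is again strongly reducible, because a similarity of $XTX^{-1}$ is a similarity of $T$. It therefore suffices to prove: if $T$ is strongly reducible and $S\in\{T\}^{cc}$, then $S$ is reducible.

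\textbf{Step 2: find a nontrivial idempotent in $\{T\}^c$.} Since $T$ is reducible, there is a nontrivial projection $P\in\mathcal{M}$ commuting with $T$ and $T^*$. Then $P\in\{T\}^c$, and $S$ commutes with $P$. If $S^*$ also commutes with $P$, we are done. In general, however, $\{T\}^c$ is not self-adjoint, so only $SP=PS$ is guaranteed. This is the main obstacle.

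\textbf{Step 3: handle the non-self-adjointness, following \cite[Lemma 2.1]{FJ94}.} The key fact is that a nontrivial idempotent $E$ commuting with $S$ yields a similarity $Y$ such that $YSY^{-1}$ is reducible. Concretely, let $F=R(E)$. Then $E$ is similar to $F$ via an invertible $Y\in\mathcal{M}$ built from the $2\times2$ decomposition relative to $F$. Indeed, writing
\begin{equation*}
    E=\begin{pmatrix} I & B\\ 0 & 0\end{pmatrix},
\end{equation*}
one may take
\begin{equation*}
    Y=\begin{pmatrix} I & B\\ 0 & I\end{pmatrix}.
\end{equation*}
Then $YSY^{-1}$ commutes with the projection $F$, and an operator commuting with a projection is block diagonal, so $F$ reduces it. Thus $S$ is similar to a reducible operator.

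To get reducibility of $S$ itself, rather than of a similarity of it, I will instead apply strong reducibility to the similar operator $Y^{-1}TY$. This operator is reducible by a projection $P'$. Then $YP'Y^{-1}$ is an idempotent in $\{T\}^c$, so it commutes with $S$. Hence $P'$ commutes with $Y^{-1}SY$, with the same issue. So the cleaner route is to choose $Y$ well.

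Take an idempotent $E\in\{T\}^c$ coming from Step 2. With $F=R(E)$ and $Y$ as above, $E=Y^{-1}FY$. Since $E\in\{T\}^c$, $F$ commutes with $YTY^{-1}$. Now $YTY^{-1}$ has $F$ as an invariant-and-complement idempotent, but the previous trick requires the operator itself to commute with $F^*=F$, which holds. Hence $F$ commutes with $YTY^{-1}$ and, by the bicommutant relation of Step 1, $F$ commutes with $YSY^{-1}$. Since $F$ is a projection, $F$ also commutes with $(YSY^{-1})^*$, so $YSY^{-1}$ is reducible.

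\textbf{Step 4: conclude.} Applying Step 3 for the operator $X'TX'^{-1}$ with arbitrary $X'$ shows that every similarity of $S$ is reducible. For instance, to treat $XSX^{-1}$, run Step 3 with $T$ replaced by the strongly reducible operator $XTX^{-1}$, and absorb the resulting $Y$. This yields that $S$ is strongly reducible.

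The delicate point is the order of quantifiers: for each target similarity $XSX^{-1}$ we must produce a projection reducing that specific operator. To do this, I would apply strong reducibility of $T$ to $XTX^{-1}$ directly. This gives a projection $P$ in $\{XTX^{-1}\}^c$ that reduces $XTX^{-1}$, and $P$ is also in the relative commutant of $(XTX^{-1})^*$. The idea is then to use that $P$ commutes with $XSX^{-1}$ via Step 1.

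The remaining gap is $(XSX^{-1})^*$. To close it, one checks that $P$ lies in $\{XTX^{-1}\}^c\cap\{(XTX^{-1})^*\}^c$. Note that $(XSX^{-1})^*$ lies in the bicommutant of $(XTX^{-1})^*$, because taking adjoints commutes with forming relative commutants. Hence $P$, which commutes with $(XTX^{-1})^*$, commutes with $(XSX^{-1})^*$, and $P$ reduces $XSX^{-1}$. This observation actually makes Step 3 unnecessary, and is the argument I would write.
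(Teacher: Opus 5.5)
Your final argument is correct and is essentially the paper's proof. You apply strong reducibility to $XTX^{-1}$ to get a nontrivial projection $P$, and use that $\operatorname{Ad}X$ transports $\{T\}^c$ and $\{T\}^{cc}$; the paper phrases the same step as $X^{-1}PX\in\{T\}^c$, so $X^{-1}PX$ commutes with the given element of $\{T\}^{cc}$.

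The ``main obstacle'' you raise in Step 2 is not an obstacle: since $P=P^*$, taking adjoints in $PS=SP$ gives $PS^*=S^*P$. So Steps 1--2 already finish the proof, and both the idempotent detour of Step 3 and the adjoint-bicommutant argument at the end are unnecessary.
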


\begin{proof}
    Let $A$ be an operator in $\{T\}^{cc}$ and $X$ an invertible operator in $\mathcal{M}$.
    We claim that $XAX^{-1}$ is reducible.
    By the strong reducibility of $T$, there exists a nontrivial projection $P$ in $\mathcal{M}$ commuting with $XTX^{-1}$.
    Thus, $X^{-1}PX\in\{T\}^c$. This implies that $X^{-1}PX$ commutes with $A\in\{T\}^{cc}$, i.e., $P$ commutes with $XAX^{-1}$.
\end{proof}

Let $T$ and $X$ be operators in $\mathcal{B}(\mathcal{H})$ with $X$ invertible.
It is well-known that $T$ and $XTX^{-1}$ have the same rank and co-rank.
In other words, their range projections $R(T)$ and $R(XTX^{-1})$ are unitarily equivalent.
For the reader's convenience, we sketch a brief proof of this result in general von Neumann algebras, which will be applied in the proof of \Cref{lem main-pre}.

\begin{lemma}\label{lem RP-similarity}
    Let $\mathcal{M}$ be a von Neumann algebra, $T\in\mathcal{M}$, and $X$ an invertible operator in $\mathcal{M}$.
    Then there is a unitary operator $U$ in $\mathcal{M}$ such that $R(XTX^{-1})=U^*R(T)U$.
\end{lemma}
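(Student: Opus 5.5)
To find $U$, the natural route is to show that $R(XTX^{-1})$ and $R(T)$ are equivalent and that their complements are equivalent as well. Two projections $E,F$ in a von Neumann algebra with $E\sim F$ and $I-E\sim I-F$ are unitarily equivalent. Indeed, if $V$ and $W$ are partial isometries with $V^*V=E$, $VV^*=F$, $W^*W=I-E$, $WW^*=I-F$, then $U_0=V+W$ is a unitary with $U_0EU_0^*=F$. So it suffices to prove these two equivalences, taking $U=U_0^*$ with $E=R(T)$ and $F=R(XTX^{-1})$.

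First I will show $R(XTX^{-1})\sim R(T)$. Since $X^{-1}$ is invertible, $XTX^{-1}$ and $XT$ have the same range, so $R(XTX^{-1})=R(XT)$. For any $S\in\mathcal{M}$, the polar decomposition $S=V|S|$ gives $R(S)\sim R(S^*)$, since $V^*V=R(S^*)$ and $VV^*=R(S)$ (cf. \cite[Proposition 6.1.6]{KR2}). Applying this, $R(XT)\sim R(T^*X^*)$. Because $X^*$ is invertible, $T^*X^*$ and $T^*$ have the same range, so $R(T^*X^*)=R(T^*)$. Finally $R(T^*)\sim R(T)$ by the same polar decomposition fact. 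Chaining these gives $R(XTX^{-1})\sim R(T)$.

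Next I will show the complements are equivalent. Here $I-R(S)$ is the kernel projection $N(S^*)$. For $S=XTX^{-1}$ we have $S^*=(X^*)^{-1}T^*X^*$, so its kernel is $\ker S^*=\ker(T^*X^*)=(X^*)^{-1}\ker T^*$. Thus the subspace $\ker S^*$ is the image of $\ker T^*$ under the invertible operator $Y=(X^*)^{-1}\in\mathcal{M}$. Write $Q=N(T^*)=I-R(T)$. Then $Q'=N(S^*)=R(YQ)$, because $Y$ maps the range of $Q$ onto $\ker S^*$. Applying the polar decomposition argument again, $R(YQ)\sim R(QY^*)$. Since $Y^*$ is invertible, $QY^*$ has the same range as $Q$, so $R(QY^*)=Q$. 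Hence $I-R(XTX^{-1})=Q'\sim Q=I-R(T)$.

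Combining the two equivalences with the remark in the first paragraph yields the unitary $U$ with $R(XTX^{-1})=U^*R(T)U$. The only delicate point is the complement equivalence, which is handled by identifying the kernel projection as $R(YQ)$. All the other steps are routine range computations using the invertibility of $X$.
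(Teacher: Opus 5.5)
Your proof is correct and follows essentially the same route as the paper. The equivalence $R(XTX^{-1})=R(XT)\sim R(T^*X^*)=R(T^*)\sim R(T)$ is the same chain. For the complements, both arguments transport a kernel projection by an invertible operator and then apply $R(A)\sim R(A^*)$; the paper goes in the reverse direction, showing $I-R(T)=R(X^*(I-R(XTX^{-1})))$ by range-projection algebra, while you push $\ker T^*$ forward by $(X^*)^{-1}$ at the level of subspaces. You also spell out the standard fact that $E\sim F$ and $I-E\sim I-F$ give a unitary, which the paper uses without comment.
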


\begin{proof}
    Let $P=R(T)$ and $Q=R(XTX^{-1})$.
    It is routine to verify that $Q=R(XP)$ and $P=R(X^{-1}Q)$.
    By Kaplansky's formula \cite[Theorem 6.1.7]{KR2}, we have
    \begin{equation*}
        P\sim R(T^*)=R(T^*X^*)\sim R(XT)=Q.
    \end{equation*}
    Let $P_0=R(X^*(I-Q))$.
    Then $(I-Q)X(I-P_0)=0$, i.e., $I-P_0=X^{-1}QX(I-P_0)$.
    It is clear that
    \begin{equation*}
        I-P_0=R(X^{-1}QX(I-P_0))\leqslant R(X^{-1}Q)=P.
    \end{equation*}
    Note that $A^*B=0$ if and only if $R(A)R(B)=0$ for any operators $A$ and $B$ in $\mathcal{M}$.
    It follows from $(I-Q)XX^{-1}Q=0$ that $P_0P=0$, i.e., $P\leqslant I-P_0$.
    Thus, we have $P=I-P_0$, and hence
    \begin{equation*}
        I-Q=R((I-Q)X)\sim R(X^*(I-Q))=P_0=I-P.
    \end{equation*}
    Therefore, $P$ and $Q$ are unitarily equivalent in $\mathcal{M}$.
\end{proof}

The ``only if'' part of \Cref{thm similarity} relies on the following lemma.

\begin{lemma}\label{lem main-pre}
    Let $\mathcal{M}$ be a separable factor with $\dim\mathcal{M}>4$.
    Suppose that $T$ is an operator in $\mathcal{M}$ satisfying one of the following conditions:
    \begin{enumerate} [label= $(\arabic*)$, ref= \arabic*, leftmargin=*] 
        \item \label{item-r1} $R(\lambda I-T)\prec I-R(\lambda I-T)$ for some scalar $\lambda$;
        \item \label{item-r2} the set $\{I,T,T^2\}$ is linearly dependent over $\mathbb{C}$.
    \end{enumerate}
    Then $T$ is strongly reducible in $\mathcal{M}$.
\end{lemma}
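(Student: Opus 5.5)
We treat the two conditions separately. In both cases the plan is to show that $XTX^{-1}$ is reducible for every invertible $X\in\mathcal{M}$. For condition (1) we argue this directly, using that similarity preserves the relevant invariants. For condition (2) we exhibit a suitable operator in $\{T\}^c$ and either argue directly or invoke \Cref{lem bicommutant}.

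\textbf{Condition (1).} Replace $T$ by $T-\lambda I$, so that $P:=R(T)\prec I-P$. Fix an invertible $X$ and set $S=XTX^{-1}$.
\begin{itemize}
\item By \Cref{lem RP-similarity}, $Q:=R(S)$ is unitarily equivalent to $P$. Hence $Q\prec I-Q$.
\item Likewise $R(S^*)=R((X^*)^{-1}T^*X^*)$ is unitarily equivalent to $R(T^*)$. Kaplansky's formula gives $R(T^*)\sim R(T)$, so $R(S^*)\sim Q$.
\item Let $E=Q\vee R(S^*)$. Then $E$ commutes with $S$: indeed $SE=ESE$ because $R(S)\le E$, and $S(I-E)=0$ because $I-E\le I-R(S^*)=\ker S$. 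So $S=ESE$.
\item It remains to show $E\ne I$. We have $E\precsim Q\oplus R(S^*)$, which is roughly $2Q$.
\end{itemize}
This last point is the main obstacle, since it needs a type-by-type argument.
\begin{itemize}
\item In type $\mathrm{I}_n$ and type $\mathrm{II}_1$, $\tau(E)\le 2\tau(Q)<1$.
\item In properly infinite factors, $Q\prec I-Q$ forces $Q$ to be finite, and then $E$ is finite, so $E\ne I$.
\item Type $\mathrm{III}$ cannot occur, because there $Q\prec I-Q$ forces $Q=0$.
\end{itemize}
If $E=0$ then $S=0$, i.e.\ $T=\lambda I$, which is reducible anyway because $\dim\mathcal{M}>1$. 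Otherwise $E$ is a nontrivial projection commuting with $S$.

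\textbf{Condition (2).} If $T$ is scalar it is reducible for the same reason. Otherwise $T$ satisfies a quadratic polynomial, and we split according to its roots.
\begin{itemize}
\item \emph{Distinct roots:} $(T-\alpha)(T-\beta)=0$. Then $T$ is similar to $\alpha E+\beta(I-E)$ with $E$ an idempotent. Every $XTX^{-1}$ is again of this form, $\alpha F+\beta(I-F)$ with $F$ idempotent.
\item \emph{Repeated root:} after translation $T^2=0$, and every conjugate $XTX^{-1}$ is again square-zero.
\end{itemize}
So it suffices to show that every idempotent $F$ and every square-zero $N$ is reducible in $\mathcal{M}$ when $\dim\mathcal{M}>4$.

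For an idempotent $F$, write $F$ as a $2\times2$ matrix relative to $R(F)$ as $\begin{pmatrix} I & B\\ 0&0\end{pmatrix}$. Then $W^*(F)$ is generated by the projection $R(F)$ together with $B$. For a square-zero $N$ we have $R(N)\le \ker N$, and $W^*(N)$ is generated by the partial isometry and modulus of $N$.

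In both cases we take the polar decomposition of the off-diagonal entry. This shows that $W^*(F)$, respectively $W^*(N)$, is a von Neumann algebra of $2\times2$ matrices over an abelian algebra, with a type $\mathrm{I}_1$ summand possibly added. A factor of this shape with trivial relative commutant would need $\mathcal{M}$ itself to be $M_1$ or $M_2$: $W^*(F)$ is of type $\mathrm{I}$ with homogeneous summands of degree at most $2$, so its commutant in $\mathcal{M}$ is nontrivial unless $\dim\mathcal{M}\le4$. Concretely, $W^*(F)$ has a nontrivial center or a nontrivial relative commutant. For example, a nontrivial spectral projection of $B^*B$ (or of $|N|$), transported to the matching corner, commutes with $F$.

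We handle this by cases:
\begin{itemize}
\item If the off-diagonal part has non-scalar modulus, use its spectral projections in both corners to produce a commuting projection.
\item If the modulus is scalar, $W^*(F)\cong M_2(\mathbb{C})$ or $\mathbb{C}^2$ sits inside $\mathcal{M}$ with relative commutant $\cong pMp\ne\mathbb{C}$, since $\dim\mathcal{M}>4$.
\end{itemize}
The same argument applies to $N$. This completes the proof of reducibility for all conjugates, hence strong reducibility.
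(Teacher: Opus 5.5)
Your proof is correct. For condition (1) it is the paper's argument: $E=R(S)\vee R(S^*)$ commutes with $S$, and one shows $E\neq I$. The only difference is that the paper proves $E\neq I$ uniformly, with no type analysis. Kaplansky's formula gives $E-R(S)\precsim R(S^*)\sim R(S)$, so $E=I$ would force $I-R(S)\precsim R(S)\prec I-R(S)$, contradicting Schr\"oder--Bernstein. Your type-by-type check also works. One correction: type $\mathrm{III}$ does occur, precisely when $T$ is scalar, and it is your $E=0$ case that covers it.

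For condition (2) the routes differ more. The paper does not split into idempotents and square-zero operators. With $P_1=R(S-\lambda_2 I)$ it writes $S=\lambda_1P_1+P_1SP_2+\lambda_2P_2$, which is valid also when $\lambda_1=\lambda_2$. It then peels off the defect projections $P_1-R(P_1SP_2)$ and $P_2-R((P_1SP_2)^*)$, both of which commute with $S$. Next it writes $P_1SP_2=V_{12}H$ with $V_{12}^*V_{12}=P_2$ and $V_{12}V_{12}^*=P_1$. It picks any nontrivial projection $Q_2\in P_2\mathcal{M}P_2$ commuting with $H$, which exists because $\dim\mathcal{M}>4$ forces $P_2\mathcal{M}P_2\neq\mathbb{C}P_2$. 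Then $V_{12}Q_2V_{12}^*+Q_2$ commutes with $S$. This single construction merges your ``scalar modulus'' and ``non-scalar modulus'' subcases.

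Your structural argument is clean and suffices on its own. It says $W^*(F)$ and $W^*(N)$ are type $\mathrm{I}$ with homogeneous summands of degree at most $2$. So either the center is nontrivial, or the algebra is $\mathbb{C}$ or $M_2(\mathbb{C})$, whose relative commutant is nontrivial once $\dim\mathcal{M}>4$.

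Your explicit case list, however, is slightly off. Take $P=R(F)$ and suppose $|B|$ is a nonzero multiple of $I-P$ while $R(B)\neq P$. Then $W^*(F)\cong\mathbb{C}\oplus M_2(\mathbb{C})$, which is neither $M_2(\mathbb{C})$ nor $\mathbb{C}^2$. That case is caught only by your nontrivial-center alternative, via the central projection $P-R(B)$. This is exactly what the paper's defect-projection step handles.
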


\begin{proof}
    Suppose that the condition \eqref{item-r1} holds.
    Without loss of generality, we may assume that $\lambda=0$.
    Then $R(T)\prec I-R(T)$.
    Let $S=XTX^{-1}$, where $X$ is an invertible operator in $\mathcal{M}$.
    It is clear that $R(S)\prec I-R(S)$ by applying \Cref{lem RP-similarity}.
    Let $P_1=R(S)$, $P_2=R(S^*)$, and $P=P_1\vee P_2$.
    By Kaplansky's formula, we have
    \begin{equation*}
        P-P_1\sim(P_2-P_1\wedge P_2)\leqslant P_2\sim P_1
        \prec I-P_1.
    \end{equation*}
    It follows that $P\ne I$.
    If $S=0$, then $S$ is clearly reducible.
    If $S\ne 0$, then $S$ is reducible as $P$ is a nontrivial projection commuting with $S$.
    Therefore, $T$ is strongly reducible.
    
    Suppose that the condition \eqref{item-r2} holds.
    Let $S=XTX^{-1}$, where $X$ is an invertible operator in $\mathcal{M}$.
    Then $\{I,S,S^2\}$ is linearly dependent over $\mathbb{C}$.
    In other words, there are scalars $\lambda_1$ and $\lambda_2$ such that $(S-\lambda_1I)(S-\lambda_2I)=0$.
    Let $P_1=R(S-\lambda_2I)$.
    If $P_1=0$, then $S=\lambda_2I$.
    If $P_1=I$, then $S=\lambda_1I$.
    In both cases, $S$ is clearly reducible.
    Thus, we assume that $P_1$ is a nontrivial projection.
    Let $P_2=I-P_1$.
    It is clear that $(S-\lambda_1I)P_1=0$ and $P_1(S-\lambda_2I)=S-\lambda_2I$, i.e., $SP_1=\lambda_1P_1$ and $P_2S=\lambda_2P_2$.
    Bearing in mind the matrix form of $S$ with respect to $I=P_1+P_2$, we obtain that
    \begin{equation*}
        S=(P_1+P_2)S(P_1+P_2)=\lambda_1P_1+P_1SP_2+\lambda_2P_2.
    \end{equation*}
    It is straightforward to verify that both $P_1-R(P_1SP_2)$ and $P_2-R((P_1SP_2)^*)$ are projections in $\mathcal{M}$ commuting with $S$.
    Hence we may assume that $R(P_1SP_2)=P_1$ and $R((P_1SP_2)^*)=P_2$.
    Let $P_1SP_2=V_{12}H$ be the polar decomposition, where $H$ is a positive operator in $P_2\mathcal{M}P_2$ and $V_{12}$ is a partial isometry in $P_1\mathcal{M}P_2$ satisfying that $V_{12}V_{12}^*=P_1$ and $V_{12}^*V_{12}=P_2$.
    By the assumption $\dim\mathcal{M}>4$, there exists a nontrivial projection $Q_2$ in $P_2\mathcal{M}P_2$ commuting with $H$.
    Let $Q=V_{12}Q_2V_{12}^*+Q_2$ be a nontrivial projection in $\mathcal{M}$.
    Note that $S=\lambda_1P_1+V_{12}H+\lambda_2P_2$ and
    \begin{equation*}
    \begin{split}
        QS &=\lambda_1V_{12}Q_2V_{12}^*+V_{12}HQ_2+\lambda_2Q_2,\\
        SQ &=\lambda_1V_{12}Q_2V_{12}^*+V_{12}Q_2H+\lambda_2Q_2.
    \end{split}
    \end{equation*}
    It follows that $S$ commutes with the nontrivial projection $Q$, and hence $S$ is reducible.
    Therefore, $T$ is strongly reducible.
\end{proof}

We now present a technical construction in upper-triangular matrix form that yields irreducible operators.

\begin{lemma}\label{prop upper-triangular}
    Let $\mathcal{M}$ be a separable factor.
    Suppose that
    \begin{enumerate} [label= $(\arabic*)$, ref= \arabic*, leftmargin=*] 
        \item \label{item-ir-1} $\{\alpha_j\}_{j=1}^3$ are distinct scalars and $\{E_j\}_{j=1}^3$ are nonzero projections in $\mathcal{M}$ such that $I=\sum_{j=1}^{3}E_j$;
        \item \label{item-ir-2} $V_{12}\in E_1\mathcal{M}E_2$ and $V_{13}\in E_1\mathcal{M}E_3$ are partial isometries such that
        \begin{equation*}
            V_{12}V_{12}^*=F_1\leqslant E_1,\quad V_{12}^*V_{12}=E_2,\quad V_{13}V_{13}^*=G_1\leqslant E_1,\quad V_{13}^*V_{13}=E_3;
        \end{equation*}
        \item \label{item-ir-3} $A$ is a positive invertible operator in $F_1\mathcal{M}F_1$;
        \item \label{item-ir-4} $B$ is a positive invertible operator in $G_1\mathcal{M}G_1$;
        \item \label{item-ir-5} $W_0^*(A,B)$ is an irreducible subfactor of $E_1\mathcal{M}E_1$.
    \end{enumerate}
    Let $T:=\sum_{j=1}^{3}\alpha_jE_j+AV_{12}+BV_{13}$, which can be expressed in matrix form 
    \begin{equation}  \label{eq-T-matrix}
        T=
        \begin{pmatrix}
            \alpha_1E_1 & AV_{12} & BV_{13} \\
            0 & \alpha_2E_2 & 0 \\
            0 & 0 & \alpha_3E_3
        \end{pmatrix}
        \begin{matrix}
            \operatorname{ran} E_1 \\
            \operatorname{ran} E_2 \\
            \operatorname{ran} E_3
        \end{matrix}.
    \end{equation}
    Then $T$ is an irreducible operator in $\mathcal{M}$.
\end{lemma}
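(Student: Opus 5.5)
Take a projection $Q\in\mathcal{M}$ commuting with $T$. The plan is to show first that $Q$ commutes with each $E_j$, then to transport the information to the corner $E_1\mathcal{M}E_1$, and there use the irreducibility in condition \eqref{item-ir-5}.

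\textbf{Step 1: $Q$ commutes with each $E_j$.} Write $T=D+K$ with $D=\sum_j\alpha_jE_j$ and $K=AV_{12}+BV_{13}$. Since $K=E_1K(E_2+E_3)$, we have $K^2=0$ and $E_1KE_1=0$. The spectrum of $T$ is $\{\alpha_1,\alpha_2,\alpha_3\}$, because $T$ is upper triangular with scalar diagonal blocks. Moreover $p(T)=0$ for $p(z)=\prod_j(z-\alpha_j)$: in block form, $(T-\alpha_1)(T-\alpha_2)(T-\alpha_3)$ is a product of three upper-triangular matrices whose $j$-th diagonal entries vanish in turn, and one checks the product is $0$. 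The Riesz idempotents
\[
\mathcal{E}_j=\prod_{k\neq j}\frac{T-\alpha_k}{\alpha_j-\alpha_k}
\]
are polynomials in $T$, so they lie in $W^*(T)$-algebra generated by $T$ and commute with $Q$. Computing them gives
\[
\mathcal{E}_1=E_1+\frac{AV_{12}}{\alpha_1-\alpha_2}+\frac{BV_{13}}{\alpha_1-\alpha_3},\qquad
\mathcal{E}_2=E_2-\frac{AV_{12}}{\alpha_1-\alpha_2},\qquad
\mathcal{E}_3=E_3-\frac{BV_{13}}{\alpha_1-\alpha_3}.
\]
Because $Q$ is self-adjoint, it also commutes with each $\mathcal{E}_j^*$. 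The range projection of $\mathcal{E}_1$ is $E_1$, and $\ker\mathcal{E}_2\cap\ker\mathcal{E}_3$-type arguments give the others. More directly, $Q$ commutes with $R(\mathcal{E}_1)=E_1$, and $Q$ commutes with $I-R(\mathcal{E}_j^*)$ for $j=2,3$, since range and kernel projections of operators commuting with $Q$ and $Q^*$ commute with $Q$. One then checks $R(\mathcal{E}_2^*)=E_2$: we have $\mathcal{E}_2^*=E_2-(\cdots)^*$ with $\mathcal{E}_2^*=E_2\mathcal{E}_2^*$ up to the adjoint of an $E_1$-row term, and the correct statement is $\ker\mathcal{E}_2=E_1\oplus E_3$. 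Hence $Q$ commutes with $E_1,E_2,E_3$.

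\textbf{Step 2: reduce to the corner.} Set $Q_j=QE_j$. Then $[Q,T]=0$ together with Step 1 gives $Q_1AV_{12}=AV_{12}Q_2$ and $Q_1BV_{13}=BV_{13}Q_3$. Multiplying by the adjoints yields
\[
Q_1A^2=AV_{12}Q_2V_{12}^*A=A^2Q_1 .
\]
So $Q_1$ commutes with $A^2$, hence with $A$ (where $A$ is extended by $0$ on $E_1-F_1$), and in particular with $F_1=R(A)$. In the same way $Q_1$ commutes with $B$. Condition \eqref{item-ir-5} then forces $Q_1\in\{0,E_1\}$.

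\textbf{Step 3: propagate to $Q_2,Q_3$.} Since $A$ is invertible in $F_1\mathcal{M}F_1$, the identity $Q_1AV_{12}=AV_{12}Q_2$ gives $V_{12}Q_2V_{12}^*=F_1Q_1F_1$. Because $V_{12}^*V_{12}=E_2$, it follows that $Q_2=V_{12}^*Q_1V_{12}$, which is $0$ or $E_2$ according as $Q_1$ is $0$ or $E_1$. The same argument gives $Q_3$. Therefore $Q\in\{0,I\}$ and $T$ is irreducible.

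\textbf{Main obstacle.} The delicate point is Step 1: deducing that $Q$ commutes with each $E_j$ from the commutation of $Q$ with the non-self-adjoint idempotents $\mathcal{E}_j$. The way to handle it is to use the range and kernel projections of $\mathcal{E}_j$ and of $\mathcal{E}_j^*$. Since $Q$ commutes with $\mathcal{E}_1$, it commutes with its range projection $E_1$. Since $\ker\mathcal{E}_1=\operatorname{ran}(\mathcal{E}_2+\mathcal{E}_3)$, one computes $R(\mathcal{E}_j^*)$ for $j=2,3$. From $E_2\mathcal{E}_2=E_2$, together with $\mathcal{E}_2$ vanishing on $E_1\oplus E_3$, we get $R(\mathcal{E}_2^*)=E_2$. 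Hence $Q$ commutes with $E_2$, and likewise with $E_3$.
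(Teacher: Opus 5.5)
Your proof is correct and follows the paper's argument: show $Q$ commutes with each $E_j$, obtain $Q_1A^2=A^2Q_1$ (and likewise for $B$) so that irreducibility of $W_0^*(A,B)$ gives $Q_1\in\{0,E_1\}$, then propagate to $Q_2$ and $Q_3$. Your Step 1 goes beyond the paper, which only asserts the commutation with the $E_j$; you prove it via the Lagrange idempotents $\mathcal{E}_j$ and the range projections $R(\mathcal{E}_1)=E_1$ and $R(\mathcal{E}_j^*)=E_j$ for $j=2,3$, though you should replace the garbled middle of that step with the clean argument from your last paragraph, and your identity $Q_2=V_{12}^*Q_1V_{12}$ stands in for the paper's ``without loss of generality $Q_1=0$''.
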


\begin{proof}
Let $Q$ be a projection in $\mathcal{M}$ commuting with $T$.
Since $\{\alpha_j\}_{j=1}^3$ are distinct scalars, $Q$ commutes with $\{E_j\}_{j=1}^3$.
Let $Q_j=QE_j$ for $1\leqslant j\leqslant 3$.
Then
\begin{equation}\label{equ Q1-Q2-Q3}
  Q_1AV_{12}=AV_{12}Q_2\quad\text{and}\quad Q_1BV_{13}=BV_{13}Q_3.
\end{equation}
Let $P=V_{12}Q_2V_{12}^*\in F_1\mathcal{M}F_1$.
Then $Q_1A=AP$, and hence
\begin{equation*}
  Q_1A^2=APA=A(AP)^*=A(Q_1A)^*=A^2Q_1.
\end{equation*}
It follows that $Q_1A=AQ_1$ as $A$ is a positive operator by the condition \eqref{item-ir-3}.
By a similar argument and the condition \eqref{item-ir-4}, $Q_1$ also commutes with $B$.
It follows that $Q_1=0$ or $E_1$ by the condition \eqref{item-ir-5}.
Without loss of generality, we may assume that $Q_1=0$.
Combining with \eqref{equ Q1-Q2-Q3}, we have
\begin{equation*}
  AV_{12}Q_2=0\quad\text{and}\quad BV_{13}Q_3=0.
\end{equation*}
It follows that $Q_2=0$ and $Q_3=0$.
Hence $Q=0$.
Therefore, $T$ is irreducible in $\mathcal{M}$.
This completes the proof.
\end{proof}

Note that $\sum_{j=1}^{3}\alpha_jE_j$ is similar to the operator $T$ given by \eqref{eq-T-matrix}.
Next, we construct operators $V_{12}$, $V_{13}$, $A$, and $B$ as required in \Cref{prop upper-triangular}.
Recall that up to isomorphism, there is a unique separable hyperfinite type $\mathrm{II}_1$ factor, denoted by $\mathcal{R}$.

\begin{lemma}\label{cor upper-triangular}
    Let $\mathcal{M}$ be a separable factor.
    Suppose that $\{\alpha_j\}_{j=1}^3$ are distinct scalars and $\{E_j\}_{j=1}^3$ are nonzero projections in $\mathcal{M}$ such that
    \begin{enumerate} [label= $(\arabic*)$, ref= \arabic*, leftmargin=*, labelsep = 0.5 em] 
        \item $I=\sum_{j=1}^{3}E_j$;
        \item $E_j\precsim I-E_j$ for every $1\leqslant j\leqslant 3$.
    \end{enumerate}
    Then $\sum_{j=1}^{3}\alpha_jE_j$ is similar to an irreducible operator in $\mathcal{M}$.
\end{lemma}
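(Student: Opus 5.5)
The plan is to reduce to \Cref{prop upper-triangular} after relabelling, and then to build the data $V_{12}$, $V_{13}$, $A$, $B$ inside $E_1\mathcal{M}E_1$.

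\textbf{Reduction.} By the comparison theorem in the factor $\mathcal{M}$, the projections $E_1,E_2,E_3$ are totally ordered under $\precsim$. After relabelling (and permuting the $\alpha_j$ accordingly) I will assume $E_2\precsim E_1$ and $E_3\precsim E_1$. The hypothesis $E_1\precsim I-E_1=E_2+E_3$ will then be used to place copies of $E_2$ and $E_3$ inside $E_1$ that jointly cover $E_1$.

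\textbf{Similarity.} For any $V_{12},V_{13},A,B$ as in \Cref{prop upper-triangular}, the operator $T$ of \eqref{eq-T-matrix} is similar to $D=\sum_j\alpha_jE_j$. I will check this directly with
\begin{equation*}
    X=I+(\alpha_2-\alpha_1)^{-1}AV_{12}+(\alpha_3-\alpha_1)^{-1}BV_{13}.
\end{equation*}
Here $N=X-I$ satisfies $N^2=0$, so $X$ is invertible with $X^{-1}=I-N$, and a short computation gives $XDX^{-1}=T$. Hence it suffices to produce data satisfying conditions (2)--(5) of \Cref{prop upper-triangular}.

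\textbf{Choosing the projections.} Let $\mathcal{N}=E_1\mathcal{M}E_1$. The identity of $W_0^*(A,B)$ is $F_1\vee G_1$, so condition (5) forces $F_1\vee G_1=E_1$. I will choose $F_1\sim E_2$ and $G_1\sim E_3$ in $\mathcal{N}$ with $F_1\vee G_1=E_1$ and $F_1\wedge G_1\neq 0$.
\begin{itemize}
    \item In the finite case this is trace bookkeeping using $\tau(E_2)+\tau(E_3)\geqslant\tau(E_1)$. When the inequality is an equality, one may instead take $F_1=E_1$ if $E_2\sim E_1$. The degenerate case $F_1\wedge G_1=0$ with $F_1+G_1=E_1$ will need a separate small perturbation.
    \item In the infinite cases there is ample room to arrange both conditions.
\end{itemize}
The partial isometries $V_{12},V_{13}$ then come from $E_2\sim F_1$ and $E_3\sim G_1$.

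The point of $F_1\wedge G_1=E\neq 0$ is the following. If $W$ is a von Neumann subalgebra of $\mathcal{N}$ with $F_1WF_1\subseteq W$ and $G_1WG_1\subseteq W$, then for $p\leqslant F_1$ and $q\leqslant G_1$ the products $pWE\cdot EWq$ generate $pWq$, because $E$ has full central support. Consequently $W$ contains all corners and equals the relevant factor.

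\textbf{Choosing $A$ and $B$ (expected main obstacle).} Each of $A$ and $B$ is a single positive invertible operator, not a whole corner, and $W_0^*(A,B)$ must have trivial relative commutant in $\mathcal{N}$.
\begin{itemize}
    \item When $\mathcal{N}$ is not of type $\mathrm{II}_1$, or is of type $\mathrm{II}_1$ with $\mathcal{G}=0$, I will work inside $\mathcal{N}$ itself.
    \item For a general type $\mathrm{II}_1$ factor, I will first pass to an irreducible hyperfinite subfactor $\mathcal{R}\subseteq\mathcal{N}$ via \cite[Corollary 4.1]{Pop81}, which has $\mathcal{G}(\mathcal{R})=0$, and choose $F_1,G_1$ inside $\mathcal{R}$. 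Irreducibility of $W_0^*(A,B)$ in $\mathcal{R}$ then passes to $\mathcal{N}$.
\end{itemize}
My first attempt is to take $A=F_1+\varepsilon A'$ and $B=G_1+\varepsilon B'$ with small positive $A',B'$, which keeps $A,B$ invertible in their corners. Then I would use \Cref{thm masa} and \Cref{prop U*PU-Pj}, applied to the partition of the identity determined by $E$, $F_1-E$, $G_1-E$, to choose $F_1,G_1$ in relative position so that the spectral projections of $A$ and $B$, together with $F_1$ and $G_1$, generate the ambient factor.

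The delicate step is to make $A'$ and $B'$ single positive operators whose spectral projections (countable spectrum, as in \Cref{rem FT}) encode the generating projections supplied by \Cref{prop U*PU-Pj} inside $F_1$ and inside $G_1$ respectively. This must be done without losing the corner-filling argument above. Once that is achieved, \Cref{prop upper-triangular} yields irreducibility of $T$, and the similarity $T=XDX^{-1}$ completes the proof.
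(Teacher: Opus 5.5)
There is a genuine gap, and it is the step you flag yourself. Several pieces are sound: the reduction to $E_2,E_3\precsim E_1\precsim E_2+E_3$, the explicit similarity $X=I+N$ with $N^2=0$, the observation that condition (5) of \Cref{prop upper-triangular} forces $F_1\vee G_1=E_1$, and the use of a Popa irreducible hyperfinite subfactor in the type $\mathrm{II}_1$ case. But you never produce $A$ and $B$ with $W_0^*(A,B)$ irreducible in $E_1\mathcal{M}E_1$, and the scaffolding you build for this fails.

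Your corner-filling mechanism needs $F_1\wedge G_1\neq 0$, which cannot always be arranged. Suppose $\mathcal{M}$ is finite and $\tau(E_1)=\tau(E_2)+\tau(E_3)$, e.g.\ $M_4(\mathbb{C})$ with ranks $2,1,1$, or traces $\frac12,\frac14,\frac14$ in a type $\mathrm{II}_1$ factor. Kaplansky's formula $F_1\vee G_1-F_1\sim G_1-F_1\wedge G_1$, together with $F_1\sim E_2$, $G_1\sim E_3$ and $F_1\vee G_1=E_1$, gives $\tau(F_1\wedge G_1)=0$. No perturbation changes a trace identity. Your fallback $F_1=E_1$ is also unavailable here, since $E_3\neq 0$ forces $E_2\prec E_1$.

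Besides, corner-filling only applies once $W$ is known to contain the full corners $F_1\mathcal{N}F_1$ and $G_1\mathcal{N}G_1$. Since $W_0^*(A)$ and $W_0^*(B)$ are abelian, it does not touch the real difficulty. Finally, irreducibility of $W_0^*(A,B)$ inside $\mathcal{R}$ does not by itself pass to $\mathcal{N}$. What passes up is the equality $W_0^*(A,B)=\mathcal{R}$.

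The missing idea is that the single-operator issue disappears if $A$ and $B$ are made to generate whole abelian algebras. An abelian von Neumann algebra with separable predual is generated by one operator that is positive and invertible in the relevant corner. The paper proceeds as follows:
\begin{enumerate}
\item[(a)] Take a single masa $\mathcal{A}$ of $E_1\mathcal{M}E_1$. In the type $\mathrm{II}_1$ case use a masa of $\mathcal{R}$ instead; in the $\mathrm{I}_\infty$ and $\mathrm{II}_\infty$ cases choose $\mathcal{A}$ to contain finite projections summing to $E_1$.
\item[(b)] Choose $F_1\sim E_2$ and $G_1'\sim E_3$ in $\mathcal{A}$ with $E_1\leqslant F_1+G_1'$.
\item[(c)] Pick a nonzero $F_0\in\mathcal{A}G_1'$ with $E_1-F_1\leqslant F_0\precsim E_1-F_0$. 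For instance $F_0=E_1-F_1$ when this is nonzero, since $E_1-F_1\leqslant G_1'\precsim F_1$.
\item[(d)] Apply \Cref{thm masa} to get a unitary $U$ with $E_1\mathcal{M}E_1=W_0^*(U^*F_0U,\mathcal{A}(E_1-F_0))$.
\item[(e)] Set $G_1=U^*G_1'U$, let $A$ generate $\mathcal{A}F_1$, and let $B$ generate $U^*\mathcal{A}G_1'U$.
\end{enumerate}
Because $\mathcal{A}(E_1-F_0)\subseteq\mathcal{A}F_1$ and $U^*F_0U\in U^*\mathcal{A}G_1'U$, this gives $W_0^*(A,B)=E_1\mathcal{M}E_1$ (resp.\ $\mathcal{R}$). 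The argument works uniformly, including when $F_1\wedge G_1=0$.

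The crucial point is that $G_1'$ is rotated by the same unitary as $F_0$. This keeps the rotated generator under $G_1\sim E_3$. Your proposed use of \Cref{prop U*PU-Pj} on the partition $E$, $F_1-E$, $G_1-E$ does not check its hypotheses $P_j\precsim P_0\precsim I-P_0$. It also does not say where the rotated projection sits relative to $G_1$.
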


\begin{proof}
    Without loss of generality, we may assume that
    \begin{equation*}
        E_3\precsim E_2\precsim E_1 \quad \text{and} \quad  E_1\precsim E_2+E_3.
    \end{equation*}
    We consider two cases in the proof.
    
    \noindent\textbf{Case I.}
    Suppose that $\mathcal{M}$ is not of type $\mathrm{II}_1$.
    If $\mathcal{M}$ is of type $\mathrm{I}_n$ or type $\mathrm{III}$, then let $\mathcal{A}$ be an arbitrary maximal abelian self-adjoint subalgebra of $E_1\mathcal{M}E_1$.
    If $\mathcal{M}$ is of type $\mathrm{I}_\infty$ or type $\mathrm{II}_\infty$, then there exists a sequence of finite projections $\{P_n\}_{n=1}^{\infty}$ in $E_1\mathcal{M}E_1$ with $E_1=\sum_{n=1}^{\infty}P_n$.
    Clearly, $\{P_n\}_{n=1}^{\infty}$ is contained in a maximal abelian self-adjoint subalgebra $\mathcal{A}$ of $E_1\mathcal{M}E_1$.
    By \eqref{equ masa-In} and \Cref{lem masa-semifinite}, there are projections $F_1$ and $G_1'$ in $\mathcal{A}$ such that
    \begin{equation}\label{equ G1'}
        E_2\sim F_1\leqslant E_1,\quad E_3\sim G_1'\leqslant E_1,\quad\text{and}\quad E_1\leqslant F_1+G_1'.
    \end{equation}
    We claim that there exists a unitary operator $U$ in $E_1\mathcal{M}E_1$ such that
    \begin{equation}\label{equ U-G1'}
        E_1\mathcal{M}E_1=W_0^*(\mathcal{A}F_1,U^*\mathcal{A}G_1'U).
    \end{equation}
    The above claim is obvious in the case that $E_1\mathcal{M}E_1\cong\mathbb{C}$.
    In the following, we assume that $\dim E_1\mathcal{M}E_1>1$.
    If $E_1-F_1=0$, then let $F_0$ be a nonzero projection in $\mathcal{A}G_1'$ such that $F_0\precsim E_1-F_0$.
    If $E_1-F_1\ne 0$, then $E_1-F_1\leqslant G_1'\precsim F_1$ by \eqref{equ G1'} and we take $F_0=E_1-F_1$.
    In both cases, $F_0$ be a nonzero projection in $\mathcal{A}G_1'$ such that
    \begin{equation*}
        E_1-F_1\leqslant F_0\precsim E_1-F_0.
    \end{equation*}
    By \Cref{thm masa}, there exists a unitary operator $U$ in $E_1\mathcal{M}E_1$ such that
    \begin{equation*}
        E_1 \mathcal{M}E_1 = W_0^*(U^*F_0U,\mathcal{A}(E_1-F_0)).
    \end{equation*}
    This proves \eqref{equ U-G1'} as $U^*F_0U\in U^*\mathcal{A}G_1'U$ and $\mathcal{A}(E_1-F_0)\subseteq\mathcal{A}F_1$.
    Let $A$ be a positive invertible operator in $F_1\mathcal{M}F_1$ such that $W_0^*(A)=\mathcal{A}F_1$ and $B$ a positive invertible operator in $G_1\mathcal{M}G_1$ such that $W_0^*(B)=U^*\mathcal{A}G_1'U$, where $G_1=U^*G_1'U$.
    It is clear that $E_1\mathcal{M}E_1=W_0^*(A,B)$.
    Since $E_2\sim F_1$ and $E_3\sim G_1'\sim G_1$, there are partial isometries $V_{12}$ and $V_{13}$ satisfying the condition \eqref{item-ir-2} in \Cref{prop upper-triangular}.
    We complete the proof of \textbf{Case I} by \Cref{prop upper-triangular}.
    
    \noindent\textbf{Case II.}
    Suppose that $\mathcal{M}$ is of type $\mathrm{II}_1$.
    By \cite[Corollary 4.1]{Pop81}, there is an irreducible hyperfinite subfactor $\mathcal{R}$ of $E_1\mathcal{M}E_1$.
    Let $\mathcal{A}$ be an arbitrary maximal abelian self-adjoint subalgebra of $\mathcal{R}$.
    It follows from \eqref{equ masa-II1} that there are projections $F_1$ and $G_1'$ in $\mathcal{A}$ such that
    \begin{equation*}
        E_2\sim F_1\leqslant E_1,\quad E_3\sim G_1'\leqslant E_1,\quad\text{and}\quad E_1\leqslant F_1+G_1'.
    \end{equation*}
    Note that $\mathcal{G}(\mathcal{R})=0$ by \cite[Theorem 3.4]{Shen09}.
    Similar to \eqref{equ U-G1'}, by applying \Cref{thm masa}, there exists a unitary operator $U$ in $\mathcal{R}$ such that
    \begin{equation*}
        \mathcal{R}= W_0^*(\mathcal{A}F_1,U^*\mathcal{A}G_1'U).
    \end{equation*}
    The remaining part follows from the proof of \textbf{Case I}.
\end{proof}

We are now ready to prove the main result of this paper.

\begin{proof}[Proof of \Cref{thm similarity}]
    By \Cref{lem main-pre}, we only need to prove the ``if'' part.
    Let $\mathcal{A}$ be the abelian von Neumann algebra generated by $N$.
    Then $\operatorname{dim}\mathcal{A}>2$ as $\{I,N,N^2\}$ is a linearly independent subset of $\mathcal{A}$.
    If $P$ is a minimal projection in $\mathcal{A}$, then there exists a scalar $\lambda$ such that $P$ is the spectral projection of $N$ with respect to the singleton $\{\lambda\}$, i.e., $I-P=R(\lambda I-N)$.
    It follows that $P\precsim I-P$.
    Therefore, the two conditions on $N$ in \Cref{thm similarity} ensure that $\mathcal{A}$ satisfies the conditions in \Cref{lem P1-P2-P3}.
    
    Let $\{\alpha_j\}_{j=1}^{3}$ be distinct scalars and $\{P_j\}_{j=1}^{3}$ projections in $\mathcal{A}$ as obtained in \Cref{lem P1-P2-P3}.
    Since $N$ is normal, we have $\mathcal{A}^c=\{N\}^c$ by Fuglede's theorem.
    It follows that $\sum_{j=1}^{3}\alpha_jP_j\in\mathcal{A}\subseteq\mathcal{A}^{cc}=\{N\}^{cc}$.
    By \Cref{cor upper-triangular}, $\sum_{j=1}^{3}\alpha_jP_j$ is similar to an irreducible operator in $\mathcal{M}$.
    Therefore, $N$ is similar to an irreducible operator in $\mathcal{M}$ by \Cref{lem bicommutant}.
\end{proof}

Recall that every spectral operator $T$ in $\mathcal{B}(\mathcal{H})$ can be written as $T=S+K$, where the \emph{scalar part} $S$ is similar to a normal operator and the \emph{radical part} $K$ is a quasinilpotent operator commuting with $S$ by \cite{Dun54}.
Similarly, every spectral operator in a von Neumann algebra is also similar to the sum of a normal operator and a commuting quasinilpotent operator by \cite[Section 3]{DK21}.
We present a direct application based on \Cref{thm similarity}.

\begin{corollary}\label{cor spectral-operator}
    Let $\mathcal{M}$ be a separable factor with $\operatorname{dim}\mathcal{M}>4$ and $T$ a spectral operator in $\mathcal{M}$ whose scalar part $S$ satisfies the conditions
    \begin{enumerate}  [label= $(\arabic*)$, ref= \arabic*, leftmargin=*] 
    \item $R(\lambda I-S)\succsim I-R(\lambda I-S)$ for every scalar $\lambda$ in $\mathbb{C}$;
    \item the set $\{I,S,S^2\}$ is linearly independent over $\mathbb{C}$.
    \end{enumerate}
    Then $T$ satisfies the same conditions and is similar to an irreducible operator in $\mathcal{M}$.
\end{corollary}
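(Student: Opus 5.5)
The plan is to reduce everything to a normal operator, using the structure theory of spectral operators together with \Cref{lem bicommutant}. I would then read off the two conditions for $T$ from \Cref{lem main-pre}.

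\textbf{Step 1 (reduction).} By \cite[Section 3]{DK21}, there is an invertible $X\in\mathcal{M}$ such that $XTX^{-1}=N+K$, where $N$ is normal and $K$ is quasinilpotent with $NK=KN$. By uniqueness of the Dunford decomposition, the scalar part of $XTX^{-1}$ is $XSX^{-1}$, so $XSX^{-1}=N$.

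Both hypotheses on $S$ pass to $N$. Condition (2) is plainly similarity invariant. For condition (1), \Cref{lem RP-similarity} shows that $R(\lambda I-N)=R(X(\lambda I-S)X^{-1})$ is unitarily equivalent to $R(\lambda I-S)$, so the comparison $R\succsim I-R$ is preserved. Since similarity preserves both the conclusion and the conditions, we may replace $T$ by $N+K$.

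\textbf{Step 2 (a good element of the bicommutant).} As in the proof of \Cref{thm similarity}, apply \Cref{lem P1-P2-P3} to $\mathcal{A}=W^*(N)$. This gives nonzero projections $P_1,P_2,P_3\in\mathcal{A}$ with $I=\sum_j P_j$ and $P_j\precsim I-P_j$, and these may be chosen as spectral projections $E_N(\delta_j)$ of $N$ for Borel sets $\delta_j$.

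The key point is that the spectral resolution of $N$ is exactly Dunford's resolution of the identity $E(\cdot)$ of the spectral operator $N+K$. By Dunford's theorem \cite{Dun54}, each $E(\delta)$ commutes with every operator commuting with $N+K$; this also holds inside $\mathcal{M}$, since the relevant commutants are intersected with $\mathcal{M}$. Hence each $P_j$ lies in $\{N+K\}^{cc}$, and so does $D=\sum_j\alpha_jP_j$ for distinct scalars $\alpha_j$.

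\textbf{Step 3 (irreducibility).} By \Cref{cor upper-triangular}, $D$ is similar to an irreducible operator, so $D$ is not strongly reducible. If $N+K$ were strongly reducible, \Cref{lem bicommutant} would make every element of $\{N+K\}^{cc}$ strongly reducible, including $D$, which is a contradiction. So some $Y(N+K)Y^{-1}$ is irreducible, and $T$ is similar to an irreducible operator.

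\textbf{Step 4 (conditions on $T$).} Suppose $T$ failed either condition. If condition (1) fails for some $\lambda$, then by comparability of projections in a factor $R(\lambda I-T)\prec I-R(\lambda I-T)$. If condition (2) fails, $\{I,T,T^2\}$ is linearly dependent. In either case \Cref{lem main-pre} (using $\dim\mathcal{M}>4$) makes $T$ strongly reducible, contradicting Step 3. Hence $T$ satisfies both conditions.

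The main obstacle is Step 2. One must justify, inside the factor $\mathcal{M}$, that the spectral projections of the scalar part $N$ lie in the relative bicommutant of $N+K$. This means identifying them with Dunford's resolution of the identity and invoking its commutation property. The route I would try first is to cite the version of Dunford's theorem proved in \cite{DK21}, or to argue directly that any $Z$ commuting with $N+K$ preserves the spectral subspaces $\{x:\sigma(x)\subseteq\bar\delta\}$ of $N+K$, which coincide with the ranges of $E(\delta)$.
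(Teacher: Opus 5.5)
Your proof is correct, and its skeleton matches the paper's: reduce to $T=N+K$ via \cite{DK21}, place spectral data of $N$ in $\{T\}^{cc}$, and conclude with \Cref{lem bicommutant}. The differences lie in how two steps are carried out.

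\textbf{The inclusion $\{T\}^c\subseteq\{N\}^c$.} You cite Dunford's commutation theorem on $\mathcal{H}$. This is legitimate once you check three things: via Fuglede, $K$ commutes with the spectral measure of $N$; $\sigma\bigl((N+K)|_{E(\delta)\mathcal{H}}\bigr)\subseteq\overline{\delta}$, so this spectral measure is Dunford's resolution of the identity for $N+K$; and relative commutants in $\mathcal{M}$ sit inside commutants in $\mathcal{B}(\mathcal{H})$. The paper instead gives a self-contained proof inside $\mathcal{M}$, applying Rosenblum's theorem to $TE(\sigma_1)$ and $TE(\sigma_2)$ for disjoint closed sets $\sigma_1,\sigma_2$. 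You also use $D=\sum_j\alpha_jP_j$ where the paper uses $N$ itself together with \Cref{thm similarity}; that is only an unrolling of that theorem's proof.

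\textbf{The two conditions on $T$.} This is the more substantive difference. The paper verifies them directly: linear independence of $\{I,T,T^2\}$ follows because $aI+bN+cN^2$ would be normal and quasinilpotent, and condition (1) follows from the inclusion $R(\lambda I-N)\leqslant R(\lambda I-T)$. You get them for free. Once $T$ is similar to an irreducible operator it is not strongly reducible, so comparability in a factor and the contrapositive of \Cref{lem main-pre} (where $\dim\mathcal{M}>4$ enters) force both conditions. This also works directly for the original $T$, so transferring the conditions through the similarity is not even needed.

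\textbf{What each route buys.} Yours is shorter and shows that, in such a factor, the first half of the conclusion follows formally from the second for any operator. The paper's computation yields the finer fact $R(\lambda I-N)\leqslant R(\lambda I-T)$, and it is independent of the irreducibility machinery and of the hypothesis $\dim\mathcal{M}>4$.
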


\begin{proof}
    By \cite[Theorem 3.5]{DK21}, there exists an invertible operator $X$ in $\mathcal{M}$ such that $XSX^{-1}$ is a normal operator.
    There is no loss of generality in assuming that $S$ is normal by considering $XTX^{-1}$.
    Therefore, we can write $T=N+K$, where $N$ is a normal operator in $\mathcal{M}$ and $K$ is a quasinilpotent operator in $\mathcal{M}$ such that $NK=KN$ and the scalar part $N$ satisfies that
    \begin{enumerate}  [label= $(\arabic*)$, ref= \arabic*, leftmargin=*] 
        \item $R(\lambda I-N)\succsim I-R(\lambda I-N)$ for every scalar $\lambda$ in $\mathbb{C}$;
        \item the set $\{I,N,N^2\}$ is linearly independent over $\mathbb{C}$.
    \end{enumerate}
    If $aI+bT+cT^2=0$ for some scalars $a$, $b$, and $c$, then
    \begin{equation*}
        aI+bN+cN^2=-(bI+2cN+cK)K.
    \end{equation*}
    Hence $aI+bN+cN^2$ is normal and quasinilpotent.
    It follows that $aI+bN+cN^2=0$.
    Since $\{I,N,N^2\}$ is linearly independent over $\mathbb{C}$, we have $a=b=c=0$.
    Thus, the set $\{I,T,T^2\}$ is linearly independent.
    For every $n\geqslant 1$, let $E_n$ be the spectral projection of $N$ with respect to $\{z\in\mathbb{C}\colon|z|\geqslant\frac{1}{n}\}$.
    Then $N^{-1}E_n$ is a bounded operator.
    Since $E_n+N^{-1}E_nK$ is an invertible operator in $E_n\mathcal{M}E_n$, we have
    \begin{equation*}
        R(NE_n)=R(NE_n(E_n+N^{-1}E_nK))=R(TE_n)\leqslant R(T).
    \end{equation*}
    Then $R(N)\leqslant R(T)$.
    In general, we have $R(\lambda I-N)\leqslant R(\lambda I-T)$ for every scalar $\lambda$.
    It follows that
    \begin{equation*}
        R(\lambda I-T)\geqslant R(\lambda I-N)\succsim I-R(\lambda I-N)
        \geqslant I-R(\lambda I-T).
    \end{equation*}
    Therefore, the operator $T$ satisfies the two conditions in \Cref{thm similarity}.

    We provide a brief proof of the inclusion $\{T\}^{c}\subseteq\{N\}^{c}$ for completeness, which is essentially proved in \cite[Theorem 5]{Dun54}.
    Let $A\in\{T\}^c$.
    For every Borel subset $\sigma$ of $\mathbb{C}$, denote by $E(\sigma)$ the spectral projection of $N$ with respect to $\sigma$.
    Since $K$ commutes with $N$ and is quasinilpotent, we have
    \begin{equation*}
        \sigma_{E(\sigma)\mathcal{M}E(\sigma)}(TE(\sigma))
        =\sigma_{E(\sigma)\mathcal{M}E(\sigma)}(NE(\sigma))\subseteq\sigma.
    \end{equation*}
    Suppose that $\sigma_1$ and $\sigma_2$ are disjoint closed subsets of $\mathbb{C}$.
    It is clear that
    \begin{equation*}
        \sigma_{E(\sigma_1)\mathcal{M}E(\sigma_1)}(TE(\sigma_1))\cap
        \sigma_{E(\sigma_2)\mathcal{M}E(\sigma_2)}(TE(\sigma_2))=\emptyset.
    \end{equation*}
    Since $T$ commutes with $E(\sigma_j)$ and $A$, we have the following Rosenblum equation
    \begin{equation*}
        TE(\sigma_1)\cdot E(\sigma_1)AE(\sigma_2)
        =E(\sigma_1)AE(\sigma_2)\cdot TE(\sigma_2).
    \end{equation*}
    It follows that $E(\sigma_1)AE(\sigma_2)=0$ by \cite{Ros56} (see also \cite[Lemma 2.4]{MSSW}).
    Thus, we have
    \begin{equation*}
        E(\sigma_1)A(I-E(\sigma_1))=0\quad\text{and}\quad (I-E(\sigma_2))AE(\sigma_2)=0.
    \end{equation*}
    Consequently, $AE(\sigma)=E(\sigma)A$ for every closed subset of $\mathbb{C}$.
    Therefore, $A$ commutes with all spectral projections of $N$, and hence $A\in\{N\}^c$.
    It follows that $\{T\}^{c}\subseteq\{N\}^{c}$.
    Thus, $N\in\{N\}^{cc}\subseteq\{T\}^{cc}$.
    Note that $N$ is similar to an irreducible operator by \Cref{thm similarity}.
    Therefore, we complete the proof by \Cref{lem bicommutant}.
\end{proof}

We end the paper with the following result (see \cite[Proposition 2.18]{JW98-book}).

\begin{proposition}\label{prop commutant-abelian}
    Let $\mathcal{M}$ be a separable factor.
    If $T$ is an operator in $\mathcal{M}$ such that $\{T\}^c$ is abelian, then $T$ is similar to an irreducible operator in $\mathcal{M}$.
\end{proposition}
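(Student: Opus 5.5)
Since $\{T\}^c$ is abelian and $T\in\{T\}^c$, the idea is to find an irreducible operator $S$ with $T\in\{S\}^{cc}$, or more directly to conjugate $T$ so that its commutant contains no nontrivial projection. The plan runs through relative commutants rather than any spectral structure of $T$. Put $\mathcal{A}=\{T\}^c$. This is an abelian von Neumann subalgebra of $\mathcal{M}$: it is weak-operator closed and unital. Because $\{T^*\}^c=(\{T\}^c)^*$ need not equal $\mathcal{A}$, we do not take the self-adjoint part. Instead we use that the set of idempotents in $\mathcal{A}$ is commutative. If $\mathcal{A}$ contains no nontrivial idempotent, then it contains no nontrivial projection, so $T$ is already irreducible. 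In general, the commuting idempotents $\{E_\alpha\}$ of $\mathcal{A}$ generate a Boolean algebra of idempotents.

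Step 1. Show that this Boolean algebra of idempotents is bounded in norm and complete in a suitable sense. Then use the standard von Neumann-algebra version of Mackey/Wermer's theorem (a bounded commuting family of idempotents in $\mathcal{M}$ is simultaneously similar to projections via some invertible $X\in\mathcal{M}$, obtained e.g. by averaging over the group generated by $2E-I$, which is abelian hence amenable, with the averaging carried out inside $\mathcal{M}$ by weak-operator compactness). After replacing $T$ by $XTX^{-1}$, all idempotents in $\{T\}^c$ become self-adjoint projections. The boundedness is the key issue. I would get it by noting that $\mathcal{A}$ is a commutative Banach algebra and that idempotents in it correspond to clopen subsets of the maximal ideal space. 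I would then try to bound $\|E\|$ uniformly by using that $\mathcal{A}$ is weak-operator closed, hence a dual Banach algebra with separable predual, so a Baire-category argument on the family of idempotents should force a uniform bound. This is the step I expect to be the main obstacle. If the uniform bound fails, I would fall back on conjugating only a maximal chain of idempotents, via a sequence of similarities converging in $\mathcal{M}$.

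Step 2. With $T$ conjugated as in Step 1, the projections in $\{T\}^c$ form an abelian family $\{P_\alpha\}$, and each projection commuting with $T$ is central in $\mathcal{A}$. If there are none besides $0,I$, we are done. Otherwise, choose a maximal family of mutually orthogonal minimal projections $Q_n$ in $W^*(\{P_\alpha\})\subseteq\mathcal{A}$, plus a possibly diffuse remainder. Then $T=\sum_n TQ_n+T(I-\sum Q_n)$ with each $TQ_n$ irreducible in $Q_n\mathcal{M}Q_n$. Finally, mimic \Cref{prop upper-triangular}: perturb by a similarity $Y=I+\sum_{m<n}\epsilon_{mn}V_{mn}$, with small partial isometries $V_{mn}\in Q_m\mathcal{M}Q_n$ chosen so that $YTY^{-1}$ acquires off-diagonal corners linking the blocks. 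A projection commuting with $YTY^{-1}$ then has to be compatible with the block structure (since $\{YTY^{-1}\}^c=Y\mathcal{A}Y^{-1}$ is abelian). The corners then force it to be $0$ or $I$, exactly as in the argument concluding \Cref{prop upper-triangular}.

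Step 3. For the diffuse remainder, replace the minimal projections by a dyadic refinement, generated by countably many projections, and run the same linking argument. The result is that every projection commuting with the final conjugate of $T$ lies in a family of idempotents that is forced to be trivial. I would expect Step 1 to be the delicate part; Steps 2–3 are a bookkeeping extension of \Cref{prop upper-triangular}.
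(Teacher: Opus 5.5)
Your proposal has a genuine gap: Step 1 is not merely delicate, its key claim is false, and Steps 2--3 depend on it.

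\textbf{Step 1 fails, already in $\mathcal{B}(\mathcal{H})$.} The Boolean algebra of idempotents in $\{T\}^c$ need not be bounded, even in the type $\mathrm{I}_\infty$ factor $\mathcal{B}(\mathcal{H})$. So the route ``make every idempotent of $\{T\}^c$ self-adjoint by one similarity'' cannot work.

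Take a conditional Schauder basis $\{x_n\}$ of $\mathcal{H}$ with biorthogonal vectors $\{y_n\}$. Let $S_n$ be its partial-sum idempotents, which are uniformly bounded. For finite $\sigma\subseteq\mathbb{N}$, let $E_\sigma x=\sum_{n\in\sigma}\langle x,y_n\rangle x_n$ be the coordinate idempotents; conditionality means exactly that $\sup_\sigma\|E_\sigma\|=\infty$.

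Then $T=\sum_{n\geqslant 1}2^{-n-1}S_n$ converges in norm. It satisfies $Tx_k=2^{-k}x_k$ and $\langle Tx,y_m\rangle=2^{-m}\langle x,y_m\rangle$, so $\ker(T-2^{-k}I)=\mathbb{C}x_k$. Hence every operator commuting with $T$ is diagonal with respect to $\{x_k\}$. So $\{T\}^c$ is abelian, and it contains every $E_\sigma$.

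If an invertible $X$ made all $XE_\sigma X^{-1}$ self-adjoint, then $\|E_\sigma\|\leqslant\|X\|\,\|X^{-1}\|$ for all $\sigma$, a contradiction. So no Baire-category argument can produce the bound. The ``maximal chain'' fallback fails as well. In this example even the bounded chain $\{S_n\}$ cannot be made self-adjoint by a single similarity, including a limit of similarities. If it could, every $E_\sigma$, being a finite sum of differences $S_n-S_{n-1}$, would become a projection too.

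\textbf{Consequences for Steps 2--3.} For a projection $Q$ commuting with $YTY^{-1}$, you only know that $Y^{-1}QY$ is some idempotent in $\{T\}^c$. Your block analysis needs it to be a projection in $\mathcal{N}=W^*(T)'\cap\mathcal{M}$, which is exactly what Step 1 was meant to supply. The appeal to \Cref{prop upper-triangular} also does not transfer. That argument uses distinct scalar diagonal blocks, and the fact that $W_0^*(A,B)$ is an irreducible subfactor, which came from \Cref{thm masa}. Nothing analogous is set up for general blocks $TQ_n$ or for the diffuse remainder.

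\textbf{The missing idea.} What you need is \Cref{lem bicommutant}, used in the right direction; your opening sentence reverses it. If $A\in\{T\}^{cc}$ and $XAX^{-1}$ is irreducible, then $XTX^{-1}$ is irreducible. Indeed, a projection $Q$ commuting with $XTX^{-1}$ gives an idempotent $X^{-1}QX\in\{T\}^c$. That idempotent commutes with $A$, so $Q$ commutes with $XAX^{-1}$ and is trivial. This handles all idempotents of $\{T\}^c$ at once, with no boundedness needed.

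Since $\{T\}^c$ is abelian and contains $T$, one has $\{T\}^{cc}=\{T\}^c\supseteq\mathcal{N}$. So it suffices to find one such $A$ there, and the paper does this in two cases.

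\begin{enumerate}
\item[$(1)$] Some minimal projection $P$ of $\mathcal{N}$ satisfies $P\succsim I-P$. Then $TP$ is irreducible in $P\mathcal{M}P$. Take $A=TP+\lambda P$ with $\lambda=\|T\|+1$. It is similar to $TP+V+\lambda P$, where $VV^*=I-P$ and $V^*V\leqslant P$, and this operator is irreducible by \cite[Lemma 2.5]{MSSW}.
\item[$(2)$] Otherwise $\dim\mathcal{N}>2$, and \Cref{lem P1-P2-P3} yields $P_1,P_2,P_3\in\mathcal{N}$ with $P_j\precsim I-P_j$. Then $A=\sum_j\alpha_jP_j$, with distinct $\alpha_j$, is similar to an irreducible operator by \Cref{cor upper-triangular}.
\end{enumerate}
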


\begin{proof}
    Since $\{T\}^c$ is abelian, we have $\{T\}^c=\{T\}^{cc}$.
    Let $\mathcal{N}=W^*(T)'\cap\mathcal{M}$.
    Suppose that there exists a minimal projection $P$ in $\mathcal{N}$ such that $P\succsim I-P$.
    Then $TP$ is irreducible in $P\mathcal{M}P$.
    Let $V$ be a partial isometry in $\mathcal{M}$ such that $VV^*=I-P$ and $V^*V\leqslant P$.
    Let $\lambda=\|T\|+1$.
    It is straightforward to verify that $TP+\lambda P$ is an operator in $\{T\}^c=\{T\}^{cc}$ and is similar to $TP+V+\lambda P$, which is irreducible in $\mathcal{M}$ by \cite[Lemma 2.5]{MSSW}.
    We complete the proof by \Cref{lem bicommutant}
    
    Suppose that $P\prec I-P$ for every minimal projection $P$ in $\mathcal{N}$.
    Let $\mathcal{A}$ be a maximal abelian self-adjoint subalgebra of $\mathcal{N}$.
    Since every minimal projection $P$ in $\mathcal{A}$ is a minimal projection in $\mathcal{N}$, we have $P\prec I-P$.
    It follows that $\dim\mathcal{A}>2$.
    Let $\{\alpha_j\}_{j=1}^{3}$ be distinct scalars and $\{P_j\}_{j=1}^{3}$ projections in $\mathcal{A}$ as obtained in \Cref{lem P1-P2-P3}.
    Then $\sum_{j=1}^{3}\alpha_jP_j$ is similar to an irreducible operator by \Cref{cor upper-triangular}.
    Note that
    \begin{equation*}
        \sum_{j=1}^{3}\alpha_jP_j\in\mathcal{A}
        \subseteq\mathcal{N}\subseteq\{T\}^c=\{T\}^{cc}.
    \end{equation*}
    We complete the proof by \Cref{lem bicommutant}.
\end{proof}

\begin{example}\label{eg commutant-abelian}
    It is well known that the commutant algebras of Jordan blocks in $M_n(\mathbb{C})$ and the unilateral shift operator in $\mathcal{B}(\ell^2)$ are abelian.
    By the virtue of the irrational rotation algebra, there exists an operator $T_0$ in the hyperfinite type $\mathrm{II}_1$ factor $\mathcal{R}$ with $\Vert T_0 \Vert \leqslant 2$ 
    such that $\{T_0\}'\cap\mathcal{R}$ is abelian (see the proof of Theorem 8.2 of \cite{FJLX2013}, where we take $T_0 = u+v$).
    Let
    \begin{equation*}
        T:= \bigoplus_{n=1}^{\infty}(2^{-n}I+2^{-n-2}T_0).
    \end{equation*}
    Then $T$ is an operator in the type $\mathrm{II}_\infty$ factor $\mathcal{M}=\mathcal{B}(\ell^2)\, \overline{\otimes}\, \mathcal{R}$ such that $\{T\}'\cap\mathcal{M}$ is abelian.
    When $\mathcal{M}$ is the free group factor $\mathcal{L}_{F_2}$ or a type $\mathrm{III}$ factor, it is interesting to ask whether there exists an operator $T$ in $\mathcal{M}$ such that $\{ T \}' \cap \mathcal{M}$ is abelian.
\end{example}

\section*{Declarations}
\noindent{\bf Conflict of interest}
On behalf of all authors, the corresponding author states that there is no conflict of interest.

\section*{Acknowledgments}
The authors gratefully acknowledge the anonymous referees for their insightful comments and constructive suggestions, as well as research support from Dalian University of Technology during the preparation of this manuscript.


\begin{thebibliography}{99}


\bibitem{CFY21}
X. Cao, J. Fang, Z. Yao.
{\em Strong sums of projections in type  $\mathrm{II}_1$  factors.}
J. Funct. Anal. 281 (2021), no. 5, Paper No. 109088, 11 pp.
MR4253932


\bibitem{CFY24}
X. Cao, J. Fang, Z. Yao.
{\em On finite sums of projections and Dixmier's averaging theorem for type  $\mathrm{II}_1$  factors.}
J. Funct. Anal. 287 (2024), no. 8, Paper No. 110568, 29 pp.
MR4777788


\bibitem{Dun54}
N. Dunford.
\emph{Spectral operators.}
Pacific J. Math. 4 (1954), 321--354.
MR0063563


\bibitem{DK21}
K. Dykema, A. Krishnaswamy-Usha.
\emph{Angles between Haagerup-Schultz projections and spectrality of operators.}
J. Funct. Anal. 281 (2021), no. 4, Paper No. 109027, 26 pp.
MR4249117


\bibitem{DSSW}
K. Dykema, A. Sinclair, R. Smith, S. White.
\emph{Generators of $\mathrm{II}_1$ factors.}
Oper. Matrices 2 (2008), no. 4, 555-582.
MR2468882


\bibitem{DSZ15}
K. Dykema, F. Sukochev, D. Zanin.
\emph{A decomposition theorem in $\mathrm{II}_1$ factors.}
J. Reine Angew. Math. 708 (2015), 97-114.
MR3420330

\bibitem{FJLX2013}
J. Fang, C. Jiang, H. Lin, F. Xu.
\emph{On generalized universal irrational rotation algebras and associated strongly irreducible operators}
Internat. J. Math. 24 (2013), no. 8, 1350059, 35 pp.
MR3103875

\bibitem{FT69}
P. Fillmore, D. Topping.
\emph{Sums of irreducible operators.}
Proc. Amer. Math. Soc. 20 (1969), 131--133.
MR0233226


\bibitem{FJ94}
C. Fong, C. Jiang.
\emph{Normal operators similar to irreducible operators.}
Acta Math. Sinica (N.S.) 10 (1994), no. 2, 132--135.
MR1398037


\bibitem{Ge03}
L. Ge.
\emph{On ``Problems on von Neumann algebras by R. Kadison, 1967''.}
Acta Math. Sin. (Engl. Ser.) 19 (2003), no. 3, 619--624.
MR2014042


\bibitem{Gil72}
F. Gilfeather.
\emph{Strong reducibility of operators.}
Indiana Univ. Math. J. 22 (1972/73), 393--397.
MR0303322


\bibitem{HS09}
U. Haagerup, H. Schultz.
\emph{Invariant subspaces for operators in a general $\mathrm{II}_1$ factor.}
Publ. Math. Inst. Hautes \'Etudes Sci. No. 109 (2009), 19--111.
MR2511586


\bibitem{HMS26}
D. Hadwin, M. Ma, J. Shen.
{\em Voiculescu's theorem in properly infinite factors.}
J. Funct. Anal. 290 (2026), no. 1, Paper No. 111198, 34 pp.
MR4964138


\bibitem{Hal68}
P. Halmos.
\emph{Irreducible operators.}
Michigan Math. J. 15 (1968), 215--223.
MR0231233


\bibitem{Hsin00}
C. Hsin.
\emph{Finite matrices similar to irreducible ones.}
Taiwanese J. Math. 4 (2000), no. 3, 457--477.
MR1779110


\bibitem{JW98-book}
C. Jiang, Z. Wang.
\emph{Strongly irreducible operators on Hilbert space.}
Pitman Res. Notes Math. Ser., 389,
Longman, Harlow, 1998. x+243 pp.
MR1640067


\bibitem{JW06-book}
C. Jiang, Z. Wang.
\emph{Structure of Hilbert space operators.}
World Scientific Publishing Co. Pte. Ltd., Hackensack, NJ, 2006. x+248 pp.
MR2221863


\bibitem{KR1}
R. Kadison, J. Ringrose.
\emph{Fundamentals of the theory of operator algebras, I, Elementary theory.}
Academic Press, New York, 1983.
MR0719020


\bibitem{KR2}
R. Kadison, J. Ringrose.
\emph{Fundamentals of the theory of operator algebras, II, Advanced theory.}
Academic Press, Orlando, FL, 1986.
MR0859186


\bibitem{MSSW}
M. Ma, J. Shen, R. Shi, T. Wang.
\emph{Products of irreducible operators in factors.}
arXiv:2512.12162, 2025


\bibitem{Pop81}
S. Popa.
\emph{On a problem of R. V. Kadison on maximal abelian $*$-subalgebras in factors.}
Invent. Math. 65 (1981/82), no. 2, 269--281.
MR0641131


\bibitem{Ros56}
M. Rosenblum.
\emph{On the operator equation $BX-XA=Q$.}
Duke Math. J. 23 (1956), 263--269.
MR0079235


\bibitem{Shen09}
J. Shen.
\emph{Type $\mathrm{II}_1$ factors with a single generator.}
J. Operator Theory 62 (2009), no. 2, 421--438.
MR2552089


\bibitem{SS19}
J. Shen, R. Shi.
{\em Reducible operators in non-$\Gamma$ type $\mathrm{II}_1$ factors.}
\emph{Math. Ann.} 394 (2026), no. 2, Paper No. 31, 38 pp.


\bibitem{SS-book}
A. Sinclair, R. Smith.
\emph{Finite von Neumann algebras and masas.}
London Math. Soc. Lecture Note Ser., 351 Cambridge University Press, Cambridge, 2008. x+400 pp.
MR2433341


\bibitem{SS63}
N. Suzuki, T. Saito.
\emph{On the operators which generate continuous von Neumann algebras.}
Tohoku Math. J. (2) 15 (1963), 277--280.
MR0154143


\bibitem{Wog69}
W. Wogen.
\emph{On generators for von Neumann algebras.}
Bull. Amer. Math. Soc. 75 (1969), 95--99.
MR0236725


\end{thebibliography}
\end{document}